\documentclass[11pt,a4paper,twoside]{article}
\usepackage{latexsym,amsfonts,amsmath, amsthm, amssymb}
\usepackage{fullpage}
\usepackage{xcolor,bm, bbm}
\usepackage[utf8x]{inputenc}
\usepackage{array}
\usepackage{mathrsfs}
\usepackage{tikz}
\usepackage{pgfplots}

\usepackage{fourier}

\newcommand{\Ham}{\mathbb H}
\newcommand{\R}{\mathbb R}
\newcommand{\N}{\mathbb N}
\newcommand{\C}{\mathbb C}
\newcommand{\E}{\mathbb E}
\newcommand{\Pro}{\mathbb P}
\newcommand{\dif}{\,\mathrm{d}}

\newcommand{\Var}{\mathrm{Var}}
\newcommand{\Uni}{\mathrm{Unif}}
\newcommand{\Sc}{\mathcal{S}}
\newcommand{\vol}{\mathrm{vol}}
\newcommand{\Mat}{\mathrm{Mat}}

\newcommand{\Sph}{\ensuremath{{\mathbb S}}}

\def\dint{\textup{d}}
\newcommand{\SSS}{\ensuremath{{\mathbb S}}}

\newcommand{\B}{\ensuremath{{\mathbb B}}}


\newcommand{\eqdistr}{\stackrel{d}{=}}
\newcommand{\toweak}{\overset{w}{\underset{n\to\infty}\longrightarrow}}


\DeclareMathOperator{\id}{Id}

\DeclareMathOperator{\Tr}{Tr}
\DeclareMathOperator{\rate}{\mathbb I}
\DeclareMathOperator{\ratej}{\mathbb J}

\renewcommand{\Re}{\operatorname{Re}}  


\newtheorem{thm}{Theorem}[section]

\newtheorem{cor}[thm]{Corollary}
\newtheorem{lemma}[thm]{Lemma}
\newtheorem{df}[thm]{Definition}
\newtheorem{proposition}[thm]{Proposition}

\newtheorem{thmalpha}{Theorem}

{
\theoremstyle{definition}

\newtheorem{rmk}[thm]{Remark}

}


\allowdisplaybreaks
\setlength{\parindent}{0pt}

\begin{document}

\title{\bf The large and moderate deviations approach in \\geometric functional analysis} 

\medskip

\author{Joscha Prochno}



\date{}

\maketitle

\begin{abstract}
\small
The work of Gantert, Kim, and Ramanan [Large deviations for random projections of {$\ell^p$} balls, Ann. Probab. 45 (6B), 2017] has initiated and inspired a new direction of research in the asymptotic theory of geometric functional analysis. The moderate deviations perspective, describing the asymptotic behavior between the scale of a central limit theorem and a large deviations principle, was later added by Kabluchko, Prochno, and Th\"ale in [High-dimensional limit theorems for random vectors in $\ell_p^n$ balls.~II, Commun. Contemp. Math. 23(3), 2021]. These two approaches nicely complement the classical study of central limit phenomena or non-asymptotic concentration bounds for high-dimensional random geometric quantities. Beyond studying large and moderate deviations principles for random geometric quantities that appear in geometric functional analysis, other ideas emerged from the theory of large deviations and the closely related field of statistical mechanics, and have provided new insight and become a source for new developments. Within less than a decade, a variety of results have appeared and formed this direction of research. Among others, a connection to the famous Kannan--Lov\'asz--Simonovits conjecture and the study of moderate and large deviations for isotropic log-concave random vectors was discovered. In this manuscript, we introduce the basic principles, survey the work that has been done, and aim to manifest this direction of research, at the same time making it more accessible to a wider community of researchers.   
\medspace
\vskip 1mm
\noindent{\bf Keywords}. {Cram\'er's theorem, Gibbs measure, moderate deviations principle, large deviations principle, Orlicz ball, random projection, Sanov's theorem, Schatten class}\\
{\bf MSC}. Primary 46B20, 60F10; Secondary 47B10, 52A23 
\end{abstract}

\tableofcontents

\section{Introduction}

Already the early years in the local theory of Banach spaces and geometric functional analysis have demonstrated a deep connection between the geometry of high-dimensional normed spaces, and thus of high-dimensional convex bodies, and probability theory. This fruitful interplay has already been nicely displayed in the wonderful and influential monographs \emph{Asymptotic theory of finite-dimen-sional normed spaces} by V.~D.~Milman and G.~Schechtman \cite{MS1986}, \emph{The volume of convex bodies and Banach space geometry} by G.~ Pisier \cite{P1989}, and \emph{Probability in Banach Spaces} by M.~Ledoux and M.~Talagrand \cite{LT1991}, and has recently taken a new direction as nicely presented in \cite{V2018}; we do not even make the attempt here to list the numerous influential and groundbreaking papers in this direction. Even though high-dimensional structures and randomness seem inevitably to lead to the existence of more or too many possibilities, resulting in an increased diversity and complexity as the dimension increases, the past decades have shown that this is often compensated and even patterns are created and concentration phenomena evoked, its source being convexity. Along the way, powerful methods have been developed by a large community of gifted mathematicians, both on the geometric and analytic, as well as on the probabilistic side. In fact, new fields at the crossroads between functional analysis, discrete and convex geometry, and probability theory have emerged, today known as asymptotic geometric analysis (also asymptotic convex geometry) and high-dimensional probability theory. Due to their common origin both fields overlap in many different ways, while still leaning towards the mathematical disciplines the names already indicate. The methodical diversity, drawing on ideas from analysis, geometry, and probability, is a common characteristic for both. Today the methods have become indispensable in numerous problems of (multivariate) statistics, computer science, theoretical numerical analysis, or compressed sensing, and quite often research activities are driven by questions that arose in the applied sciences, for instance, the famous {K}annan--{L}ov\'{a}sz--{S}imonovits conjecture \cite{KLS1995}, which stems from an algorithmic question about the complexity of volume computation of a high-dimensional convex body (we will encounter this conjecture again later). We refer to the monographs \cite{AGM2015_I,AGM2021_II} and \cite{IsotropicConvexBodies}, which provide an outstanding and extensive overview of modern asymptotic geometric analysis until the year $2021$, the recent excellent introduction to high-dimensional probability with applications in data science \cite{V2018}, and the comprehensive textbook \cite{FR2013} about the mathematical aspects of compressed sensing, all of which are complemented by their nearly exhaustive lists of references. In particular, the monographs \cite{AGM2015_I,AGM2021_II,IsotropicConvexBodies} treat the classical central limit perspective, most prominently Klartag's central limit theorem for convex bodies \cite{K2007}, saying that given any isotropic convex body in high dimension its typical random projections will be approximately standard Gaussian (see \cite{ABP2003} for the origin of the problem). This is complemented by other central limit theorems such as \cite{APT2019}, \cite{JP2019}, \cite{KPT2019_I}, \cite{PPZ14}, or \cite{SS1991}, just to mention a few (please consult the respective lists of references for more in this direction). In this manuscript, we shall be concerned with other types of limit theorems going beyond the normal fluctuations covered by a central limit theorem.

\subsection{The large and moderate deviations perspective}

The beautiful universality that emerges from the central limit perspective unfortunately restricts the information that may be retrieved, for instance, from lower-dimensional projections of a high-dimensional convex body. But which aspects of, for instance, random projections can be used to distinguish between different convex bodies? 

In order to answer this question, let us recall that a sequence $(X_n)_{n\in\N}$ of random vectors in $\R^d$ satisfies a large deviations principle (LDP) with lower-semicontinuous rate function $\rate$ and speed $s_n\uparrow \infty$ if and only if for all Borel sets $A\subset \R^d$
\[
\Pro[X_n\in A] \approx e^{-s_n \inf_{x\in A}\rate(x)}.
\]
A moderate deviations principle (MDP) formally resembles an LDP but on a different scale and, typically, with significant differences in the behavior. More precisely, the scale for an MDP of a random quantity is between that of a weak limit theorem (like a central limit theorem) and that of a law of large numbers. 
It is instructive to consider the case of independent and identically distributed (iid) $\R$-valued random variables $X_i$, $i\in\N$, with finite exponential moments for which Cram\'er's theorem \cite[Theorem I.4]{dH2000} guarantees that, for all $a>\E[X_1]$,
\[
\lim_{n\to\infty} \frac{1}{n}\log \Pro\Big[\frac{1}{n}\sum_{i=1}^nX_n\geq a\Big] = -\rate(a),
\] 
with $\rate$ being the Legendre transform of the log-moment generating function of $X_1$. While this LDP occurs at scale $n$ of a law of large numbers, as seen from the expression $\frac{1}{n}\sum_{i=1}^nX_i$, an MDP occurs (under suitable conditions, e.g., \cite{EL2003}) on a scale between $\sqrt{n}$ and $n$, more precisely, $\frac{1}{b_n}\sum_{i=1}^nX_i$ with $b_n/n\to 0$ and $b_n/\sqrt{n}\to\infty$.   

In contrast to a central limit theorem, large and moderate deviations principles are known to be typically non-universal and distribution-dependent, more precisely, they are parametric in speed and/or rate function; MDPs are usually less parametric in the rate (up to a variance dependence that may appear). 
In this sense, the large and moderate deviations behavior of, for instance, a random projection of a convex body, depends in a subtle way on the geometry of the underlying body. In other words, LDPs and MDPs allow one to distinguish high-dimensional probability measures, like the uniform distributions on a high-dimensional convex body, via their lower-dimensional projections. We shall describe below the
development of this large and moderate deviations approach in the asymptotic theory of geometric functional analysis, but also the influence of large deviations (and in particular statistical mechanics) ideas and techniques, before we move on with a short introduction to large deviations theory and a more detailed presentation and discussion of some of the results presented now.  
\vskip 1mm

The contributions we have seen and the progress that has been made in the past seven years started with an inspiring work of N.~Gantert, S.~S.~Kim, and K.~Ramanan \cite{GKR2017} who studied the question about the large deviations behavior of one-dimensional marginals of (scaled) $\ell_p^d$-balls defined as $\B_p^d:=\{x\in\R^d\,:\,\|x\|_p\leq 1 \}$, $1\leq p\leq \infty$, both in an annealed (i.e., marginal in a random direction) and quenched (i.e., marginal in a fixed deterministic direction) setting. In the annealed setting, their work reveals a stark difference in the behavior both speed- and rate-wise, depending on whether $1\leq p \leq 2$ or $2<p\leq \infty$. The authors also identify a variational formula that relates the annealed and quenched LDPs.

This is where D.~Alonso-Guti\'errez, J.~Prochno, and C.Th\"ale started, looking at higher-dimensional marginals, studying LDPs for the Euclidean norm of random orthogonal $k_d$-dimensional projections of random vectors uniformly distributed on $\ell_p^d$-balls; here $k_d\in\N$ may tend to infinity as $d\to\infty$. The observations are two-fold. On the one hand, we see again a significantly different large deviations behavior, depending on whether $1\leq p\leq 2$ or $2< p\leq \infty$. On the other hand, the form of the rate function that governs the LDP depends on the asymptotic behavior of the random subspace dimensions $k_d$ relative to $d$. One should particularly notice that for $1\leq p<2$, both the speed and the rate function differ from those for the $1$-dimensional random projections in \cite{GKR2017}. Notably, when $p=2$ the LDP does not feel the dimensions $k_d$ of the random subspaces as long as they do not increase too fast with $d$ and we obtain the same LDP as in \cite{GKR2017}. The difference to the $1$-dimensional projections becomes visible only in the truly high-dimensional regime in which $k_d$ is eventually proportional to the dimension $d$ of the ambient space. When $2<p\leq \infty$ the LDP occurs at the same speed as in \cite{GKR2017}, but with a different rate function.

The case of general $\ell_q$-norms of random vectors uniformly distributed on $\B_p^d$, i.e., without involving any projections, was studied by Z.~Kabluchko, J.~Prochno, and C.~Th\"ale in \cite{KPT2019_I}. The LDPs obtained in \cite{KPT2019_I} significantly differ depending on the position of $p$ relative to $q$. It is shown that when $q\leq p$, the speed is $d$ and the rate function feels the radial part of the uniform distribution on $\B_p^d$, while a lack of exponential moments makes the analysis for $q>p$ more delicate, resulting in a slower speed $d^{p/q}$ and a non-convex rate function. In particular, the LDP in this case is not susceptible to the radial component. In the follow-up paper \cite{KPT2019_II}, the same authors studied $\ell_q$-norms of random vectors with distributions from a more general class related to $\ell_p$-balls. In particular, they considered MDPs for those random quantities in the case where $q<p$. A somehow related work, where the random points are chosen uniformly at random in a centered regular simplex in high dimensions, is \cite{BKPST2020}. There the authors obtained Berry--Esseen bounds in the regime $1\leq q <\infty$, complemented those by a non-central limit theorem together with moderate and large deviations in the case where $q=\infty$, and compared those results to the previously mentioned ones for the crosspolytope $\B_1^d$. We also refer to the work \cite{GKT2019} of J.~Grote, Z.~Kabluchko, and C.~Thäle for several types of limit theorems, including MDPs and LDPs, for high-dimensional random simplices.

The results mentioned so far are often referred to as level--1 large deviations. The level--2 large deviations or Sanov-type large deviations concern LDPs for random empirical measures. In \cite{KR2018}, S.~S.~Kim and K.~Ramanan studied conditional limit theorems for $\ell_p$-spheres in the flavor of the classical and famous Poincar\'e--Maxwell--Borel Lemma (see \cite{B1914} or \cite{DF1987}). Central to their analysis is an LDP that the authors establish for the sequence of empirical measures of the coordinates of a random vector uniformly distributed on the sphere $\SSS_p^{d-1}:=\{x\in\R^d\,:\, \|x\|_p=1 \}$ (both, random vector and sphere are scaled with a factor $d^{1/p}$). Notably, the LDP not only holds in the weak topology (on the space of probability measures), but even in the stronger $q$-Wasserstein topology as long as $q<p$. A nice combination with another element of large deviations theory, the Gibbs conditioning principle, allows them to obtain the conditional limit theorem.  

\vskip 1mm

Let us briefly stop here and explain a central tool in all the work presented so far, which will then also serve as motivation for the research discussed in a moment. In the world of $\ell_p^d$-balls, there exists an inconspicuous
probabilistic representation of the uniform measure on $\B_p^d$, relating it for any $1\leq p<\infty$ to the so-called $p$-Gaussian distribution with Lebesgue density of the form
\[
\R \ni x\mapsto C_p e^{-|x|^p/p},
\]
where $C_p\in(0,\infty)$ is a normalization constant.
More precisely, the work of G.~Schechtman and J.~Zinn \cite{SchechtmanZinn} (see also the independent work \cite{RR1991} by S.~T.~Rachev and L.~Rüschendorf) shows that if $X$ is uniformly distributed on $\B_p^d$, and $Z_1,\dots,Z_d$ are independent $p$-generalized Gaussian random variables, then 
\[
X \stackrel{\dint}{=} U^{1/d}\frac{(Z_1,\dots,Z_d)}{\|(Z_1,\dots,Z_d)\|_p},
\]
where $U$ is uniformly distributed on $[0,1]$ and independent of $Z_1,\dots,Z_d$. In view of a probabilistic approach to the geometry of $\ell_p^d$-balls, this representation is quite powerful, because it allows to make a transition from a random vector with dependent coordinates (like $X$) to one with independent coordinates (like $(Z_1,\dots,Z_d)$); this makes many problems computationally feasible. Behind any of the large and moderate deviations results for $\ell_p^d$-balls stands this Schechtman--Zinn probabilistic representation, also in \cite{GKR2017}. 

\vskip 1mm

Motivated by the work in \cite{KR2018}, Z.~Kabluchko, J.~Prochno, and C.~Th\"ale investigated Sanov-type LDPs in the non-commutative setting of Schatten classes \cite{KPT2019_sanov}, where no such Schechtman--Zinn representation had been available. We recall that the Schatten $p$-class is the space of $d\times d$ real matrices  where the norm of a matrix $A$ is given by the $\ell_p^d$-norm of the vector of singular values of $A$. The authors established a non-commutative version of the Schechtman--Zinn representation and proved an LDP for the (properly scaled) empirical spectral measure of a matrix chosen uniformly at random from a unit ball in the Schatten $p$-class or from the boundary with respect to the cone measure (on the space of probability measures equipped with the weak topology). As a consequence they also obtained that the spectral measure converges weakly almost surely to a non-random limiting measure given by the Ullman distribution. It turns out that in this non-commutative setting, the analysis is considerably more delicate. This paper was the first going beyond the setting of $\ell_p$-balls. Moreover, the ideas and results developed in \cite{KPT2019_sanov} and the related works \cite{KPT2020_volume_ratio,KPT2020_ensembles} (see also \cite{ST2023}), which largely stem from large deviations theory and statistical mechanics, where crucial in \cite{DFGZ2023} to establish a generalized version of the variance conjecture for the family of Schatten $p$-class unit balls of self-adjoint matrices.

Another work dealing at least in parts with the non-commutative setting is \cite{KT2022_matrix_p_balls}. T.~Kaufmann and C.~Thäle derive a probabilistic representation for classes of weighted $p$-radial distributions, which, in essence, are based on mixtures of a weighted cone probability measure and a weighted uniform distribution on $\B_p^d$. The authors then prove LDPs for the empirical measure of the coordinates of vectors chosen at random from $\B_p^d$ with respect to distributions from the weighted measure class. Those distributions are then extended to $p$-balls in classical matrix spaces, both for self-adjoint and non-self-adjoint matrices, and their eigenvalue and singular value distributions are identified for the respective random matrices. As an application, LDPs for the empirical spectral measures for the eigenvalues and the singular values are proved.

In \cite{KPT2019_cube}, Z.~Kabluchko, J.~Prochno, and C.~Th\"ale proved a strong law of large numbers for $k$-dimensional random projections of the $d$-dimensional cube via the Artstein-Vitale law of large numbers for random compact sets. This result, which can also be derived from the celebrated Dvoretzky--Milman theorem, says that with respect to the Hausdorff distance, a uniform random projection of the $d$-dimensional cube onto $\R^k$ converges almost surely to a centered $k$-dimensional Euclidean ball of radius $\sqrt{2/\pi}$, as $d\to\infty$. For every point inside this, the authors determined the asymptotic number of vertices and the volume of the part of the cube projected `close' to this point. This result is a consequence of a more general large deviations result for random projections of arbitrary product measures obtained in this paper.

Related to the $d$-dimensional cube is also a work of S.~Johnston, Z.~Kabluchko, and J.~Prochno, who studied the large deviations behavior of random one-dimensional projections of the uniform distribution on the cube $[-1,1]^d$ \cite{JKP2021}. While a combination of Klartag's central limit theorem for convex bodies with the Borel--Cantelli lemma implies the almost sure weak convergence to a centered Gaussian distribution of variance $1/3$, and thus shows that the typical behavior is universal, the authors establish an LDP showing that the atypical behavior is not universal. In fact, the speed of the LDP is $d$ and the rate function is explicit, thus giving direct access to the dependence on the underlying distribution.

The previous results from \cite{JKP2021} have been considerably generalized by Z.~Kabluchko and J.~Prochno in \cite{KP2024}, where LDPs for random matrices in the orthogonal group and Stiefel manifold are proved. The good convex rate functions are explicitly given in terms of certain log-determinants of trace-class operators and are finite on the set of Hilbert--Schmidt operators $M$ satisfying $\|MM^*\|<1$. Those results are then applied to determine the precise large deviation behavior of $k$-dimensional random projections of high-dimensional product distributions using an appropriate interpretation in terms of point processes, also providing a characterizing of the space of all possible deviations. The case of uniform distributions on $\B_p^d$, $1\leq p\leq \infty$, is also considered and reduced to appropriate product measures.

A work partly generalizing the one of Z.~Kabluchko and J.~Prochno \cite{KP2024} is the recent paper \cite{KST2022} by T.~Kaufmann, H.~Sambale, and C.~Thäle. Here the authors extend the set of projected distributions for which an LDP is shown (going from the uniform distribution to quite general $p$-radial distributions) and observe that the large deviations behavior obtained in \cite{KP2024} is universal for a large class of probability measures on $\B^d_p$. They also describe geometrically motivated distributions on the balls $\B^d_p$ for which the LDP needs a suitable modification.

Another work dealing with large deviations induced by the Stiefel manifold is \cite{KR2023_stiefel}. In this paper, S.~S.~Kim and K.~Ramanan prove LDPs for $k$-dimensional, deterministic Stiefel projections of classes of random vectors in $\R^d$ ($k<d$ fixed) including uniform distributions on suitably normalized $\ell_p^d$-balls with $1<p\leq \infty$ and product measures with sufficiently light tails; the results hold under some convergence assumption on the empirical measures of the properly scaled Stiefel matrix. The results are then nicely applied to obtain large deviations for random projections in the quenched sense, where  the Stiefel matrices are chosen at random with respect to the Haar measure on the Stiefel manifold and quenched refers to the fact that one conditions on almost sure realizations of the random Stiefel matrix. The authors also obtain a variational formula for the rate function governing the LDP for annealed projections. It is important to note that the key difference compared to \cite{KP2024} is in the object of study, which in \cite{KR2023_stiefel} is the projection point itself, hence yielding an LDP on $\R^k$, whereas in \cite{KP2024} it is the projected distribution on $\R^k$ (and thus the main result yields an LDP on the space of probability measures on $\R^k$.

Having its origin in theoretical computer science, the Kannan--Lov\'asz--Simonovits conjecture (see, e.g., \cite{AGB2015}) is arguably the most famous open problem in asymptotic geometric analysis and high-dimensional probability theory today; the currently best known result is due to B.~Klartag \cite{K2023}. As shown by D.~Alonso-Guti\'{e}rrez, J.~Prochno, and C.~Thäle \cite{APT2020}, there is a connection between this conjecture and the study of large and moderate deviations for isotropic log-concave random vectors, therefore providing an alternative possibility to tackle the conjecture by presenting a potential route to disproving it. In their work, the authors study the moderate deviations for the Euclidean norm of random orthogonally projected random vectors in $\B_p^d$, leading to a  number of interesting observations, namely that the $\ell_1^d$-ball is already critical for the new approach, that for $p \geq 2$ the rate function in the MDP undergoes a phase transition, depending on whether the scaling is below the square-root of the subspace dimensions or comparable, and finally that for $1\leq p < 2$ and comparable subspace dimensions, the rate function again displays a phase transition depending on its growth relative to $d^{p/2}$.

While the work on Sanov-type large deviations in the non-commutative setting of Schatten $p$-classes \cite{KPT2019_sanov} was the first in which an LDP was derived without the classical Schechtman--Zinn probabilistic representation, it took a while to understand how to go beyond the framework of the (commutative) sequence spaces $\ell_p$. Quite recently, Z.~Kabluchko and J.~Prochno used a principle deeply manifested in large deviations theory and statistical mechanics, the so-called maximum entropy or Gibbs principle for non-interacting particles, to establish a probabilistic approach to the geometry of Orlicz balls \cite{KP2020}. More precisely, they extracted from this maximum entropy principle an asymptotic version of a probabilistic representation for the uniform distribution on an Orlicz ball. This allowed them, among other things, to compute the precise asymptotic volume of Orlicz balls (based on sharp large deviations techniques) and to determine the threshold behavior for the volume of intersections of Orlicz balls, a result extending the $\ell_p$-ball counterpart of G.~Schechtman and M.~Schmuckenschl\"ager \cite{SS1991}. In particular, it became clear that the role of the Schechtman--Zinn probabilistic representation is replaced, in an asymptotic form, by appropriate Gibbs distributions whose potentials are given by Orlicz functions. Let us note that, in essence, those ideas have been used by F.~Barthe and P.~Wolff in \cite{BW2022} to show for a large family of Orlicz balls that they verify the R.~Kannan, L.~Lovász, and M.~Simonovits spectral gap conjecture.

S.~T.~Rachev and L.~R\"uschendorf \cite{RR1991} and A.~Naor and D.~Romik \cite{NR2003} unified results obtained in the 1980s by P.~Diaconis and D.~Freedman \cite{DF1987}, establishing
a connection between $\ell_p^d$-balls and a $p$-generalized Gaussian distribution. In \cite{JP2023_maxwell_orlicz}, S.~Johnston and J.~Prochno studied similar
questions in a significantly broader setting, looking at low-dimensional projections of random
vectors uniformly distributed on sets of the form $\B_{\phi,t}^d := \{ (s_1,\ldots,s_d) \in\R^d \,:\, \sum_{i=1}^d\phi(s_i) \leq  t d\}$, where
$\phi:\R \to [0,\infty]$ is a function satisfying some fairly mild conditions; this framework includes the simplex as well as the $\ell_p^d$-balls for $p>0$ and the more general case
of Orlicz balls. Using an approach completely different from \cite{RR1991} and  \cite{NR2003}, which is again based on ideas from the theory of large deviations and statistical mechanics, specifically those centered around Cram\'er’s theorem, the Gibbs conditioning principle, and Gibbs measures, the authors showed that there is a critical parameter $t_{\text{crit}}$ at which there is a phase transition in the behavior of the low-dimensional projections. More precisely, for $t > t_{\text{crit}}$ the coordinates of projected random vectors sampled from $\B_{\phi,t}^d$ behave like independent uniform random variables, but for $t \leq t_{\text{crit}}$ however the Gibbs conditioning principle comes into play, and here there is a parameter $\beta_t > 0$ (the so-called inverse temperature) such that the coordinates are approximately distributed according to a density proportional to $e^{-\beta \phi(s)}$. The general ideas of this paper have been developed in parallel with the ones put forward by Z.~Kabluchko and J.~Prochno in \cite{KP2020}.

In the work \cite{AP2020}, D.~Alonso-Guti\'errez and J.~Prochno studied the asymptotic thin-shell concentration for random vectors uniformly distributed on Orlicz balls. Their approach was inspired by the ideas developed in \cite{KP2020} and combined with moderate deviations techniques. Among other things, this allowed the authors to determine the precise asymptotic value of the isotropic constant of Orlicz balls and to provide both upper and lower bounds on the probability of such a random vector being in a thin shell whose radius is a constant multiple of the dimension $\sqrt{d}$. In fact, they manage to prove that in certain cases their bounds are best possible and improve upon the currently best known general bound when the deviation parameter goes down to zero as the dimension $n$ of the ambient space increases.

The work \cite{KR2018} of S.~S.~Kim and K.~Ramanan on Sanov-type large deviations and conditional limit theorems for $\ell_p$-spheres has recently been lifted to the setting of Orlicz spaces by L.~Frühwirth and J.~Prochno in \cite{FP2024}; the underlying idea using appropriate Gibbs distributions again stems from \cite{KP2020}. In the paper, a Sanov-type large deviations principle for the sequence of empirical measures of vectors chosen uniformly at random from an Orlicz ball is obtained and then, from this level-2 large deviations result, in a combination with Gibbs conditioning, entropy maximization, and the Orlicz version of the Poincaré--Maxwell--Borel lemma due to S.~Johnston and J.~Prochno \cite{JP2023_maxwell_orlicz}, the authors deduce a conditional limit theorem for high-dimensional Orlicz balls. In geometric parlance, the latter shows that if $M_1$ and $M_2$ are Orlicz functions, then random points in the $M_1$-Orlicz ball, conditioned on having a small $M_2$-Orlicz radius, look like an appropriately scaled $M_2$-Orlicz ball. In fact, it is shown that the limiting distribution in their Poincaré--Maxwell--Borel lemma, and thus the geometric interpretation, undergoes a phase transition depending on the magnitude of the $M_2$-Orlicz radius.

A few years ago there was actually a breakthrough on the commutative front. S.~S.~Kim, Y.-T.~Liao, and K.~Ramanan established both level--1 and level--2 large deviations principles for sequences of random vectors under asymptotic thin-shell-type conditions  \cite{KLR2019}. In its most elementary form, this condition requires the sequence of Euclidean norms of the random vectors, normalized by a factor $d^{-1/2}$, to satisfy an LDP at speed $d$ and with rate function having a unique minimum. Notably, the authors manage to establish this asymptotic thin-shell condition in the case of random vectors uniformly distributed on Orlicz balls whose Orlicz functions are of super-quadratic growth. This is very nice for several reasons, but most prominently is maybe the following: the authors identify a similarity to Klartag's famous central limit theorem which follows from the classical thin-shell estimate. The mentioned work is quite impressive and the goal in the near future will be to establish the asymptotic thin-shell condition for other classes of random vectors, for instance, for those chosen uniformly at random from unit balls of Lorentz sequence spaces (these spaces are, besides Orlicz sequence spaces, the most natural sequence space generalizations of $\ell_p$-spaces and also belong to the important class of $1$-symmetric Banach spaces); we refer to \cite{KPS2023} for a recent and first probabilistic approach in this setting. 

Last but not least, let us mention that in recent years sharp large deviations results in the flavor of R.~R.~Bahadur and R.~R.~Ranga Rao \cite{BRR1960} or V.~V.~Petrov \cite{P1965} have been obtained for several random geometric quantities. While LDPs identify the asymptotic exponential decay rate of probabilities in terms of speed and rate function, sharp large deviation estimates also provide the subexponential prefactor in front of the exponentially decaying term, which is not visible on the logarithmic scale of an LDP. In that sense, sharp large deviations provide more accurate quantitative estimates in finite dimensions, which is more in the spirit of the local theory of Banach spaces. This line of research was initiated by Y.-T.~Liao and K.~Ramanan \cite{LR2020} and picked up by T.~Kaufmann in \cite{K2021} and L.~Frühwirth and J.~Prochno in \cite{FP2024_sharp}. Since the techniques differ in various aspects from classical large deviations techniques, we shall not discuss those results here and rather refer the interested reader to the references above.

\section{An outrageously short introduction to large deviations theory}

Let us continue with a brief introduction to large deviations theory and some of its fundamental concepts. The systematic study and unified formalization of large deviations theory, which in essence concerns the asymptotic behavior of remote tails of sequences of probability distributions, was developed by S.~Varadhan in 1966 \cite{V1966}. In fact, he was later awarded the prestigious Abel Prize ``for his fundamental contributions to probability theory and in particular for creating a unified theory of large deviations'' in 2007.

We shall state results here in their most elementary form (sketch some of their proofs) and provide references to more general ones where needed. However, the goal is to provide the uninitiated reader with a rough understanding of the underlying ideas and principles that are required later on. For a thorough introduction to this fascinating topic, we refer the reader to the survey of S.~Varadhan \cite{V2008} or the wonderful monographs by A.~Dembo and O.~Zeitouni \cite{DZ2010}, R.~S.~Ellis \cite{E2006}, F.~den Hollander \cite{dH2000}, or by F.~Rassoul-Agha and T.~Sepp\"{a}l\"{a}inen \cite{RAS2015} as well as the survey article by H.~Touchette \cite{T2009}.

\subsection{Motivation}

Assume we are given iid random variables $X_1,X_2,\dots$, say for simplicity $\E[X_1]=0$ and $\Var[X_1]=\sigma^2>0$, and let $S_n:=\sum_{i=1}^nX_i$, $n\in\N$, denote the sequence of partial sums. The law of large numbers tells us that, for any $\varepsilon\in(0,\infty)$,
\[
\lim_{n\to\infty} \Pro\Big[\Big|\frac{S_n}{n} \Big|\geq \varepsilon\Big|\Big] = 0.
\]
Chebyshev's inequality yields a convergence rate of $1/n$, but can we, at least under sufficient moment conditions, improve upon that bound? 

An event of the form $\{|S_n|\geq n\varepsilon \}$ is referred to as a large deviation or rare event, compared to the normal scale $\{|S_n|\geq \sqrt{n}\varepsilon \}$ considered in a weak limit theorem. By a direct computation one can check that, for instance, a sequence of independent standard Gaussians $g_1,g_2,\dots$ satisfies, for any $a>0$,
\[
\lim_{n\to\infty} \frac{1}{n} \log\Pro\Big[\sum_{i=1}^n g_i \geq a n\Big] = -\frac{a^2}{2},
\]
and that a sequence of independent Rademacher random variables $\xi_1,\xi_2,\dots$ satisfies
\[
\lim_{n\to\infty} \frac{1}{n} \log\Pro\Big[\sum_{i=1}^n \xi_i \geq a n\Big] = - I(a),
\]
where $I:\R\to[0,\infty]$ is defined as
\[
I(x):=
\begin{cases}
\frac{1+x}{2}\log(1+x) + \frac{1-x}{2}\log(1-x) &:\,x\in[-1,1] \\
\infty &:\, |x|>1.
\end{cases}
\]
If $X_1,X_2,\dots$ denotes either the sequence of Gaussians or the one of Rademachers, then both examples indicate a similar behavior of the form 
\[
\Pro\Big[\sum_{i=1}^nX_i \geq a n\Big] \approx e^{-n\rate(a)}
\]
for an appropriate function $\rate:\R\to[0,\infty]$ and $a>0$. In particular, the rate of convergence is exponential and thus way faster than the rate $1/n$ suggested by Chebyshev's inequality.

This raises the question whether there is a more general principle behind this? Let us assume that the random variables $X_1,X_2,\dots$ are iid, centered, and have all exponential moments, i.e., for all $t\in\R$, the moment generating function satisfies
\[
\lambda_X(t):=\E\Big[e^{tX_1}\Big] < \infty.
\]
It follows from Markov's inequality, together with the iid assumption, that, for any $a,t>0$,
\[
\Pro\Big[S_n \geq a n\Big] = \Pro\Big[e^{tS_n}\geq e^{tan}\Big] \leq e^{-tan} \E\Big[e^{tS_n}\Big] = e^{-atn}\prod_{i=1}^n \lambda_{X}(t) = e^{-n[ta - \log\lambda_X(t)]},
\]
 and a simple optimization in the parameter $t>0$ shows that, for any $a>0$,
\[
\Pro\Big[S_n \geq a n\Big] \leq e^{-n\sup_{t>0}[ta-\log\lambda_X(t)]}.
\]
In particular,
\[
\limsup_{n\to\infty}\frac{1}{n}\log \Pro\Big[S_n \geq an\Big] \leq -n\sup_{t>0}[ta-\log\lambda_X(t)],
\]
and the right-hand side is essentially the Legendre transform of the log-moment generating function of $X_1$. 

\subsection{Cram\'er type large deviations}

It turns out that the so-called Chernoff bound with the Legendre transform presented above is actually optimal and this is the assertion of a classical theorem due to H. Cram\'er \cite{C1938}. Let us state this result here in its simplest form.

\begin{thm}[H. Cram\'er, 1938]\label{thm:cramer}
Let $X_1,X_2,\dots$ be iid random variables such that, for all $t\in\R$,
\[
\Lambda_X(t) :=\log \E\Big[e^{tX_1}\Big]<\infty.
\]
Then, for every $a>\E[X_1]$,
\[
\lim_{n\to\infty}\log \Pro[S_n\geq a n] = -\Lambda_X^*(a),
\]
where $\Lambda_X^*(a):=\sup_{s\in\R}[as-\Lambda_X(s)]$ is the Legendre transform of $\Lambda_X$.
\end{thm}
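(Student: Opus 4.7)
The plan is to establish matching upper and lower bounds for $\frac{1}{n}\log\Pro[S_n\geq an]$, each equal to $-\Lambda_X^*(a)$. The hypothesis that $\Lambda_X$ is finite on all of $\R$ is much stronger than needed but has several pleasant consequences that I would use freely: $\Lambda_X$ is smooth and strictly convex (assuming $X_1$ is non-degenerate; the constant case is trivial), $\Lambda_X'(0)=\E[X_1]$, and for every $a$ in the open interior of the range of $\Lambda_X'$ there is a unique $\tau=\tau(a)\in\R$ with $\Lambda_X'(\tau)=a$, at which the Legendre transform is attained, $\Lambda_X^*(a)=\tau a-\Lambda_X(\tau)$. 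Since $a>\E[X_1]=\Lambda_X'(0)$, this $\tau$ is strictly positive.

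For the upper bound I would simply follow the Chernoff computation already sketched in the motivation preceding the theorem. For any $t>0$, Markov's inequality, the iid assumption, and the multiplicativity of the moment generating function over independent sums yield
\[
\Pro[S_n\geq an]\leq e^{-tan}\,\E\!\left[e^{tS_n}\right]=e^{-n(ta-\Lambda_X(t))},
\]
so that $\frac{1}{n}\log\Pro[S_n\geq an]\leq -\sup_{t>0}(ta-\Lambda_X(t))$. Because $a>\E[X_1]$, convexity ensures that the supremum over $t>0$ agrees with the supremum over all $t\in\R$, which is exactly $\Lambda_X^*(a)$, and the upper bound $\limsup_{n\to\infty}\frac{1}{n}\log\Pro[S_n\geq an]\leq-\Lambda_X^*(a)$ follows.

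The matching lower bound is the heart of the theorem and I would obtain it through the classical exponential tilting (change of measure) argument due to Cram\'er. For each small $\eps>0$ pick $\tau_\eps>0$ solving $\Lambda_X'(\tau_\eps)=a+\eps$, and introduce the tilted law $\tilde{\Pro}_\eps$ under which the $X_i$ remain iid with marginal density $e^{\tau_\eps x-\Lambda_X(\tau_\eps)}$ relative to the law of $X_1$. A direct computation gives $\tilde{\E}_\eps[X_1]=\Lambda_X'(\tau_\eps)=a+\eps$, so the weak law of large numbers applied under $\tilde{\Pro}_\eps$ yields $\tilde{\Pro}_\eps[an\leq S_n\leq (a+2\eps)n]\to 1$ as $n\to\infty$. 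Reversing the change of measure and using $\tau_\eps>0$ to bound the tilt factor from below on the event $\{an\leq S_n\leq (a+2\eps)n\}$ then gives
\[
\Pro[S_n\geq an]\geq e^{-n(\tau_\eps(a+2\eps)-\Lambda_X(\tau_\eps))}\,\tilde{\Pro}_\eps[an\leq S_n\leq (a+2\eps)n].
\]
Taking $\frac{1}{n}\log$, passing to $\liminf_{n\to\infty}$, and finally letting $\eps\downarrow 0$ so that $\tau_\eps\to\tau(a)$ and the exponent converges to $-(\tau(a)a-\Lambda_X(\tau(a)))=-\Lambda_X^*(a)$ would close the argument.

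The main obstacle is not any of the computations themselves, all of which are standard once set up, but rather guaranteeing the existence of the tilt $\tau_\eps$, which requires the range of $\Lambda_X'$ to cover a right neighborhood of $a$. Under the blanket full-line finiteness assumption stated in the theorem this is essentially automatic, but in stronger versions of Cram\'er's theorem (finite mgf only on a neighborhood of $0$, or general Banach-space formulations) one has to carefully separate cases according to whether $a$ lies in the interior of the effective domain of $\Lambda_X^*$, on its boundary, or beyond the essential supremum of $X_1$, and handle the boundary cases by approximation or ad hoc arguments.
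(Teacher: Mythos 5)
Your proof is correct and takes essentially the same route as the paper: the Chernoff/Markov upper bound, and the Esscher/Cram\'er exponential tilting for the lower bound. The one stylistic difference worth noting is in how the tilted measure is used. The paper first reduces to $a=0$ (so $\E[X_1]<0$), tilts with the parameter $t_{\min}$ that makes the tilted mean \emph{exactly} $0$ (equivalently $\lambda_X'(t_{\min})=0$), and then lower-bounds $\widetilde\E[e^{t_{\min}\widetilde S_n}\mathbbm 1_{\{\widetilde S_n\geq 0\}}]$ by restricting to a window $\{0\leq \widetilde S_n\leq\sqrt n\}$ and invoking the central limit theorem to see that the probability of that window stays bounded away from $0$, so its $\tfrac1n\log$ tends to $0$. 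You instead overshoot slightly, tilting to mean $a+\eps$, so that the target event $\{S_n\geq an\}$ becomes \emph{typical} under the tilted law and the weak law of large numbers alone suffices; the extra $O(\eps)$ in the exponent is removed by letting $\eps\downarrow 0$ at the end. Both are standard; your variant is marginally more elementary in that it needs only the WLLN rather than a CLT window, at the cost of an extra limit in $\eps$. You are also right to flag the existence of the tilt parameter $\tau(a)$ (equivalently, whether $a$ lies in the range of $\Lambda_X'$) as the real subtlety; the paper's sketch addresses this by the ``three cases on the mass distribution'' remark when establishing that the supremum defining $\Lambda_X^*$ is attained on $(0,\infty)$, which plays the same role.
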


\begin{proof}[Idea of Proof.]
First of all, it is easy to see that it is enough to consider the case $\E[X_1]<0$ and $a=0$, i.e., it is enough to prove that
\[
\lim_{n\to\infty} \frac{1}{n} \log \Pro\Big[\sum_{i=1}^nX_i \geq 0\Big] = - \Lambda^*_Y(0).
\]

As we have seen, the upper bound follows essentially from the exponential Markov inequality. To argue that the supremum is actually attained on $(0,\infty)$ one needs to take the distribution of mass of $X_1$ into account, which means consider separately the following cases: (1) $\Pro[X_1<0]$, (2) $\Pro[X_1\leq 0]$ and $\Pro[X_1=0]>0$ as well as (3) $\Pro[X_1<0]>0$ and $\Pro[X_1>0]>0$. Note that we used here the reduction to $\E[X_1]<0$.

To explain the idea behind the proof the lower bound, let us assume that for a set of nice random variables $(Y_i)_{i\in\N}$ and $a>\E[Y_1]$, $\sigma^2=\Var[X_1]>0$, we are interested in $\Pro[\sum_{i=1}^nY_i\geq a n]$. If we standarize the partial sum of random variables, then we obtain
\[
\Pro\Big[\sum_{i=1}^nY_i\geq a n\Big] = \Pro\Big[ \frac{S_n-n\E[X_1]}{\sigma \sqrt{n}} \geq \frac{\sqrt{n}(a-\E[X_1])}{\sigma}\Big].
\]
Now, for large $n\in\N$, Gaussian approximation tells us that
\[
\Pro\Big[\sum_{i=1}^nY_i\geq a n\Big] \approx \frac{1}{\sqrt{2\pi}}\int_{\frac{\sqrt{n}(a-\E[X_1])}{\sigma}}^\infty e^{-y^2/2}\,\dint y.
\]
Because of the $\sqrt{n}$ in the lower bound of the integration domain, this Gaussian approximation is not good if $a-\E[X_1]$ is not small. The idea, which actually goes back to F. Esscher \cite{E1932}, is to perform an exponential tilt of the original measure (today known as Esscher transform or Cram\'er transform) that adds more weight to larger values, thereby shifting the mean to the right, which eliminates the lower bound of the integration domain. As it turns out, the price that one pays in the integrand is acceptable and the central limit theorem can be properly applied to obtain the lower bound in Cram\'er's theorem.

More formally, in our set-up of $\mu:=\Pro^{X_1}$, we define the exponentially tilted measure to be 
\[
\widetilde \mu(\dint x) := \frac{e^{t_{\text{min}}x}}{\rho} \mu(\dint x)
\]
with $\rho:=e^{-\Lambda_X^*(0)} = \inf_{t\in\R} \lambda_X(t)$ and $t_{\textrm{min}} > 0$ such that $\lambda_X(t_{\textrm{min}})=\rho$ and $\lambda_X'(t_{\textrm{min}})=0$. Given this measure, we consider independent random variables $\widetilde X_1, \widetilde X_2,\dots$ with distribution $\widetilde \mu$. In particular, those random variables satisfy $\E[\widetilde X_1]=0=a$ and thus deal with the problematic part mentioned before. Moreover, a change of measure from $\mu$ to $\widetilde \mu$ shows that 
\[
\Pro[S_n \geq 0] = \rho^n \E\Bigg[e^{t_{\text{min}}\widetilde S_n}\mathbbm 1_{\{\widetilde S_n \geq 0\}}\Bigg],
\]
where $\widetilde S_n := \sum_{i=1}^n\widetilde X_i$. Thus,
\[
\frac{1}{n} \log \Pro[S_n \geq 0] = \underbrace{\log(\rho)}_{=-\Lambda_X^*(0)} + \frac{1}{n} \log \E\Bigg[e^{t_{\text{min}}\widetilde S_n}\mathbbm 1_{\{\widetilde S_n \geq 0\}}\Bigg]
\]
and it is therefore enough to show that the second summand on the right-hand side is non-negative. But this follows from the classical central limit theorem, since
\begin{align*}
\frac{1}{n} \log \E\Bigg[e^{t_{\text{min}}\widetilde S_n}\mathbbm 1_{\{\widetilde S_n \geq 0\}}\Bigg] & \geq \frac{1}{n} \log \E \Bigg[ e^{-t_{\text{min}}\widetilde S_n} \mathbbm 1_{\{0 \leq \widetilde S_n \leq 1 \}}\Bigg] \cr
& \geq -\frac{t_{\text{min}} \sqrt{n}}{n} + \frac{1}{n} \log \Pro\Bigg[\frac{\widetilde S_n}{\sqrt{n}} \in[0,1]\Bigg] 
\,\, \stackrel{n\to\infty}{\longrightarrow} 0,
\end{align*}
which then completes the proof.
\end{proof}

A natural question that arises is the one of what happens if we are not in the luxurious situation where we are working with random variables having all exponential moments. What happens, for instance, if we are faced with a sequence of heavy tailed random variables (those naturally appear, e.g., when studying large deviations related to $\ell_p$-balls)? As it turns out, we can still prove a version of Cram\'er's theorem, also known as \emph{Cram\'er's theorem for random variables with stretched exponential tails}.

\begin{thm}[Cram\'er for stretched exponential tails]\label{thm:cramer stretched}
Consider iid and integrable random variables $X_1, X_2,\dots$. Assume that there exist constants $r\in(0,1)$, $c\in(0,\infty)$, and $0<C_1<C_2<\infty$ such that, for all $t\geq 0$,
\begin{equation}\label{eq:stretched exponential}
C_1 e^{-ct^r} \leq \Pro[X_1 \geq t] \leq C_2 e^{-ct^r}.
\end{equation}
Then, for any $a>\E[X_1]$, we have that
\[
\lim_{n\to\infty} \frac{1}{n^r} \log \Pro[S_n \geq an ] = -c\big(a-\E[X_1]\big)^r.
\]
\end{thm}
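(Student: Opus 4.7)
The plan is to prove matching asymptotic upper and lower bounds, guided by the ``one big jump'' principle from the theory of heavy-tailed sums: for subexponential tails, the cheapest route to $\{S_n \geq an\}$ is to let a single summand do essentially all the work.

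\textbf{Reduction.} Setting $Y_i := X_i - \E[X_i]$ preserves \eqref{eq:stretched exponential} after possibly adjusting $C_1, C_2$ (the rate constant $c$ is unchanged since $(t+\mu)^r - t^r = O(t^{r-1}) \to 0$ for $r<1$). So I may assume $\E[X_1]=0$ and the goal becomes $\lim_{n\to\infty} n^{-r} \log \Pro[S_n \geq an] = -ca^r$ for $a>0$.

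\textbf{Lower bound (one big jump).} For any $\eps \in (0,a)$, independence and the lower tail bound in \eqref{eq:stretched exponential} yield
\[
\Pro[S_n \geq an] \geq \Pro[X_1 \geq (a+\eps)n]\,\Pro[S_{n-1} \geq -\eps n] \geq C_1 e^{-c((a+\eps)n)^r}(1-o(1)),
\]
the $(1-o(1))$ factor following from the weak law of large numbers applied to $S_{n-1}$ (integrability of $X_1$ is guaranteed by \eqref{eq:stretched exponential}). Dividing by $n^r$ and letting $\eps\downarrow 0$ gives $\liminf_{n\to\infty} n^{-r}\log\Pro[S_n \geq an] \geq -ca^r$.

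\textbf{Upper bound.} Fix $\eps \in (0,a)$ and decompose according to $M_n := \max_i X_i$:
\[
\Pro[S_n \geq an] \leq \Pro[M_n \geq (a-\eps)n] + \Pro[S_n \geq an,\, M_n < (a-\eps)n].
\]
The first term is controlled by the union bound and the upper tail assumption, $\leq n C_2 e^{-c((a-\eps)n)^r}$, producing the target limit exponent $-c(a-\eps)^r$. For the second term, truncate $\widetilde X_i := X_i \mathbbm{1}_{\{X_i < (a-\eps)n\}}$ and apply the Chernoff inequality at a tilt $\lambda_n > 0$:
\[
\Pro\Big[\sum_{i=1}^n \widetilde X_i \geq an\Big] \leq \exp\!\big({-\lambda_n a n + n \log \E[e^{\lambda_n \widetilde X_1}]}\big).
\]
Expressing $\E[e^{\lambda \widetilde X_1}]$ as a tail integral in $\Pro[X_1 > v]$ (plus a bounded lower-tail contribution) and using the upper tail bound from \eqref{eq:stretched exponential}, I choose $\lambda_n = cr(a-\eps)^{r-1} n^{r-1}$ to match the slope of the concave function $ct^r$ at the truncation level $(a-\eps)n$. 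A direct calculation then shows the exponent is $\leq -c(a-\eps)^r n^r + o(n^r)$. Sending $\eps\downarrow 0$ completes the upper bound.

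\textbf{Main obstacle.} The heart of the argument is the MGF estimate in the no-big-jump regime. Because $ct^r$ is concave for $r<1$, its Legendre dual is identically $+\infty$ on $(0,\infty)$, so no fixed tilt yields a useful Chernoff bound and the optimal tilt must shrink as $\lambda_n \asymp n^{r-1}$. Calibrating $\lambda_n$ precisely enough to recover the sharp rate $-c(a-\eps)^r n^r$, rather than a suboptimal exponent, requires careful control of the near-truncation part $v \approx (a-\eps)n$ of the tail integral, where the integrand $\lambda_n e^{\lambda_n v - c v^r}$ attains its endpoint maximum. An alternative avoiding the delicate MGF calculus is to iterate the big/no-big-jump decomposition with intermediate thresholds $\eta n$ and $\eta \uparrow a-\eps$, or to invoke a Fuk--Nagaev-type inequality for heavy-tailed sums; either way, all routes reduce to confirming the one-big-jump picture quantitatively.
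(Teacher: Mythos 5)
Your overall architecture matches the paper's: one big jump for the lower bound, a decomposition on $\max_i X_i$ for the upper bound, and a shrinking-tilt Chernoff estimate for the no-big-jump term. But the tilt you commit to is wrong by a factor of $r$, and this costs you the sharp exponent.

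Concretely, you set $\lambda_n = cr(a-\eps)^{r-1}n^{r-1}$ (the tangent slope of $ct^r$ at $t=(a-\eps)n$). The leading Chernoff term is then
\[
e^{-\lambda_n a n} = \exp\!\big(-cr(a-\eps)^{r-1}a\, n^r\big),
\]
and as you compute, $n\log\E[e^{\lambda_n\widetilde X_1}] = O(n^{2r-1}) = o(n^r)$, so this $-\lambda_n a n$ term is all you get. Letting $\eps\downarrow 0$ gives $\limsup n^{-r}\log\Pro[S_n\geq an,\,\max<(a-\eps)n] \leq -cr a^r$. Since $r<1$, this is strictly larger than the target $-ca^r$, and since it is also larger than the $\max$ term's $-c(a-\eps)^r$, the overall upper bound you obtain is $-cra^r$, which does not match your lower bound $-ca^r$. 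The claim ``a direct calculation then shows the exponent is $\leq -c(a-\eps)^r n^r + o(n^r)$'' is therefore false: with the tangent tilt, no such calculation exists, because $cr(a-\eps)^{r-1}a < c(a-\eps)^r$ once $\eps < a(1-r)$.

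The paper's choice is the \emph{secant} tilt $\alpha = (c-\eps)(an)^{r-1}$, i.e.\ larger than yours by a factor $\approx 1/r$. Concavity of $t\mapsto ct^r$ gives $\alpha x \leq (c-\eps)x^r$ for all $x\leq an$, hence $\alpha x - cx^r \leq -\eps x^r$ on the relevant range, which is exactly what makes the truncated MGF manageable (after splitting at $\alpha^{-1}$ and using $e^u\le 1+u+u^2$ on the small part and an incomplete-gamma estimate on the large part). With this $\alpha$ the Chernoff leading term becomes $e^{-\alpha a n} = e^{-(c-\eps)(an)^r}$, which recovers $-ca^r$ in the limit $\eps\downarrow 0$. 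So the fix to your argument is to replace the tangent tilt by the secant tilt (or more directly, take $\lambda_n$ so that $\lambda_n \cdot T \approx c T^r$ at the truncation level $T$, not $\lambda_n \approx (cT^r)'$). You flag this calibration as the main obstacle, which is the right instinct, but the specific value you then choose does not overcome it.
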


The property of stretched exponential tails is expressed through \eqref{eq:stretched exponential}. In particular, one can easily check that the lower bound in \eqref{eq:stretched exponential} guarantees that $\E[e^{tX_1}]=\infty$ for all $t>0$. Note that the speed is $n^{r}$, which is much slower than the speed $n$ in Cram\'er's theorem (see Theorem \ref{thm:cramer}). Moreover, the rate function on the right-hand side is obviously non-convex, i.e., it does not arise as a Legendre transform. Since a proof is not contained in the standard literature, we provide at least a sketch here.

\begin{proof}[Idea of Proof of Theorem \ref{thm:cramer stretched}.]
Let $a>0$. Without loss of generality we may assume that $\E[X_1]=0$. We shall start with the lower bound, where one uses the philosophy that a sum of iid random variables with stretched exponential tails becomes large most easily if one of the random variables does, while all others have `normal' size. 
\vskip 2mm
\noindent \textit{Lower bound:} Let $\varepsilon>0$. Then, for each $n\in\N$, we obtain from
\[
X_1 \geq n(a+\varepsilon) \qquad\text{and}\qquad \sum_{i=2}^nX_i \geq -(n-1)\varepsilon
\]
that
\[
S_n = \sum_{i=1}^n X_i \geq n(a+\varepsilon) - (n-1)\varepsilon = an + \varepsilon.
\]
Hence, for all $n\in\N$,
\begin{align*}
\Pro[S_n \geq a n] 
& = \Pro[X_1 \geq n(a+\varepsilon)] \cdot \Pro\Big[\frac{S_{n-1}}{n-1} \geq -\varepsilon\Big],
\end{align*}
where we used the iid property of the sequence $X_1,X_2,\dots$. As a consequence of the weak law of large numbers,
\begin{equation}\label{eq:weak law in cramer without exp moments}
\Pro\Big[\frac{S_{n-1}}{n-1} \geq -\varepsilon\Big] \stackrel{n\to \infty}{\longrightarrow} 1.
\end{equation}
Thus, the lower bound in \eqref{eq:stretched exponential} yields that, for any $n\in\N$,
\begin{align*}
\frac{1}{n^r}\log \Pro[S_n \geq a n] 
& \geq  
\frac{1}{n^r}\log C_1 - c(a+\varepsilon)^r + \frac{1}{n^r}\log \Pro\Big[\frac{S_{n-1}}{n-1} \geq -\varepsilon\Big]\,.
\end{align*}
This together with \eqref{eq:weak law in cramer without exp moments} shows that
\[
\liminf_{n\to\infty} \frac{1}{n^r} \log \Pro[S_n \geq a n] \geq -c(a+\varepsilon)^r,
\]
which completes the proof of the lower bound for $\varepsilon\downarrow 0$.
\vskip 2mm
\noindent \textit{Upper bound:} The upper bound requires tedious computations, which we shall shorten considerably, leaving it to the reader to fill in the details. First, we note that by the very same philosophy already used in the lower bound, for all $n\in\N$,
\begin{align*}
\Pro[S_n \geq a n] & = \Pro\Big[S_n \geq a n\,,\, \max_{1\leq i \leq n}X_i \geq a n \Big] + \Pro\Big[S_n \geq a n\,,\, \max_{1\leq i \leq n}X_i < a n \Big] \cr
& \leq \Pro\Big[\max_{1\leq i \leq n} X_i \geq a n \Big] + \Pro\Big[S_n \geq  a n\,,\, \max_{1\leq i \leq n}X_i < a n\Big].
\end{align*}
We will show that the second summand does not dominate the first on the scale $n^r$.

Because of the identical distribution of the random variables, for the first summand we have
\[
\Pro\Big[\max_{1\leq i \leq n} X_i \geq a n \Big] \leq \sum_{i=1}^n \Pro[X_i \geq a n] = n \Pro[X_1 \geq a n].
\]
In combination with the upper bound in \eqref{eq:stretched exponential} this yields, for any $n\in\N$,
\begin{align*}
\frac{1}{n^r} \log \Pro\Big[\max_{1\leq i \leq n} X_i \geq a n \Big] 
& \leq  
\frac{1}{n^r}\log n + \frac{1}{n^r} \log C_2 - ca^r
\end{align*}
and so see that
\begin{equation}\label{eq:upper bound for max cramer no exp moments}
\limsup_{n\to\infty} \frac{1}{n^r} \log \Pro\Big[\max_{1\leq i \leq n} X_i \geq a n \Big] \leq -ca^r.
\end{equation}
Using the simple fact that, for all $N\in\N$ and every sequence $a_1(n),\dots,a_N(n) \geq 0$, $n\in\N$, 
\[
\limsup_{n\to\infty} \, \frac{1}{n^r} \,\log \sum_{i=1}^Na_i(n)
= \max_{1\leq i \leq N} \limsup_{n\to \infty}\frac{1}{n^r}\, \log a_i(n),
\]
we obtain for $N=2$ in combination with \eqref{eq:upper bound for max cramer no exp moments} that
\begin{align*}
\limsup_{n\to\infty} \frac{1}{n^r} \log \Pro[S_n \geq a n] 
& \leq \max\bigg\{ -ca^r\,,\, \limsup_{n\to\infty}\frac{1}{n^r}\log \Pro\Big[S_n \geq  a n\,,\, \max_{1\leq i \leq n}X_i < a n\Big]\bigg\}\,.
\end{align*}
Hence, it is enough to prove that the second entry in the maximum does not dominate the first. Using Markov's inequality, we obtain, for every $\alpha >0 $,
\begin{align*}
\E\bigg[ \prod_{i=1}^{n}e^{\alpha X_i}\mathbbm 1_{\{X_i< n a \}}\bigg]
&\geq 
e^{\alpha n a} \, \Pro\Big[S_n\geq  a n\,,\, \max_{1\leq i \leq n} X_i < a n \Big].
\end{align*} 
Equivalently, for every $\alpha>0$, we have
\[
\Pro\Big[S_n\geq  a n\,,\, \max_{1\leq i \leq n} X_i < a n \Big] \leq e^{-\alpha n a}\, \E\bigg[ \prod_{i=1}^{n}e^{\alpha X_i}\mathbbm 1_{\{X_i< n a \}}\bigg]
\]
and so the iid assumption yields that
\begin{align*}
\Pro\Big[S_n\geq  a n\,,\, \max_{1\leq i \leq n} X_i < a n \Big] 
& \leq 
e^{-\alpha n a}  \bigg( \E\Big[ e^{\alpha X_1}\mathbbm 1_{\{X_1< \alpha^{-1} \}}\Big] + \E\Big[ e^{\alpha X_1}\mathbbm 1_{\{\alpha^{-1}\leq X_1< a n \}}\Big] \bigg)^n.
\end{align*}
We will now show that for appropriate choice of $\alpha$, the factor $e^{-\alpha n a}$ has the desired rate $-ca^r$. 
\vskip 2mm
\emph{The first expectation:} Using that for all $x<1$, $e^x \leq 1 + x +x^2$, and that, for all $x\in\R$, $1+x \leq e^x$, we obtain from a direct computation that  (since $\E[X_1]=0$ and $\sigma^2=\E[X_1^2]$)
\begin{align}\label{eq:abschaetzung erster erwartungswert cramer ohne exp moments}
\E\Big[ e^{\alpha X_1}\mathbbm 1_{\{X_1< \alpha^{-1} \}}\Big]
&\leq 
e^{\alpha^2 \sigma^2}\,.
\end{align}

\vskip 2mm
\emph{The second expectation:} It follows from  a tedious computation involving integration by parts for Lebesgue-Stieltjes integrals that
\[
\E[e^{\alpha \xi}\mathbbm 1_{\{a \leq \xi \leq b \}}] = e^{\alpha a}\Pro[\xi\geq a] - e^{\alpha b}\Pro[\xi> b] + \alpha \int_a^b e^{\alpha x}\Pro[\xi>x]\,\dint x
\]
holds for any random variable $\xi$ and all $a<b$. In our setting, together with the upper bound in \eqref{eq:stretched exponential}, we obtain 
\begin{align*}
\E\Big[ e^{\alpha X_1}\mathbbm 1_{\{\alpha^{-1}\leq X_1< a n \}}\Big] 
& \leq 
C_2 \alpha \int_{\alpha^{-1}}^{a n} e^{\alpha x - cx^r}\,\dint x + C_2e^{1-c\alpha^{-r}},
\end{align*} 
with absolute constant $C_2\in(0,\infty)$.
Now we choose $\alpha:= (c-\varepsilon)(an)^{r-1}$ with $\varepsilon\in(0,c)$. Then, for all $x\in[\alpha^{-1}, an]$, $\alpha x - c x^r \leq - \varepsilon x^r$ (where we used $r\in(0,1)$). Thus, for sufficiently large $n\in\N$,
\begin{align*}
\E\Big[ e^{\alpha X_1}\mathbbm 1_{\{\alpha^{-1}\leq X_1< a n \}}\Big]
& \leq C_2 \alpha \int_{\alpha^{-1}}^{a n} e^{ - \varepsilon x^r}\,\dint x + e^{-Cn^{r(1-r)}},
\end{align*}
where $C:=C(a,c,C_2,\varepsilon,r)\in(0,\infty)$. This means that, for sufficiently large $n\in\N$, 
\begin{align*}
\E\Big[ e^{\alpha X_1}\mathbbm 1_{\{\alpha^{-1}\leq X_1< a n \}}\Big]
 & \leq 
 C_2\alpha \frac{1}{\varepsilon^{1/r}r} \Gamma(r^{-1};\varepsilon\alpha^{-r}) + e^{-Cn^{r(1-r)}} ,
\end{align*}
where $\Gamma(\cdot;\cdot)$ is the incomplete gamma function. Using the asymptotic $\Gamma(x;y) \sim y^{x-1}e^{-y}$ for $y\to\infty$, we obtain for sufficiently large $n\in\N$ (note that $\alpha=\alpha(n)$) that
\begin{eqnarray}\label{eq:abschaetzung zweiter erwartungswert cramer ohne exp moments}
\E\Big[ e^{\alpha X_1}\mathbbm 1_{\{\alpha^{-1}\leq X_1< a n \}}\Big]
 \leq e^{-\widetilde{C}n^{r(1-r)}},
\end{eqnarray}
with $\widetilde C:=\widetilde C(a,c,C_2,\varepsilon,r)\in(0,\infty)$.

\vskip 2mm
\emph{Completion of the proof:}
We will use the bounds \eqref{eq:abschaetzung erster erwartungswert cramer ohne exp moments} and \eqref{eq:abschaetzung zweiter erwartungswert cramer ohne exp moments}. For the choice $\alpha= (c-\varepsilon)(an)^{r-1}$ and sufficiently large $n\in\N$, we obtain
\begin{align*}
\Pro\Big[S_n\geq  a n\,,\, \max_{1\leq i \leq n} X_i < a n \Big] 
& \leq 
e^{-\alpha n a} \bigg( e^{\alpha^2 \sigma^2} +  e^{-\widetilde{C}n^{r(1-r)}} \bigg)^n \cr
& \leq e^{-(c-\varepsilon)(an)^r} e^{C_1n^{2r-1}} \bigg( 1 +  e^{-\widetilde{C}n^{r(1-r)}} \bigg)^n,
\end{align*}
where $C_1:=C_1(\sigma,c,\varepsilon,a,r) \in(0,\infty)$. Since 
\[
\bigg( 1 +  e^{-\widetilde{C}n^{r(1-r)}} \bigg)^n \stackrel{n\to\infty}{\longrightarrow} 1,
\]
we have that, for large $n\in\N$,
\[
 e^{C_1n^{2r-1}} \bigg( 1 +  e^{-\widetilde{C}n^{r(1-r)}} \bigg)^n \leq e^{2C_1n^{2r-1}}.
\]
Noting that $2r-1< r$ and so $2C_1n^{2r-1} = o(n^r)$ for $n\to\infty$, we see that
\begin{align*}
\limsup_{n\to\infty}\frac{1}{n^r}\log \Pro\Big[S_n \geq  a n\,,\, \max_{1\leq i \leq n}X_i < a n\Big] & \leq \limsup_{n\to\infty} \frac{1}{n^r} \log\Big(e^{-(c-\varepsilon)(an)^r}e^{o(n^r)}\Big) = -(c-\varepsilon)a^r.
\end{align*}
Taking $\varepsilon\downarrow 0$ completes the proof.
\end{proof}

\begin{rmk}
We chose to state this theorem here, because later we shall use a more general version of such a result (where the constant can dependent on the parameter $t$), which is due to N.~Gantert, K.~Ramanan, and F.~Rembart \cite{GRR2014}. But the philosophy behind it is more easily explained in the situation considered in Theorem \ref{thm:cramer stretched}.
\end{rmk}

\subsection{Sanov's theorem for the empirical measure}\label{sec:sanovs ldp theorem}

Another fundamental theorem in large deviations theory, aside from Cram\'er's theorem, is Sanov's theorem. This result concerns so-called level--$2$ large deviations for the empirical measure of random variables and goes back to I.~N.~Sanov \cite{S1961}. We shall present here a more general and today standard form, following the presentation in \cite{RAS2015}.

Let $E$ be a Polish space which will always be endowed with the Borel $\sigma$-field. Consider $\lambda\in\mathscr M_1(E)$, where $\mathscr M_1(E)$ is the set of probability measures on $E$. Consider iid and $E$-valued random variables $X_1,X_2,\dots$ having distribution $\lambda$ and define the corresponding empirical measure via
\[
\mu_n := \frac{1}{n}\sum_{i=1}^n \delta_{X_i},\qquad n\in\N.
\]
This is a random probability measure on $E$, i.e., each $\mu_n$ is an random element of $\mathscr M_1(E)$. 
\vskip 1mm
\begin{rmk}
Let us recall some well known facts. Since $E$ is a Polish space, $\mathscr M_1(E)$ is a Polish space itself in the weak topology generated by $\mathscr C_b(E)$, the space of bounded continuous functions on $E$ taking values in $\R$.

$\mathscr M(E)$ will denote the space of signed measures on $E$ of which $\mathscr M_1(E)$ is a subspace. Then $\mathscr M(E)$ and $\mathscr C_b(E)$ are in duality via 
\[
\langle \nu,f \rangle = \int_E f\,\dint\mu
\]
for $\nu\in\mathscr M(E)$, $f\in\mathscr C_b(E)$. 

Convergence of probability measures in the weak topology $\sigma(\mathscr M(E),\mathscr C_b(E))$ is the same as the weak convergence $\nu_n\stackrel{w}{\longrightarrow} \nu$, which holds by definition if and only if $\langle \nu_n,f\rangle \rightarrow \langle \nu,f \rangle$ for all $f\in\mathscr C_b(E)$ \footnote{Recall that the weak topology on $\mathscr M(E)$, $\sigma(\mathscr M(E),\mathscr C_b(E))$, is the minimal topology under which all functions $\nu\mapsto \langle \nu , f, \rangle$, $f\in \mathscr C_b(E)$ are continuous.}.
\end{rmk}
\vskip 1mm
Sanov's theorem describes the large deviations behavior of the distributions 
\[
\rho_n:= \Pro\big[\mu_n \in B\big],\qquad B\subset \mathscr M_1(E)\,\,\text{Borel set},
\]
of $\mu_n$, $n\in\N$. Prior to proving an LDP it is good to develop a sense for a law of large numbers, which we shall do now before stating Sanov's theorem. First, we note that by the classical strong law of large numbers, for all $f\in\mathscr C_b(E)$,
\begin{align}\label{eq:mu_n of f slln}
\mu_n(f) & := \E_{\mu_n}[f] = \frac{1}{n}\sum_{i=1}^n f(X_i) \xrightarrow[n\to\infty]{\Pro-a.s.} \E[f(X_1)] = \E_\lambda[f].
\end{align}
Let us recall that a sequence $(g_i)_{i\in\N}$ is said to determine weak convergence on $\mathscr M_1(E)$ if and only if $\nu_n\stackrel{w}{\longrightarrow} \nu$ is equivalent to the condition that, for all $i\in\N$, $\langle \nu_n , g_i \rangle \rightarrow \langle \nu,g_i\rangle$. Now, since our Polish space is by definition separable, there exists a countable, convergence determining collection of $\mathscr C_b(E)$-functions. Let us denote those functions by $(f_i)_{i\in\N}$ with $f_i\in\mathscr C_b(E)$ for all $i\in\N$. Because of \eqref{eq:mu_n of f slln}, for every $i\in\N$,
\[
\mu_n(f_i) \xrightarrow[n\to\infty]{\Pro-a.s.} \E_\lambda[f_i].
\]
In particular, because $(f_i)_{i\in\N}$ determines weak convergence on $\mathscr M_1(E)$, we obtain the following strong law of large numbers for $(\mu_n)_{n\in\N}$:
\[
\mu_n \xrightarrow[n\to\infty]{\Pro-a.s.} \lambda.
\]
Now that we have developed an understanding of the typical asymptotic behavior of the sequence of empirical measures corresponding to $X_1,X_2,\dots$, we shall look at the asymptotic probabilities of rare events treated in Sanov's theorem.

\begin{thm}[Sanov's theorem]
Let $E$ be a Polish space and, for each $n\in\N$, let $\rho_n$ be the distribution of the empirical measure 
\[
\mu_n:= \frac{1}{n}\sum_{i=1}^n \delta_{X_i},
\]
corresponding to the iid sequence $(X_i)_{i\in\N}$ having distribution $\lambda\in\mathscr M_1(E)$. Then $(\rho_n)_{n\in\N}$ satisfies an LDP on $\mathscr M_1(E)$ at speed $n$ with good convex rate function $H(\cdot|\lambda) : \mathscr M_1(E) \to [0,\infty]$ being the relative entropy 
\[
H(\nu|\lambda) := 
\begin{cases}
\int_E f \log f \,\dint\lambda &: \nu \ll \lambda \,\text{ and }\, f=\frac{\dint \nu}{\dint \lambda}\\
\infty & : \text{otherwise}.
\end{cases}
\]
\end{thm}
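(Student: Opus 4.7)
The plan is to prove the LDP via matching bounds, anchored on the Donsker--Varadhan variational representation
\[
H(\nu|\lambda) = \sup_{\phi \in \mathscr{C}_b(E)} \Bigl[ \langle \nu, \phi\rangle - \log \int_E e^\phi\,\dint\lambda \Bigr],
\]
which I would establish first by Legendre duality: the log-moment generating function of the $\mathscr{M}_1(E)$-valued random variable $\delta_{X_1}$ paired against $\phi \in \mathscr{C}_b(E)$ equals $\log\int e^\phi\,\dint\lambda$, and its convex conjugate on $\mathscr{M}_1(E)$ is $H(\cdot|\lambda)$. As a supremum of weakly continuous affine functionals, $H(\cdot|\lambda)$ is automatically lower semicontinuous and convex; nonnegativity follows by plugging in $\phi \equiv 0$. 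Goodness will then follow from the fact that sublevel sets $\{H(\cdot|\lambda) \leq c\}$ are uniformly tight (via Pinsker's inequality together with the tightness of $\lambda$ on the Polish space $E$) and hence relatively compact by Prokhorov.

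For the upper bound, the iid structure yields the exact identity $\ee{e^{n\langle \mu_n,\phi\rangle}} = \bigl(\int e^\phi\,\dint\lambda\bigr)^n$ for every $\phi \in \mathscr{C}_b(E)$, so the exponential Markov inequality gives, for every closed $F \subset \mathscr{M}_1(E)$,
\[
\tfrac{1}{n}\log \pr{\mu_n \in F} \leq -\inf_{\nu \in F}\langle \nu,\phi\rangle + \log \int e^\phi\,\dint\lambda.
\]
For compact $F$ a minimax interchange (Sion's theorem applies since the expression is affine in $\nu$ and concave in $\phi$) swaps the infimum and supremum, producing the desired bound $-\inf_{\nu \in F} H(\nu|\lambda)$. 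The reduction from closed to compact $F$ is handled by proving exponential tightness of $(\mu_n)$: using the tightness of $\lambda$ one picks compacts $C_k \subset E$ with $\lambda(C_k^c)$ exponentially small, sets $K_L := \{\nu : \nu(C_k^c) \leq 1/k \text{ for all } k \geq L\}$ (compact in the weak topology by Prokhorov), and bounds $\pr{\mu_n(C_k^c) > 1/k}$ via the scalar Cram\'er theorem applied to the Bernoullis $\mathbbm{1}_{C_k^c}(X_i)$, concluding $\limsup \tfrac{1}{n}\log \pr{\mu_n \notin K_L} \leq -L$.

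For the lower bound, fix an open set $G$ and any $\nu \in G$ with $H(\nu|\lambda) < \infty$, so that $\nu \ll \lambda$ with density $f := \dint\nu/\dint\lambda$. Introduce the tilted product law $\Pro_\nu$ under which $X_1, X_2,\ldots$ are iid with common law $\nu$; the Radon--Nikodym identity yields
\[
\pr{\mu_n \in G} = \E_\nu\Bigl[\exp\!\bigl(-\textstyle\sum_{i=1}^n \log f(X_i)\bigr)\,\mathbbm{1}_{\{\mu_n \in G\}}\Bigr].
\]
Since $H(\nu|\lambda)<\infty$ makes $\log f$ integrable under $\nu$ (the negative part of $f\log f$ is bounded by $1/e$) with $\E_\nu[\log f] = H(\nu|\lambda)$, the SLLN gives $\frac{1}{n}\sum_{i=1}^n \log f(X_i) \to H(\nu|\lambda)$ almost surely under $\Pro_\nu$, and the law of large numbers for empirical measures from the preceding subsection yields $\mu_n \to \nu$ weakly. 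Hence on an event of $\Pro_\nu$-probability tending to $1$ one has simultaneously $\mu_n \in G$ and $\sum \log f(X_i) \leq n(H(\nu|\lambda) + \eps)$, which gives $\pr{\mu_n \in G} \geq e^{-n(H(\nu|\lambda)+\eps)}(1-o(1))$. Letting $\eps \downarrow 0$ and taking the infimum over admissible $\nu \in G$ completes the bound.

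The main obstacle I expect lies in the upper bound. The minimax exchange works cleanly only on compact sets, so the bulk of the technical work goes into the exponential tightness argument, which demands both a careful choice of the compact exhaustion of $E$ (with the right exponential decay of $\lambda(C_k^c)$) and sharp Cram\'er-type control of Bernoulli deviations. By contrast, the lower bound is essentially a single change-of-measure step combined with the law of large numbers once the Donsker--Varadhan formula has been established; a mild truncation of $f$ is only needed if one prefers to avoid the direct integrability argument for $f(\log f)^-$.
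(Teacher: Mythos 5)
The paper does not contain a proof of Sanov's theorem; immediately after the statement it says ``We shall not provide a proof of Sanov's theorem here, but rather refer the interested reader to the nice exposition of the proof in \cite{RAS2015}.'' There is therefore no in-paper argument to compare against, and your proposal has to stand on its own.

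Your overall architecture --- the Donsker--Varadhan variational formula for $H(\cdot|\lambda)$, a Chernoff-type upper bound on compact sets combined with exponential tightness, and a change-of-measure lower bound --- is the standard and correct one. The lower bound is essentially complete: the Radon--Nikodym tilting, the $\nu$-integrability of $\log f$ via the bound $f(\log f)^-\leq 1/e$, the SLLN for $\frac1n\sum\log f(X_i)$ and the almost-sure weak LLN $\mu_n\to\nu$ under $\Pro_\nu$ combine exactly as you say. (Note one cosmetic point: the displayed identity $\Pro[\mu_n\in G]=\E_\nu[\,\cdot\,]$ is in general only an inequality $\geq$, because $\lambda$ need not be absolutely continuous with respect to $\nu$; the inequality is all you use.) The exponential-tightness sketch is also sound.

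There is, however, a genuine gap in the upper bound. You invoke Sion's minimax theorem to swap $\sup_\phi\inf_{\nu\in F}$ and $\inf_{\nu\in F}\sup_\phi$ for compact $F$, on the grounds that the integrand is affine in $\nu$ and concave in $\phi$. Sion's theorem requires the set over which the infimum is taken to be \emph{convex} in addition to compact, and a general compact $F\subset\mathscr M_1(E)$ appearing in an LDP upper bound has no reason to be convex. The affine/concave structure of the functional does not rescue this: the non-convexity of $F$ is exactly the obstruction to the naive minimax. The standard fix is a covering argument rather than a minimax: for each $\nu\in F$ pick a test function $\phi_\nu$ that nearly achieves $H(\nu|\lambda)$ in the variational formula, pick a small weak-open neighborhood $U_\nu\ni\nu$ on which $\inf_{\mu\in U_\nu}\langle\mu,\phi_\nu\rangle\geq\langle\nu,\phi_\nu\rangle-\delta$, extract a finite subcover of $F$ by compactness, apply the Chernoff bound on each piece, and sum; this is the route taken in \cite{DZ2010}. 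You should replace the Sion step by this argument (or by a projective-limit / Dawson--G\"artner reduction, which is the other common route).

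A smaller issue is the goodness claim via Pinsker: Pinsker's inequality bounds total variation by $\sqrt{H(\nu|\lambda)/2}$, which degrades as the level $c$ of the sublevel set grows, so it does not directly give uniform tightness of $\{\nu:H(\nu|\lambda)\leq c\}$ for large $c$. The usual argument applies the variational inequality $\langle\nu,\phi\rangle\leq H(\nu|\lambda)+\log\int_E e^{\phi}\,\dint\lambda$ with $\phi=t\,\mathbbm 1_{K^c}$, $t>0$ large and $K$ compact with $\lambda(K^c)$ small, to get $\nu(K^c)\leq\bigl(c+\log(1+(e^t-1)\lambda(K^c))\bigr)/t$ uniformly over the sublevel set.
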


In words, Sanov's theorem tells us that the probability that the empirical measures resemble some distribution $\nu$ other than the underlying distribution $\lambda$ behaves like $e^{-n H(\nu|\lambda)}$, i.e., the rate is determined by the entropy of $\nu$ relative to $\lambda$. We shall not provide a proof of Sanov's theorem here, but rather refer the interested reader to the nice exposition of the proof in \cite{RAS2015}.

\subsection{The Gärtner--Ellis theorem for dependent random variables}\label{sec:gaertner ellis theorem}

Before presenting the general theory of large deviations, let us describe a classical generalization of Cram\'er's theorem due to J.~G\"artner \cite{G1977} and R.~S.~Ellis \cite{E1984}, which shows that we may allow random sequences with a kind of moderate dependence, and still have LDP behavior. More precisely, let us consider random vectors $(Y_n)_{n\in\N}$ taking  values in $\R^d$ and having moment generating functions
  \[
    \lambda_n(t) := \E\Big[e^{\langle t,Y_n \rangle} \Big], \quad t\in\R^d, \, n\in\N;
  \]
here $\langle\cdot,\cdot\rangle$ denotes the standard inner product on $\R^d$.  We shall assume throughout this subsection that the following two conditions are satisfied:
\vskip 2mm
(GE1)  $\lim\limits_{n\to\infty}\frac{1}{n}\log\big(\lambda_n(nt)\big) = \Lambda(t)\in[-\infty,\infty]$ exists,
\vskip 1mm
(GE2) $0\in\mathcal D_{\Lambda}^{\circ}$, with $\mathcal D_{\Lambda} := \{t\in\R^d\,:\, \Lambda(t)<\infty \}$.
\vskip 1mm
Under the previous assumptions, it is rather easy to show that $\Lambda$ is convex and satisfies $\Lambda>-\infty$. Moreover, the Legendre transform $\Lambda^*(\cdot):= \sup_{t\in\R^d}[ \langle \cdot,t\rangle - \Lambda(t) ]$ is a convex good rate function (i.e., not everywhere equal to $\infty$ with compact sublevel sets). 

Let us recall that a point $x\in\R^d$ is called an exposed point for $\Lambda^*$ if and only if there exists a point $t\in\R^d$ such that, for all $\R^d\ni y \neq x$,
  \[
    \Lambda^*(y) - \Lambda^*(x) > \langle y-x,t \rangle.
  \]
This $t$ is the normal vector to an exposing hyperplane for the point $x\in\R^d$. Note that, since $\Lambda^*$ is convex, for each $x\in\mathcal D_{\Lambda^*}$ there exists $t\in\R^d$ such that $\Lambda^*(y)-\Lambda^*(x) \geq \langle t, y-x\rangle$ for all $y\in\R^d$, i.e., a supporting hyperplane at the point $(x,\Lambda^*(x))$ of the graph of $\Lambda^*$. Thus, at an exposed point there exists a supporting hyperplane sharing no other point with the graph.

\begin{thm}[Gärtner--Ellis theorem]\label{thm:gaertnerellis}
Let $(Y_n)_{n\in\N}$ be a sequence of random vectors in $\R^d$ such that (GE1) and (GE2) are satisfied, and let $\mu_n := \Pro^{Y_n}$, $n\in\N$. Then, for all closed sets $C\subset \R^d$,
  \[
    \limsup_{n\to\infty}\frac{1}{n} \log\big(\mu_n(C)\big) \leq -\inf_{x\in C}\Lambda^*(x),
  \]
and, for all open sets $O\subset \R^d$,
  \[
    \liminf_{n\to\infty}\frac{1}{n} \log\big(\mu_n(O)\big) \geq  -\inf_{x\in O\cap E}\Lambda^*(x),
  \]
where $E$ is the collection of exposed points of $\Lambda^*$ whose exposing hyperplane belongs to $\mathcal D_{\Lambda}^{\circ}$. 

Suppose that, in addition, $\Lambda$ is lower semi-continuous on $\R^d$, differentiable on $\mathcal D_{\Lambda}^{\circ}$, and that $\mathcal D_{\Lambda}=\R^d$ or $\Lambda$ is steep on the boundary $\partial\mathcal D_{\Lambda}$ of $\mathcal D_{\Lambda}$ (meaning that $\lim\limits_{t\rightarrow\partial D_\Lambda \colon t\in D_\Lambda} \lVert \nabla \Lambda(t) \rVert_2 = \infty$). Then we may replace $O\cap E$ by $O$. In large deviations parlance, the sequence $(\mu_n)_{n\in\N}$ satisfies an LDP on $\R^d$ with speed $n$ and rate function $\Lambda^*$.
\end{thm}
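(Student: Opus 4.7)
The plan is to follow the classical route: establish the upper bound on closed sets via an exponential Chebyshev argument combined with exponential tightness, then handle the lower bound on open sets by exponential tilting at exposed points, and finally upgrade the lower bound to all points of $\mathcal{D}_{\Lambda^*}$ under the additional smoothness hypotheses via convex analysis.

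First I would prove the upper bound. The key local estimate is that for any $x\in\R^d$, $\delta>0$, and $t\in\R^d$, the Markov-type bound
\[
\mu_n\bigl(B(x,\delta)\bigr) \leq e^{-n\langle t,x\rangle + n\delta \|t\|}\, \lambda_n(nt)
\]
together with (GE1) yields $\limsup_{n\to\infty} \frac{1}{n}\log \mu_n(B(x,\delta)) \leq -\langle t,x\rangle + \delta\|t\| + \Lambda(t)$; taking the infimum over $t$ and letting $\delta\downarrow 0$ gives $-\Lambda^*(x)$. For a compact set $K\subset \R^d$, a finite subcover argument in combination with the ``max of logarithms'' identity I used in the proof of Theorem~\ref{thm:cramer stretched} then yields $\limsup_{n\to\infty} \frac{1}{n}\log\mu_n(K) \leq -\inf_{x\in K}\Lambda^*(x)$. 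To extend this to closed sets one needs exponential tightness, i.e.\ compact sets $K_M$ with $\limsup_{n\to\infty} \frac{1}{n}\log \mu_n(K_M^{\,c}) \leq -M$; this is where assumption (GE2) enters, since $0\in\mathcal D_\Lambda^\circ$ provides enough finite exponential moments for each coordinate to apply the one-dimensional Markov bound with $t=\pm\varepsilon e_i$ and choose $K_M$ to be a large cube.

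For the lower bound I would reduce, via covering open sets by small balls, to proving that for any exposed point $x\in E$ with exposing hyperplane normal $\eta\in\mathcal D_\Lambda^\circ$,
\[
\liminf_{n\to\infty}\frac{1}{n}\log \mu_n(B(x,\delta)) \geq -\Lambda^*(x)
\]
for every $\delta>0$. Here I would use the Cram\'er/Esscher tilting idea from the proof of Theorem~\ref{thm:cramer}: introduce the tilted probability measures $\tilde\mu_n$ defined via $\dint\tilde\mu_n/\dint\mu_n(y) = e^{n\langle\eta,y\rangle}/\lambda_n(n\eta)$, so that
\[
\mu_n(B(x,\delta)) = \lambda_n(n\eta)\int_{B(x,\delta)} e^{-n\langle\eta,y\rangle}\,\dint\tilde\mu_n(y) \geq \lambda_n(n\eta)\, e^{-n\langle\eta,x\rangle - n\delta\|\eta\|}\,\tilde\mu_n(B(x,\delta)).
\]
Taking $\frac{1}{n}\log$, the first two factors produce $\Lambda(\eta)-\langle\eta,x\rangle-\delta\|\eta\| \to -\Lambda^*(x)-\delta\|\eta\|$ as $n\to\infty$, so the task reduces to showing $\liminf_{n\to\infty}\frac{1}{n}\log \tilde\mu_n(B(x,\delta))\geq 0$. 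This is the main obstacle: one must argue that under the tilted law, $Y_n/n$ concentrates near $x$. The point is that the tilted cumulant function is $\Lambda(\eta+\cdot)-\Lambda(\eta)$, and the exposed-point property forces $x$ to be the unique minimizer of the corresponding Legendre transform evaluated at $0$; applying the already-proved upper bound to $\tilde\mu_n$ on the complement of $B(x,\delta)$ shows $\tilde\mu_n(B(x,\delta)^{\,c})\to 0$ at exponential rate, hence $\tilde\mu_n(B(x,\delta))\to 1$, as needed.

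Finally, under the additional assumption that $\Lambda$ is lower semi-continuous and either $\mathcal D_\Lambda=\R^d$ or $\Lambda$ is essentially smooth (steep) on $\partial\mathcal D_\Lambda$, I would invoke standard convex analysis to show that every $x\in\mathcal D_{\Lambda^*}$ is exposed with exposing hyperplane in $\mathcal D_\Lambda^\circ$. Concretely, steepness plus differentiability of $\Lambda$ on $\mathcal D_\Lambda^\circ$ implies that $\nabla\Lambda$ is a bijection from $\mathcal D_\Lambda^\circ$ onto the interior of the range of subgradients of $\Lambda^*$, and by Legendre duality each $x=\nabla\Lambda(\eta)$ with $\eta\in\mathcal D_\Lambda^\circ$ is an exposed point of $\Lambda^*$. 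Points of $\mathcal D_{\Lambda^*}$ outside this range are handled by approximation using lower semi-continuity of $\Lambda^*$ and convexity, so we may replace $O\cap E$ by $O$ in the lower bound, completing the LDP with speed $n$ and rate function $\Lambda^*$.
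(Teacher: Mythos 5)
The paper does not actually prove the G\"artner--Ellis theorem: it states the result and then defers entirely to the standard references (Dembo--Zeitouni and den Hollander), so there is no in-paper proof against which to compare. Judged on its own, your sketch is a faithful outline of the standard Dembo--Zeitouni argument (their Theorem~2.3.6): upper bound by the local Chernoff estimate plus finite subcover for compacts, then exponential tightness via (GE2) and coordinatewise Markov bounds to pass to closed sets; lower bound by exponential tilting at exposed points and feeding the already-proved upper bound to the tilted sequence; finally essential smoothness to drop the restriction to $E$.

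Two places where the sketch glosses over genuine content and would need to be filled in for a complete proof. First, in the tilting step you assert that $\frac{1}{n}\log\lambda_n(n\eta)-\langle\eta,x\rangle\to-\Lambda^*(x)$; this requires the Fenchel equality $\Lambda(\eta)+\Lambda^*(x)=\langle\eta,x\rangle$, which follows from $\eta\in\partial\Lambda^*(x)$ (a consequence of exposedness) together with the identity $\Lambda=\Lambda^{**}$ on $\mathcal D_\Lambda^\circ$; the latter uses convexity of $\Lambda$ (which comes for free from (GE1) as a limit of convex cumulants) and continuity of a finite convex function on an open set. You also need $\inf_{B(x,\delta)^c}\tilde\Lambda^*>0$, which uses goodness of the tilted rate function --- again supplied by $\eta\in\mathcal D_\Lambda^\circ$, i.e.\ exactly the condition defining $E$. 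Second, the last paragraph compresses what is in fact one of the more technical pieces of the DZ proof: under essential smoothness one uses Rockafellar's theorem that the conjugate of an essentially smooth, proper, lsc, convex function is essentially strictly convex, identifies $\mathrm{ri}(\mathcal D_{\Lambda^*})$ with exposed points whose normals lie in $\mathcal D_\Lambda^\circ$, and then handles relative-boundary points by a limiting argument. Naming it ``approximation using lower semi-continuity of $\Lambda^*$ and convexity'' is directionally right but hides the actual work. Neither issue is a flaw in the plan --- they are exactly the points a fleshed-out write-up would have to address.
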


The steepness assumption may be removed in the iid setting \cite[Corollary 6.1.6]{DZ2010}, but is generally required for dependent random sequences. Moreover, one can generalize the result to topological vector spaces and we refer to \cite[Section 4.5]{DZ2010}. For a more detailed discussion, we refer the interested reader to \cite{DZ2010} and \cite{dH2000}; it is also discussed there how the Gärtner--Ellis theorem implies both Cram\'er's and Sanov's theorem.

\subsection{The abstract theory of large deviations}

As already mentioned, a general theory of large deviations, going far beyond the ideas behind Cram\'er's theorem, was developed by S.~Varadhan in \cite{V1966}. So let us explore on what principles this unified theory is build upon. 

\begin{df}[Rate function and LDP]
Let $\mathscr X$ be a topological Hausdorff space and $\mathscr B_{\mathscr X}$ the Borel $\sigma$-field on $\mathscr X$. A rate function $\rate:\mathscr X\to[0,\infty]$ is a lower-semicontinuous mapping with $\rate \not\equiv \infty$, which is said to be a good rate function if the level sets $\{ x\in\mathscr X\,:\, \rate(x) \leq a \}$, $a\in[0,\infty)$ are not only closed, but compact. The effective domain of the rate function is given by
\[
\mathscr D_{\rate} = \big\{ x\in\mathscr X\,:\, \rate(x)<\infty \big\}. 
\] 
Let $(\nu_n)_{n\in\N}$ be a sequence of probability measures on $\mathscr B_{\mathscr X}$. Further, let $(s_n)_{n\in\N}$ be a sequence of positive reals
with $s_n\uparrow\infty$ and $\rate:\mathscr X\to[0,\infty]$ be a lower semi-continuous function.
We say that $(\nu_n)_{n\in\N}$ satisfies a (full) large deviation principle (LDP) at speed $s_n$ and with rate function $\rate$ if and only if
\begin{equation}\label{eq:LDPdefinition}
\begin{split}
-\inf_{x\in A^\circ}\rate(x)
\leq\liminf_{n\to\infty}{1\over s_n}\log\nu_n(A)
\leq\limsup_{n\to\infty}{1\over s_n}\log\nu_n(A)\leq -\inf_{x\in\overline{A}}\rate(x) 
\end{split}
\end{equation}
for all Borel sets $A\subset \mathscr X$, where $A^\circ$ and $\overline{A}$ denote the interior and closure of $A$, respectively.
We say that $(\nu_n)_{n\in\N}$ satisfies a weak LDP with speed $s_n$ and rate function $\rate$ if the rightmost upper bound in \eqref{eq:LDPdefinition} is valid only for compact sets $A\subset \mathscr X$.
\end{df}

\begin{rmk}
The notion of an LDP for a sequence of random variables is obtained by considering the LDP for the corresponding sequence of distributions.
\end{rmk}
\begin{rmk}
The estimates in \eqref{eq:LDPdefinition}, describing the LDP, can also be reformulated in terms of an upper and lower large deviations bound holding for closed and open sets respectively, which is very much in the spirit of the bounds in the Gärtner--Ellis theorem (see Theorem \ref{thm:gaertnerellis} above).  
\end{rmk}
\begin{rmk}\label{rem:good rate function}
Let us remark that the rate function controlling the LDP is always unique.
A consequence of the rate function being good is that it attains its infimum on closed sets. Moreover, since $\nu_n(\mathscr X)=1$ for any $n\in\N$, for the upper bound in \eqref{eq:LDPdefinition} to hold it is necessary that $\inf_{x\in\mathscr X}\rate(x) = 0$. So when $\rate$ is a good rate function, then there must exist an element $x\in\mathscr X$ such that $\rate(x)=0$.

Note that on metric spaces $M$, one may check the lower-semicontinuity on sequences, i.e., $\rate$ is lower-semicontinuous if and only if, for all $x\in M$,
\[ 
\liminf_{x_n\to x}\,\rate(x_n) \geq \rate(x). 
\]
\end{rmk}


\begin{rmk}
In view of the Portmanteau theorem, this definition reminds us of the weak convergence of measures. However, to the uninitiated reader this definition only mildly resembles what one might expect after having seen Cram\'er's theorem. Naively one is tempted to think of a definition like
\[
\lim_{n\to\infty} \frac{1}{s_n} \log \nu_n(A) = -\inf_{x\in A}\rate(x).
\]
As it turns out, this is far to restrictive to allow us to build a rich and powerful theory. Indeed, note that if $(\nu_n)_{n\in\N}$ is a sequence of non-atomic measures, then $\nu_n(\{x\})=0$ for any $x\in\mathscr X$. But this means that $\rate \equiv \infty$, which is not allowed in the definition of a rate function. So we really need some topological restrictions in the definition of an LDP.
\end{rmk}

As we shall see later, when discussing the work on large deviation principles in the asymptotic theory of geometric functional analysis, one way of proving the large deviation estimates in \eqref{eq:LDPdefinition} is to first prove the upper bound for compact sets or, in other words, to first prove a weak LDP. Heuristically, this can work if most of the mass in $\mathscr X$ is concentrated (on an exponential scale) on compact sets.

\begin{df}[Exponential tightness]
Let $\mathscr X$ be a Hausdorff topological space with Borel $\sigma$-field $\mathscr B_{\mathscr X}$ and assume that $(s_n)_{n\in\N}$is a sequence of positive reals with $s_n\uparrow\infty$. We say that a sequence $(\nu_n)_{n\in\N}$ of probability measures on $\mathscr B_{\mathscr X}$ is exponentially tight (at scale $s_n$) if and only if for every $C\in(0,\infty)$ there exists a compact set $K_C\subset \mathscr X$ such that
\[
\limsup_{n\to\infty} \frac{1}{s_n}\log \nu_n(K_C^c) < C.
\]
\end{df}

\begin{rmk}\label{rem:exponential tightness}
Let us note that exponential tightness of a sequence of measures is not necessary for the sequence to satisfy an LDP with a good rate function. In nice situations however, if we are working with locally compact Hausdorff topological spaces $\mathscr X$ or with Polish spaces (i.e., a separable completely metrizable topological space), then goodness of the rate function implies exponential tightness (see, e.g., \cite{DZ2010}). 
\end{rmk} 

As it turns out, the notion of exponential tightness is the key to lifting a weak LDP to a full one (see, e.g., \cite[Lemma 1.2.18]{DZ2010}). At the same time, this guarantees us a good rate function in the LDP.

\begin{lemma}
Let $\mathscr X$ be a Hausdorff topological space with Borel $\sigma$-field $\mathscr B_{\mathscr X}$ and $(\nu_n)_{n\in\N}$ a sequence of probability measures on $\mathscr B_{\mathscr X}$. Suppose that $(\nu_n)_{n\in\N}$ satisfies a weak LDP at speed $s_n$ and with rate function $\rate$. If $(\nu_n)_{n\in\N}$ is exponentially tight at scale $s_n$, then it satisfies a full LDP at speed $s_n$ and with rate function $\rate$. In particular, in this case the rate function $\rate$ is a good rate function.  
\end{lemma}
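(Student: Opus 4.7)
The weak LDP already yields the lower bound in \eqref{eq:LDPdefinition} for all Borel sets $A \subset \mathscr{X}$, so the plan is to leverage exponential tightness in order to upgrade the compact-set upper bound to a closed-set upper bound, and then to deduce goodness of $\rate$ from this upgraded upper bound together with the open-set lower bound.

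\textbf{Step 1 (upper bound for closed sets).} Fix a closed set $F \subset \mathscr{X}$ and a constant $C \in (0,\infty)$. By exponential tightness, there exists a compact set $K_C \subset \mathscr{X}$ with $\limsup_{n\to\infty}\frac{1}{s_n}\log \nu_n(K_C^c) < -C$. Since $\mathscr{X}$ is Hausdorff, $K_C$ is closed, and hence $F \cap K_C$ is a closed subset of a compact set, hence compact. Writing
\[
\nu_n(F) \leq \nu_n(F \cap K_C) + \nu_n(K_C^c)
\]
and using the elementary identity
\[
\limsup_{n\to\infty}\frac{1}{s_n}\log(a_n + b_n) = \max\!\left(\limsup_{n\to\infty}\frac{1}{s_n}\log a_n,\ \limsup_{n\to\infty}\frac{1}{s_n}\log b_n\right)
\]
for nonnegative sequences, the compact-set weak upper bound applied to $F\cap K_C$ gives
\[
\limsup_{n\to\infty}\frac{1}{s_n}\log \nu_n(F) \leq \max\!\left(-\inf_{x\in F\cap K_C}\rate(x),\, -C\right) \leq \max\!\left(-\inf_{x\in F}\rate(x),\, -C\right).
\]
Letting $C\to\infty$ produces the desired closed-set upper bound. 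Combined with the open-set lower bound that is already in force, this establishes the full LDP at speed $s_n$ with rate function $\rate$.

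\textbf{Step 2 (goodness of $\rate$).} The level sets $\{\rate \leq a\}$ are closed by lower-semicontinuity, so it suffices to show that each is contained in a compact set. Fix $a \in [0,\infty)$ and choose $C > a$; let $K_C$ be as above. Suppose, for contradiction, that some $x^{*} \in \{\rate \leq a\}$ fails to lie in $K_C$. Since $K_C^c$ is open and contains $x^{*}$, the weak LDP lower bound gives
\[
\liminf_{n\to\infty}\frac{1}{s_n}\log \nu_n(K_C^c) \geq -\inf_{x\in K_C^c}\rate(x) \geq -\rate(x^{*}) \geq -a > -C,
\]
contradicting the exponential tightness inequality $\limsup \frac{1}{s_n}\log \nu_n(K_C^c) < -C$. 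Hence $\{\rate \leq a\} \subset K_C$, so the level set is a closed subset of a compact set, and therefore compact.

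The main delicate point is the $\max$-identity for the $\limsup$ of $\frac{1}{s_n}\log(a_n+b_n)$, which controls how the two pieces $\nu_n(F\cap K_C)$ and $\nu_n(K_C^c)$ combine on the exponential scale; once this is in hand, both the upgrade from weak to full LDP and the compactness of sublevel sets follow cleanly from the weak LDP bounds and the exponential tightness estimate.
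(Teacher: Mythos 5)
Your proof is correct and is essentially the standard argument found in the reference the paper points to for this lemma (Dembo--Zeitouni, Lemma 1.2.18); the paper itself does not include a proof but defers to that source, so there is nothing to deviate from. Both steps are sound: the closed-set upper bound via the decomposition $\nu_n(F) \leq \nu_n(F\cap K_C)+\nu_n(K_C^c)$ together with the exponential-scale $\max$ identity, and the compactness of sublevel sets by showing $\{\rate\leq a\}\subset K_C$ via the open-set lower bound applied to $K_C^c$.

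One small remark worth flagging: you (correctly) work with the exponential tightness bound $\limsup_{n\to\infty}\frac{1}{s_n}\log\nu_n(K_C^c) < -C$, whereas the definition as printed in the paper reads $< C$, which is vacuous for probability measures since the left-hand side is always $\leq 0$. That is clearly a sign typo in the paper, and your proof implicitly uses the correct version; it would not go through with the literal text of the paper's definition.
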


\subsubsection{Three fundamental principles}

There are several useful tools in the theory of large deviations that allow one to obtain an LDP on one space by a transformation of a known LDP on some other. Such transformations seem rather innocent at first, but are not to be underestimated in their power. Note that we will refrain from providing any examples here, because we shall see all principles in action later when presenting some of the works on large deviations elaborated upon in the introduction.

The first stability result concerns the preservation of LDPs under continuous mappings and is commonly known as the contraction principle. Since the proof is quite short, we will present it here.

\begin{lemma}[Contraction principle]\label{lem:contraction principle}
Let $\mathscr X$ and $\mathscr Y$ be Hausdorff topological spaces and let $T:\mathscr X \to \mathscr Y$ be a continuous mapping. Denote their respective Borel $\sigma$-fields by $\mathscr B_{\mathscr X}$ and $\mathscr B_{\mathscr Y}$. If $(\nu_n)_{n\in\N}$ is a sequence of probability measures on $\mathscr B_{\mathscr X}$ satisfying an LDP at speed $s_n$ with a good rate function $\rate:\mathscr X\to[0,\infty]$, then the sequence of pushforwards
\[
\mu_n := \nu_n \circ T^{-1},\qquad n\in\N
\]
satisfies an LDP on the space $\mathscr Y$ at the same speed $s_n$ and with good rate function $\mathbb J:\mathscr Y\to [0,\infty]$
\[
\mathbb J(y) := \inf_{x\in\mathscr X, T(x)=y} \rate(x),
\]
where we agree that the infimum over the empty set is equal to $\infty$.
\end{lemma}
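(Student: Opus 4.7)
The plan is to verify the lemma in two stages: first that $\mathbb{J}$ is a good rate function on $\mathscr{Y}$, and second that the upper and lower LDP bounds for $(\mu_n)_{n\in\N}$ follow from those for $(\nu_n)_{n\in\N}$ via the continuity of $T$.

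\textbf{Step 1: $\mathbb{J}$ is a good rate function.} I would establish the identity
\[
\{y \in \mathscr{Y} : \mathbb{J}(y) \leq a\} \;=\; T\bigl(\{x \in \mathscr{X} : \rate(x) \leq a\}\bigr), \qquad a \geq 0.
\]
The inclusion $\supseteq$ is immediate. For $\subseteq$, given $y$ with $\mathbb{J}(y) \leq a$, choose a minimizing sequence $(x_k)_{k\in\N}$ in $T^{-1}(\{y\})$ with $\rate(x_k) \to \mathbb{J}(y)$. Eventually the $x_k$ lie in the compact sublevel set $\{\rate \leq a+1\}$ (here goodness is essential), so a subsequence $x_{k_j} \to x^*$ exists. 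Continuity of $T$ gives $T(x^*) = y$, and lower semicontinuity of $\rate$ yields $\rate(x^*) \leq \liminf_j \rate(x_{k_j}) = \mathbb{J}(y) \leq a$. Thus each sublevel set of $\mathbb{J}$ is the continuous image of a compact set, hence compact, and closed in the Hausdorff space $\mathscr{Y}$; this is precisely lower semicontinuity of $\mathbb{J}$ together with the good-rate-function property. Finally, picking any $x_0$ with $\rate(x_0) < \infty$ gives $\mathbb{J}(T(x_0)) < \infty$, so $\mathbb{J} \not\equiv \infty$.

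\textbf{Step 2: LDP bounds.} For every Borel set $B \subset \mathscr{Y}$ one has $\mu_n(B) = \nu_n(T^{-1}(B))$, and partitioning $T^{-1}(B)$ along the fibers of $T$ gives
\[
\inf_{x \in T^{-1}(B)} \rate(x) \;=\; \inf_{y \in B}\, \inf_{x :\, T(x) = y} \rate(x) \;=\; \inf_{y \in B} \mathbb{J}(y).
\]
If $C \subset \mathscr{Y}$ is closed, then $T^{-1}(C)$ is closed in $\mathscr{X}$ by continuity, so the LDP upper bound for $(\nu_n)_{n\in\N}$ yields
\[
\limsup_{n\to\infty} \frac{1}{s_n} \log \mu_n(C) \;\leq\; -\inf_{x \in T^{-1}(C)} \rate(x) \;=\; -\inf_{y \in C} \mathbb{J}(y).
\]
For an open set $O \subset \mathscr{Y}$, the preimage $T^{-1}(O)$ is open, and the LDP lower bound for $(\nu_n)_{n\in\N}$ combined with the same fiber identity yields the matching lower bound for $\mu_n(O)$.

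\textbf{Main obstacle.} The only genuinely subtle point is Step~1: one must invoke both goodness of $\rate$ and continuity of $T$ to ensure the infimum defining $\mathbb{J}(y)$ is actually attained, so that the sublevel sets of $\mathbb{J}$ coincide with $T$-images of the compact sublevel sets of $\rate$. Without the goodness hypothesis this identification breaks down, and lower semicontinuity of $\mathbb{J}$ would no longer be automatic; with goodness in hand, the Hausdorff property of $\mathscr{Y}$ alone suffices to close the argument. Step~2 is then essentially a bookkeeping exercise using continuity of $T$ and the fiber decomposition of infima.
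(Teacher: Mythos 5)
Your proof follows essentially the same strategy as the paper's for the key step — identifying the sublevel sets of $\mathbb J$ with the continuous images $T(\{\rate\leq a\})$ — but is in fact more complete: the paper's proof only establishes that $\mathbb J$ is a good rate function and omits Step~2 entirely (the transfer of the upper and lower LDP bounds), which you correctly supply via the fiber decomposition $\inf_{T^{-1}(B)}\rate=\inf_{B}\mathbb J$ and the fact that $T^{-1}$ preserves openness and closedness.

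One technical caveat in your Step~1: you extract a convergent subsequence from a minimizing sequence in the compact set $\{\rate\leq a+1\}$, but the lemma is stated for arbitrary Hausdorff topological spaces, where compactness does not imply sequential compactness. The conclusion — that the infimum in the definition of $\mathbb J(y)$ is attained whenever it is finite — is still correct, but the argument should instead observe that $T^{-1}(\{y\})$ is closed (since $\mathscr Y$ is Hausdorff and $T$ is continuous), so $T^{-1}(\{y\})\cap\{\rate\leq a+1\}$ is a nonempty compact set on which the lower-semicontinuous $\rate$ attains its minimum, or equivalently use the finite intersection property of the nested nonempty compact sets $T^{-1}(\{y\})\cap\{\rate\leq \mathbb J(y)+1/k\}$. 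This is precisely what the paper appeals to via its Remark~\ref{rem:good rate function}, which records that a good rate function attains its infimum on closed sets. In the applications of the survey the underlying spaces are Polish, so your sequential argument would go through there; but to match the generality of the statement, the net-free compactness argument is the right one.
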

\begin{proof}
Let us first argue that the function $\mathbb J$ is indeed a good rate function.
We start by showing that $\mathbb J\not\equiv \infty$. Since $\rate$ is a (good) rate function,
$
\mathscr D_{\rate} = \big\{x\in \mathscr X\,:\, \rate(x)<\infty \big\}\neq \emptyset.
$ 
And so it follows from the definition of $\mathbb J$ that
$
\mathscr D_{\mathbb J} = \{y\in \mathscr Y\,:\, \mathbb J(y)<\infty \}\neq \emptyset,
$
hence $\mathbb J\not\equiv \infty$. We now show that the function $\mathbb J$  is good. Clearly, $\mathbb J$ is non-negative and, in view of Remark \ref{rem:good rate function} and the continuity of $T$, attains its infimum for every $y\in T(\mathscr X)$ at some point in $\mathscr X$. We shall use this to show that the level sets are compact. Let $a\in[0,\infty)$. Since $\rate$ is good, 
$
\{ x\in \mathscr X\,:\, \rate(x)\leq a \}
$
is a compact set in $\mathscr X$. Therefore, since $T$ is continuous,
$
T\big(\{ x\in E\,:\, \rate(x)\leq a \}\big)
$
is a compact set in $\mathscr Y$. But since
\begin{align*}
T\big(\big\{ x\in \mathscr X\,:\, \rate(x)\leq a \big\}\big) & = \big\{T(x)\,:\, x\in \mathscr X,\,\rate(x)\leq a\big \} 
 = \big\{y\in \mathscr y\,:\, \exists x\in \mathscr X: T(x)=y\,\,\land\,\, \rate(x)\leq a \big\} \cr
& = \big\{y\in \mathscr Y\,:\, \inf_{x\in T^{-1}(y)}\rate(x)\leq a\big \} 
 = \big\{y\in \mathscr Y\,:\, \mathbb J(y) \leq a \big\},
\end{align*}
which shows the compactness of level sets. We used that $\mathbb J$ attains its minimum on closed sets in the equality from line one to line two.
\end{proof}

The next principle frequently used, often in combination with the contraction principle, is the following one describing the large deviations behavior of pairs of independent random objects. We shall restrict ourselves to the case of Polish spaces, enough for the purpose of this paper. However, the result can be formulated and proved for regular topological spaces (see, e.g., \cite[Exercise 4.2.7]{DZ2010}), in which case one needs to additionally assume the exponential tightness of measures; by Remark \ref{rem:exponential tightness} this follows from the goodness of rate functions in Polish spaces.

\begin{lemma}\label{lem: ldp pairs of independent random objects}
Let $E$ be a Polish space. Assume that $(\nu_n)_{n\in\N}$ and $(\mu_n)_{n\in\N}$ are sequences of probability measures on $E$. Suppose that for all $n\in\N$, $(X_n,Y_n)$ is distributed according to the product measure $\nu_n\otimes \mu_n$ on $\mathscr B_{E} \otimes \mathscr B_{E}$ (i.e., $X_n$ and $Y_n$ are independent). Assume that $(\nu_n)_{n\in\N}$ satisfies an LDP at speed $s_n$ with good rate function $\rate_X$ and $(\mu_n)_{n\in\N}$ an LDP at speed $s_n$ with good rate function $\rate_Y$. Then,\\
(a) 
$(X_n,Y_n)$, $n\in\N$, satisfies an LDP on $E\times E$ at speed $s_n$ with good rate function $\mathbb J: E\times E \to [0,\infty]$,
\[
\mathbb J(x,y) := \rate_X(x) + \rate_Y(y).
\] 
(b) If $T:E\times E \to F$ is a continuous mapping into another Polish space $F$, then the sequence $Z_n:=T(X_n,Y_n)$, $n\in\N$, satisfies an LDP on $F$ at speed $s_n$ with good rate function $\rate_Z: F \to [0,\infty]$,
\[
\rate_Z(z) = \inf_{(x,y)\in E\times E\atop{T(x,y)=z}} \rate_X(x) + \rate_Y(y). 
\]
\end{lemma}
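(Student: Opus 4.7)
The plan is to establish part (a) via the standard two-sided approach to product LDPs, and then obtain part (b) as a one-line application of the contraction principle. First I would verify that $\mathbb{J}(x,y):=\rate_X(x)+\rate_Y(y)$ is a good rate function on the product Polish space $E\times E$: it is lower semi-continuous as a sum of lower semi-continuous functions, and since
\[
\{\mathbb{J}\le a\}\subseteq \{\rate_X\le a\}\times\{\rate_Y\le a\},
\]
its sublevel sets are closed subsets of a compact product and hence compact. The \emph{lower bound} for an open $O\subseteq E\times E$ is then straightforward: fix an arbitrary $(x_0,y_0)\in O$, choose basic open sets $U\ni x_0$, $V\ni y_0$ with $U\times V\subseteq O$, and use independence together with the two individual LDP lower bounds to obtain
\[
\liminf_{n\to\infty}\frac{1}{s_n}\log\Pro[(X_n,Y_n)\in O] \ge -\rate_X(x_0)-\rate_Y(y_0) = -\mathbb{J}(x_0,y_0);
\]
taking the supremum over $(x_0,y_0)\in O$ gives $-\inf_O\mathbb{J}$.

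The main technical step, and the only real obstacle, is the \emph{upper bound} for closed sets, because independence only factorises probabilities of rectangles. I would deal with it by combining exponential tightness with a finite cover argument. By Remark \ref{rem:exponential tightness}, goodness of $\rate_X$ and $\rate_Y$ on the Polish space $E$ yields exponential tightness of $(\nu_n)$ and $(\mu_n)$; taking products of the tightness compacta transfers exponential tightness to $(\nu_n\otimes\mu_n)$ on $E\times E$. Given a closed set $C\subseteq E\times E$ and $L>0$, I would pick a compact $K_L\subseteq E\times E$ with $\limsup_n s_n^{-1}\log(\nu_n\otimes\mu_n)(K_L^c)\le -L$ and split
\[
\Pro[(X_n,Y_n)\in C] \le (\nu_n\otimes\mu_n)(C\cap K_L) + (\nu_n\otimes\mu_n)(K_L^c).
\]
The second piece is harmless for $L$ large. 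For the first, for each $(x_0,y_0)\in C\cap K_L$ I would use lower semi-continuity of $\rate_X$ and $\rate_Y$ to produce open neighborhoods $U_{x_0}\ni x_0$ and $V_{y_0}\ni y_0$ with $\inf_{U_{x_0}}\rate_X+\inf_{V_{y_0}}\rate_Y\ge \mathbb{J}(x_0,y_0)-\varepsilon$, extract a finite subcover of the compact set $C\cap K_L$, apply independence and the two individual LDP upper bounds to each rectangle, and aggregate via the elementary fact that $\limsup_n s_n^{-1}\log\sum_{i=1}^N a_i(n)=\max_i\limsup_n s_n^{-1}\log a_i(n)$. Sending $L\to\infty$ and $\varepsilon\downarrow 0$ then yields $\limsup_n s_n^{-1}\log\Pro[(X_n,Y_n)\in C]\le -\inf_C\mathbb{J}$.

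Part (b) is an immediate consequence of the contraction principle (Lemma \ref{lem:contraction principle}) applied to the continuous map $T\colon E\times E\to F$: since $\mathbb{J}$ is a good rate function, the pushforward of the joint law of $(X_n,Y_n)$ satisfies an LDP at the same speed $s_n$ with good rate function
\[
\rate_Z(z)=\inf_{(x,y)\in E\times E\atop T(x,y)=z}\mathbb{J}(x,y)=\inf_{(x,y)\in E\times E\atop T(x,y)=z}\bigl(\rate_X(x)+\rate_Y(y)\bigr),
\]
which matches the claim. The entire delicacy of the lemma is concentrated in the finite-cover step of part (a): without exponential tightness, a closed set need not be containable in a finite union of product rectangles, and the product-structured estimates supplied by independence cannot be assembled into a bound on an arbitrary closed set — which is precisely why the Polish assumption and goodness of the individual rate functions are invoked at the outset.
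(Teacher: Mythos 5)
The paper states this lemma without proof, pointing the reader to Dembo--Zeitouni \cite[Exercise 4.2.7]{DZ2010} and noting, exactly as you do, that the needed exponential tightness follows from goodness of the individual rate functions in the Polish setting (Remark \ref{rem:exponential tightness}). Your proposal reproduces that standard argument correctly: the lower bound via product rectangles, the upper bound via exponential tightness plus a finite rectangular cover of the compact part of the closed set, and part (b) as a direct application of the contraction principle (Lemma \ref{lem:contraction principle}).

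One small detail worth making explicit when you write out the finite-cover step: the individual LDP upper bounds apply to \emph{closed} sets, while the rectangles $U_{x_0}\times V_{y_0}$ produced by lower semi-continuity are open. To conclude $\inf_{\overline{U_{x_0}}}\rate_X \ge \rate_X(x_0)-\varepsilon/2$ (and likewise for $Y$) you should first use lower semi-continuity to obtain a possibly larger open neighbourhood $W_{x_0}$ on which $\rate_X>\rate_X(x_0)-\varepsilon/2$, and then pick $U_{x_0}$ with $\overline{U_{x_0}}\subseteq W_{x_0}$, which is always possible since a Polish space is metrizable. After the finite subcover is extracted, apply the factor upper bounds to the closures $\overline{U_{x_i}}$, $\overline{V_{y_i}}$ and aggregate with the $\limsup$-of-a-finite-sum identity you quote. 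With that clarification, the argument is complete and matches the approach the paper relies on.
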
 

The last principle we shall present here concerns the possibility of transferring an LDP from one sequence of random variables to another as long as those sequences are close on an exponential scale. Let $M$ be a metric space with distance $\dint$, equipped with its Borel $\sigma$-field $\mathscr B_{M}$. We say that two sequences of $M$-valued random variables $(X_n)_{n\in\N}$ and $(Y_n)_{n\in\N}$ are exponentially equivalent at speed $s_n$ if and only if, for all $\delta\in(0,\infty)$,
\[
\limsup_{n\to\infty} \frac{1}{s_n} \log\Pro\big[\dint(X_n,Y_n) > \delta \big] = -\infty.
\]
The next result shows that, as far as the LDP is concerned, exponentially equivalent random variables cannot be distinguished (see, e.g., \cite[Theorem 4.2.13]{DZ2010}).

\begin{lemma}\label{lem:exponential equivalence same ldp}
Let $M$ be a metric space equipped with its Borel $\sigma$-field $\mathscr B_{M}$ and assume that $(X_n)_{n\in\N}$, $(Y_n)_{n\in\N}$ are sequences of $M$-valued random variables. If $(X_n)_{n\in\N}$ satisfies an LDP at speed $s_n$ with good rate function $\rate$ and $(X_n)_{n\in\N}$ and $(Y_n)_{n\in\N}$ are exponentially equivalent, then $(Y_n)_{n\in\N}$ satisfies an LDP at the same speed $s_n$ with the same good rate function $\rate$.
\end{lemma}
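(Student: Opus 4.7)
The plan is to prove the LDP upper and lower bounds for $(Y_n)_{n\in\N}$ separately, transferring each from the corresponding bound for $(X_n)_{n\in\N}$ through two elementary set inclusions. For any $A\subset M$ and $\delta>0$, writing $A^\delta:=\{y\in M : \dint(y,A)<\delta\}$ for the open $\delta$-enlargement, one has
\[
\Pro[Y_n\in A] \,\leq\, \Pro[X_n\in A^\delta] + \Pro[\dint(X_n,Y_n)>\delta],
\]
and, for any $x\in M$,
\[
\Pro[X_n\in B(x,\delta/2)] \,\leq\, \Pro[Y_n\in B(x,\delta)] + \Pro[\dint(X_n,Y_n)>\delta/2],
\]
where $B(x,\delta)$ denotes the open ball of radius $\delta$ around $x$. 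Exponential equivalence ensures that for every $\delta>0$ and every $L\in(0,\infty)$ one eventually has $\Pro[\dint(X_n,Y_n)>\delta]\leq e^{-L s_n}$, so on the logarithmic scale $s_n$ these error terms are negligible.

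For the upper bound on a closed set $C\subset M$, I apply the first inequality with $A=C$, combine the identity $\limsup_n s_n^{-1}\log(a_n+b_n) = \max\{\limsup_n s_n^{-1}\log a_n,\,\limsup_n s_n^{-1}\log b_n\}$ with the LDP upper bound for $(X_n)_{n\in\N}$ applied to the closed set $\overline{C^\delta}$, and absorb the second summand via exponential equivalence. This yields
\[
\limsup_{n\to\infty} s_n^{-1}\log\Pro[Y_n\in C] \,\leq\, -\inf_{x\in\overline{C^\delta}}\rate(x).
\]
Letting $\delta\downarrow 0$ then delivers the desired upper bound, provided that $\inf_{x\in\overline{C^\delta}}\rate(x) \to \inf_{x\in C}\rate(x)$.

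For the lower bound on an open set $O\subset M$, I fix $x\in O$ with $\rate(x)<\infty$, pick $\delta>0$ so small that $B(x,\delta)\subset O$, and apply the second inequality. The LDP lower bound for $(X_n)_{n\in\N}$ gives $\liminf_n s_n^{-1}\log\Pro[X_n\in B(x,\delta/2)] \geq -\rate(x)$ since $x$ itself lies in the open ball, while exponential equivalence kills the remaining term. A direct comparison of the two summands then shows $\liminf_n s_n^{-1}\log\Pro[Y_n\in B(x,\delta)] \geq -\rate(x)$, and since $B(x,\delta)\subset O$, taking the supremum over $x\in O$ (equivalently, the infimum of $\rate$) completes the lower bound.

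The main obstacle is the limit passage $\delta\downarrow 0$ in the upper bound, which is precisely where the goodness of $\rate$ enters decisively. To argue that $\inf_{x\in\overline{C^\delta}}\rate(x) \to \inf_{x\in C}\rate(x)$ as $\delta\downarrow 0$, I select near-minimizers $x_\delta\in\overline{C^\delta}$ with $\rate(x_\delta)\leq\inf_{y\in\overline{C^\delta}}\rate(y)+\delta$; these points eventually lie in a common sublevel set of $\rate$, which is compact by goodness, so a subsequence converges to some $x_0$. Because $\dint(x_\delta,C)\leq\delta$ and $C$ is closed, $x_0\in C$, and lower semicontinuity of $\rate$ then yields $\rate(x_0)\leq \liminf_\delta \rate(x_\delta)$, which closes the gap. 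Without goodness, a minimizing sequence in $\overline{C^\delta}$ could escape to infinity and this final step would break down entirely.
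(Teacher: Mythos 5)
Your proof is correct and follows the standard argument for this result (which the paper does not prove but rather cites from Dembo and Zeitouni, Theorem 4.2.13); the two set inclusions, the max-of-limsups identity, the comparison-of-summands step in the lower bound, and the compactness/lower-semicontinuity argument for $\inf_{\overline{C^\delta}}\rate\to\inf_C\rate$ are all exactly as in the standard reference. One small point worth tightening: in the inclusion $\{Y_n\in A\}\subset\{X_n\in A^\delta\}\cup\{\dint(X_n,Y_n)>\delta\}$ the enlargement $A^\delta$ is open, so on the event $\dint(X_n,Y_n)=\delta$ one only gets $\dint(X_n,A)\leq\delta$ rather than $<\delta$; replace the error term by $\Pro[\dint(X_n,Y_n)>\delta/2]$ (or use the closed enlargement, as Dembo--Zeitouni do), which costs nothing since exponential equivalence holds for every positive threshold.
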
  

\section{Large deviations results and techniques in the world of $\ell_p^n$-spaces}\label{sec:ldps in lp spaces}

In this section we shall present some of the large deviations results that have been obtained within the framework of $\ell_p^n$-balls, of course starting with the inspiring work of N.~Gantert, S.~S.~Kim, and K.~Ramanan \cite{GKR2017} on random one-dimensional projections of $\ell_p^n$-balls. As we have already mentioned in the introduction, in this setting, computations are facilitated by a probabilistic representation of the uniform distribution on an $\ell_p^n$-ball. More precisely, G. Schechtman and J. Zinn proved in  \cite{SchechtmanZinn} that a random vector $X$ uniformly distributed on $\B_p^n$, $1\leq p < \infty$, has a representation
\begin{align}\label{eq:schechtman-zinn representation} 
X \stackrel{\dint}{=} U^{1/n}\frac{(Z_1,\dots,Z_n)}{\|(Z_1,\dots,Z_n)\|_p},
\end{align}
where $U$ is uniformly distributed on $[0,1]$ and independent of $Z_1,\dots,Z_n$, which are iid $p$-generalized Gaussians, i.e., they have Lebesgue density
\begin{equation}\label{eq:pgaussian density}
f_p: \R\to[0,\infty),\qquad x\mapsto \frac{1}{2p^{1/p}\Gamma(1+1/p)} e^{-|x|^p/p}.
\end{equation}
This representation is powerful as it allows one to work with a random vector having independent coordinates; in contrast to the coordinates of $X$, which are obviously dependent. We refer the reader to the survey \cite{PTT2020} for a discussion of the probabilistic approach to the geometry of $\ell_p^n$-balls.

\subsection{Large deviations for random projections of $\ell_p^n$-balls}\label{subsec:ldps projections lp spaces}

In \cite{GKR2017}, N.~Gantert, S.~S.~Kim, and K.~Ramanan started analyzing distributional properties of the projections of high-dimensional random vectors in $\ell_p^n$-balls on the scale of large deviations. On the normal deviations scale, a central limit theorem for projections of convex bodies, i.e., for $X^{(n)}$ sampled from a log-concave measure (for instance the uniform measure on a convex body) that is also isotropic, had already been obtained by B.~Klartag \cite{K2007},  showing that for sufficiently large $n\in\N$ and most $\theta^{(n)}\in\SSS_2^{n-1}$, the projection of $X^{(n)}$ onto $\theta^{(n)}$ satisfies $  \langle X^{(n)}, \theta^{(n)} \rangle \approx \mathscr N(0,1)$ in some suitable sense.

We shall look at the results obtained in the annealed case, where a quite different large deviations behavior is revealed depending on whether $p< 2$ or $p\geq 2$. In particular, this shows an interesting sensitivity with respect to the underlying distribution of random vectors, which can be attributed to the stark difference in the geometry of $\ell_p^n$-balls in those two regimes. More precisely, the authors showed that if for each $n\in\N$, $\theta^{(n)}\in\SSS^{n-1}$ is a uniform random direction and $X^{(n,p)}$ is an independent random point uniformly distributed in the $\ell_p^n$-ball for some fixed $p\in[1,\infty]$, then the sequence of rescaled random variables (the normalized scalar random projections)
$$
W^{(n,p)}:=n^{{1\over p}-{1\over 2}}\langle X^{(n,p)},\theta^{(n)}\rangle = \frac{1}{n} \sum_{i=1}^n n^{1\over p} X_i^{(n,p)} n^{1\over 2}\theta^{(n)}_i,\quad n\in\N,
$$
satisfies an LDP with speed $n^{2p\over 2+p}$ if $p\in[1,2)$ and speed $n$ if $p\in[2,\infty]$, and with a certain rate function that also depends on the parameter $p$; for $p=\infty$, we use the convention $n^{1\over p}=1$. Below we only discuss the results for $p<\infty$ and refer to \cite[Section 8]{GKR2017} for a compilation of results for general product measures satisfying certain tail conditions (including the uniform measure on $\B_{\infty}^n$).

\subsubsection{Annealed LDP -- the case $p\in[2,\infty) $}

For $p\in[2,\infty)$, denoting by $\mu_p\in\mathcal M_1(\R)$ the $p$-Gaussian measure having Lebesgue density given by \eqref{eq:pgaussian density}, we consider the function
  \[
    \Phi_p(t_0,t_1,t_2) := \log \Bigg( \int_\R \int_\R e^{t_0z^2 + t_1zy + t_2|y|^p}\,\mu_2(\dint z)\mu_p(\dint y)  \Bigg), \qquad t_0,t_1,t_2\in\R, 
  \]
which satisfies $ \Phi_p(t_0,t_1,t_2)<\infty$ whenever $t_0<{1\over 2}$, $t_1\in\R$, and $t_2<{1\over p}$. The rate function that describes the large deviations behavior is given by the Legendre transform of $\Phi_p$, i.e., we define
  \[
    \rate_p(x) := \inf_{\tau_0>0,\tau_1\in\R,\tau_2>0\atop{\tau_0^{-1/2}\tau_1\tau_2^{-1/p} = x}} \Phi^*(\tau_0,\tau_1,\tau_2);
  \]
the infimum that appears already hints at an application of the contraction principle, i.e., the desired LDP will be contracted from another one. Let us now state the LDP for random one-dimensional projections corresponding to \cite[Theorem 2.2]{GKR2017}.

\begin{thmalpha}\label{thm:ldp annealed one dimensional projections p greater 2}
Let $p\in[2,\infty)$. Then the sequence of random variables $(W^{(n,p)})_{n\in\N}$ satisfies an LDP on $\R$ with speed $n$ and quasi-convex, symmetric, good rate function $\rate_p$.
\end{thmalpha}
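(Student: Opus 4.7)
The plan is to exploit the Schechtman--Zinn representation \eqref{eq:schechtman-zinn representation} together with the polar representation $\theta^{(n)} \stackrel{d}{=} g/\|g\|_2$ for a standard Gaussian $g = (g_1,\dots,g_n)$ independent of $(U, Z_1,\dots,Z_n)$. A short computation, in which the prefactor $n^{1/p-1/2}$ cancels precisely against the norms $\|Z\|_p$ and $\|g\|_2$, gives
\[
W^{(n,p)} \stackrel{d}{=} U^{1/n} \cdot \frac{A_n}{B_n^{1/2} \, C_n^{1/p}},
\]
where $A_n := \tfrac{1}{n} \sum_{i=1}^n g_i Z_i$, $B_n := \tfrac{1}{n} \sum_{i=1}^n g_i^2$, and $C_n := \tfrac{1}{n} \sum_{i=1}^n |Z_i|^p$. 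The analysis is thereby reduced to that of a smooth function of three empirical averages of an iid triple in $\R^3$, multiplied by an independent scalar factor $U^{1/n}$.

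The core step is the multidimensional Cram\'er theorem applied to the iid vectors $(g_i^2, g_i Z_i, |Z_i|^p)_{i\in\N}$. Integrating out $g$ first, one verifies that the log-moment generating function of a single summand equals the $\Phi_p$ of the statement, and that the origin lies in the interior of its finiteness domain for $p \geq 2$. Cram\'er's theorem thus yields an LDP for $(B_n, A_n, C_n)$ on $\R^3$ at speed $n$ with good convex rate function $\Phi_p^*$; sending $t_0$ or $t_2$ to $-\infty$ in the definition of $\Phi_p$ shows that $\Phi_p^*\equiv +\infty$ outside $(0,\infty)\times\R\times(0,\infty)$, which is exactly the region where the map $(b,a,c)\mapsto a/(b^{1/2}c^{1/p})$ is continuous. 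The contraction principle (Lemma \ref{lem:contraction principle}) then delivers an LDP at speed $n$ for $\widetilde W^{(n,p)}:=A_n/(B_n^{1/2}C_n^{1/p})$ with good rate function precisely equal to $\rate_p$ as defined in the statement.

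To incorporate the factor $U^{1/n}\in(0,1]$, note that it satisfies an elementary LDP at speed $n$ with good rate function $I_U(u) = -\log u$ on $(0,1]$, via $\Pro[U^{1/n}\leq t]=t^n$ for $t\in(0,1]$. Applying Lemma \ref{lem: ldp pairs of independent random objects} to the independent pair $(U^{1/n}, \widetilde W^{(n,p)})$ and contracting under the multiplication map $(u,w)\mapsto uw$ yields an LDP for $W^{(n,p)}$ at speed $n$ with rate function
\[
\rate_W(x) = \inf_{uw=x,\,u\in(0,1]}\bigl[-\log u + \rate_p(w)\bigr].
\]
To identify $\rate_W$ with $\rate_p$, one uses that $\rate_p$ is even (inherited from the evenness of $\Phi_p^*$ in $\tau_1$, which in turn follows from the joint symmetry $(Z,g)\leftrightarrow(-Z,-g)$) and non-decreasing on $[0,\infty)$. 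The latter holds because for $0<x<x'$ and any admissible $(\tau_0,\tau_1,\tau_2)$ in the variational problem defining $\rate_p(x')$, the rescaled triple $(\tau_0,(x/x')\tau_1,\tau_2)$ is admissible for $\rate_p(x)$ and has smaller $\Phi_p^*$-value, since $\Phi_p^*(\tau_0,\cdot,\tau_2)$ is even and convex and therefore minimized at $\tau_1=0$. These two properties together imply that the infimum defining $\rate_W(x)$ is attained at $u=1,\, w=x$, giving $\rate_W \equiv \rate_p$. Symmetry combined with monotonicity on $[0,\infty)$ then yields quasi-convexity of $\rate_p$, and goodness is preserved under contraction.

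The main obstacle I anticipate is the careful bookkeeping around the boundary $\{b=0\}\cup\{c=0\}$ of the effective domain of $\Phi_p^*$, where the contraction map degenerates; one must verify exponential tightness in the interior so that the contraction principle applies cleanly. A secondary subtlety is the quasi-convexity assertion in the theorem, which is not immediate from the convexity of $\Phi_p^*$ alone but follows, as sketched, from symmetry combined with the rescaling argument that yields monotonicity of $\rate_p$ on $[0,\infty)$.
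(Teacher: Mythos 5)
Your proposal is correct and follows essentially the same route as the paper: Schechtman--Zinn and the polar representation reduce $W^{(n,p)}$ to $U^{1/n}\cdot A_n/(B_n^{1/2}C_n^{1/p})$, multivariate Cram\'er in $\R^3$ gives the LDP with rate $\Phi_p^*$, and the contraction principle delivers $\rate_p$. The only place you diverge is the treatment of the $U^{1/n}$ factor, where the paper cites an external lemma from Gantert--Kim--Ramanan while you spell it out via the product LDP, contraction under multiplication, and the observation that $\rate_p$ is even and non-decreasing on $[0,\infty)$ (which also yields the asserted quasi-convexity, a point the paper states without proof); this expansion is correct and self-contained.
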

\begin{proof}[Idea of Proof.]
First of all, for the simple and not quite so interesting case $p=2$, we refer to \cite[Theorem 2.12]{GKR2017}; the rate function is then given by $x\mapsto -\frac{1}{2}\log(1-x^2)$ for $x\in(-1,1)$ and is $\infty$ otherwise. 

So assume now that $p\in(2,\infty)$. Using the specific form of the $p$-Gaussian measures (see \eqref{eq:pgaussian density}), one easily checks that for $t_0<{1\over 2}$, $t_1\in\R$, and $t_2<{1\over p}$,
  \[
    \Phi_p(t_0,t_1,t_2) = -\frac{1}{p}\log(1-pt_2) - \frac{1}{2}\log(1-2t_0) + \log\Bigg( \int_\R e^{\frac{1}{2}t_1^2(1-pt_2)^{-2/p}(1-2t_0)^{-1}y^2}\,\mu_p(\dint y) \Bigg),
  \]
  which is finite, since $p>2$. Thus, the effective domain $\mathscr D_{\Phi_p}=\{ z\in\R^3\,:\, \Phi_p(z)<\infty \}$ of $\Phi_p$ satisfies
  \[
    \mathscr D_{\Phi_p}^\circ = \Big(-\infty,\frac{1}{2}\Big) \times \R \times \Big(-\infty,\frac{1}{p}\Big) \ni 0.
  \]
  It now follows from a general version of Cram\'er's theorem \cite[Theorem 2.2.30]{DZ2010} (which applies to the multivariate setting if $0$ is an interior point of the effective domain of the cumulant generating function of the considered random vectors) that the sequence 
  \[
    S^{(n,p)} := \frac{1}{n} \sum_{i=1}^n \big(|Z_i^{(n)}|^2, Y_i^{(n,p)}Z_i^{(n)}, |Y_i^{(n,p)}|^p  \big), \quad n\in\N,
  \]
with 
  \[
    Y^{(n,p)}= (Y^{(n,p)}_1,\dots,Y^{(n,p)}_n) \text{ having iid coordinates with common distribution $\mu_p$} 
  \]
 and 
   \[
     Z^{(n)}=(Z^{(n)}_1,\dots,Z^{(n)}_n) \text{ being a vector of independent standard Gaussians},
   \]
satisfies an LDP in $\R^3$ with speed $n$ and good rate function given by the Legendre transform $\Phi^*$; note that $\Phi_p$ is the log-moment generating function of the summand in the definition of $S^{(n,p)}$. The desired LDP will now be contracted from the latter one via the continuous mapping $T_p:(0,\infty)\times \R \to \R$, $T_p(\tau_0,\tau_1,\tau_2):= \tau_0^{-1/2}\tau_1\tau_2^{-1/p}$. More precisely, one readily checks that
  \[
    T(S^{(n,p)}) = \frac{n^{1/p}}{n^{1/2}} \frac{\sum_{i=1}^nY^{(n,p)}_iZ_i^{(n)}}{\| Y^{(n,p)} \|_p\|Z^{(n)}\|_2},
  \]
and so one obtains an LDP for the sequence $T(S^{(n,p)})$, $n\in\N$, with speed $n$ and rate function $\rate_p$. Now it is only left to show that the LDP for the latter sequence carries over (with identical speed and rate) to our sequence $W^{(n,p)}$, $n\in\N$. For this we refer to \cite[Lemma 3.4]{GKR2017}.
\end{proof}

\begin{rmk}
We refer the reader to \cite[Theorem 1.1]{APT2018} for the case of high-dimensional projections discussed in the introduction.
\end{rmk}

\subsubsection{Annealed LDP -- the case $p\in[1,2) $}

As we shall see now, when we are in the case $p\in[1,2)$, the random projections display a completely different large deviations behavior. For $p\in[1,2)$, we define the function $\mathbb J_p:\R \to [0,\infty)$ via
  \[
  \mathbb J_p (x):= \frac{1}{r_p} |x|^{r_p} ,
  \]
with $r_p:= \frac{2p}{2+p}$. It is important to note that $r_p<1$ for $p<2$ and so the following LDP occurs at a different speed $n^{r_p}$, which is slower than the speed $n$ in the LDP for $p\geq 2$. The following result corresponds to  \cite[Theorem 2.3]{GKR2017}.

\begin{thmalpha}\label{thm:ldp annealed one dimensional projections p less than 2}
Let $p\in[1,2)$. Then the sequence of random variables $(W^{(n,p)})_{n\in\N}$ satisfies an LDP on $\R$ with speed $n^{r_p}$ and quasi-convex, symmetric, good rate function $\mathbb J_p$.
\end{thmalpha}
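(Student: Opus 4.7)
The plan parallels the proof of Theorem A, starting from the Schechtman--Zinn representation \eqref{eq:schechtman-zinn representation} together with the polar representation $\theta^{(n)} = Z^{(n)}/\|Z^{(n)}\|_2$ of a uniform direction (with $Z^{(n)}=(Z_1,\dots,Z_n)$ standard Gaussian, independent of $(U,Y^{(n,p)})$), but replacing the Gärtner--Ellis step by Theorem \ref{thm:cramer stretched}. This substitution is forced because for $p<2$ the product $Y_1 Z_1$ of a $p$-Gaussian and a Gaussian has no finite exponential moments, so a direct moment-generating-function approach is unavailable.

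After combining the two representations, $W^{(n,p)}$ factorizes as
\[
W^{(n,p)} = A_n\cdot T^{(n)},\quad A_n := U^{1/n}\cdot\frac{n^{1/p}}{\|Y^{(n,p)}\|_p}\cdot\frac{n^{1/2}}{\|Z^{(n)}\|_2},\quad T^{(n)} := \frac{1}{n}\sum_{i=1}^n Y_i^{(n,p)} Z_i^{(n)}.
\]
Since $|Y_1^{(n,p)}|^p$ and $Z_1^2$ have all exponential moments, standard Cramér bounds give $\Pro[|A_n-1|>\eta]\leq e^{-c(\eta)n}$ for every $\eta>0$. Because the target speed $n^{r_p}$ is strictly slower than $n$, the elementary splitting $\{|W^{(n,p)}-T^{(n)}|>\delta\}\subset\{|A_n-1|>\delta/M\}\cup\{|T^{(n)}|>M\}$, optimized in $M$, shows that $(W^{(n,p)})$ and $(T^{(n)})$ are exponentially equivalent at speed $n^{r_p}$; by Lemma \ref{lem:exponential equivalence same ldp} it therefore suffices to establish the LDP for $(T^{(n)})_{n\in\N}$.

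The heart of the argument is the tail asymptotic of a single product $\xi := Y_1^{(n,p)} Z_1^{(n)}$. Conditioning on $Y_1$ and applying Laplace's method to $\Pro[\xi>t] = 2\int_0^\infty \Pro[Z_1>t/y]\,f_p(y)\,\dint y$, the exponent $|y|^p/p + t^2/(2y^2)$ has a unique minimizer at $y_* = t^{2/(p+2)}$ with minimum value $(1/p + 1/2)\,t^{r_p} = t^{r_p}/r_p$. Thus
\[
\lim_{t\to\infty}t^{-r_p}\log\Pro[|\xi|>t] = -\frac{1}{r_p}.
\]
Since $\E[\xi]=0$, the $t$-dependent refinement of Theorem \ref{thm:cramer stretched} due to Gantert, Ramanan, and Rembart now gives, for every $a>0$ and (by symmetry of $\xi$) for every $a<0$,
\[
\lim_{n\to\infty}\frac{1}{n^{r_p}}\log\Pro\bigl[\sgn(a)\,T^{(n)}\geq |a|\bigr] = -\frac{|a|^{r_p}}{r_p} = -\mathbb J_p(a).
\]
From these two half-line asymptotics the full LDP on $\R$ at speed $n^{r_p}$ with good rate function $\mathbb J_p$ follows by standard one-dimensional arguments: exponential tightness is immediate since $\mathbb J_p(x)\to\infty$ as $|x|\to\infty$, the upper bound on a closed set avoiding the origin reduces to the two half-line estimates, and the lower bound on an open interval follows from continuity and strict monotonicity of $\mathbb J_p$ on $(0,\infty)$.

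The main technical obstacle is the Laplace analysis of the tail of $\xi$: because of the polynomial Laplace prefactor, $\xi$ does \emph{not} satisfy the uniform bound \eqref{eq:stretched exponential} with the sharp constant $c=1/r_p$, only asymptotically as $t\to\infty$. This is exactly the situation for which the $t$-dependent constant version of Theorem \ref{thm:cramer stretched} mentioned in the remark following its proof is required, and the careful bookkeeping of this prefactor, together with its compatibility with the ``one summand dominates'' philosophy, is the most delicate step. Quasi-convexity and symmetry of $\mathbb J_p$ are then immediate from its explicit formula.
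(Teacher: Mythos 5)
Your proposal is correct and follows essentially the same three-step structure as the paper's proof: establish the stretched-exponential tail asymptotic $\lim_{t\to\infty}t^{-r_p}\log\Pro[Y_1Z_1\geq t]=-1/r_p$ (you via Laplace's method, the paper via the $p$-Gaussian tail bounds \eqref{eq: pgaussian tail bounds}), apply the Gantert--Ramanan--Rembart / Arcones heavy-tail Cram\'er result to get the LDP for $\frac{1}{n}\sum_i Y_iZ_i$, and then transfer to $W^{(n,p)}$ by exponential equivalence at the slower speed $n^{r_p}$ using Cram\'er-speed concentration of the normalization factor. The only presentational difference is that you fold the $U^{1/n}$ and the $\ell_p$/$\ell_2$-norm normalizations into a single prefactor $A_n$, whereas the paper handles them in two stages via \cite[Lemma 3.4]{GKR2017}.
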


It is immediately clear that this result cannot be proved in the same way as the LDP in Theorem \ref{thm:ldp annealed one dimensional projections p greater 2}, because for $p<2$, $ \Phi_p(t_0,t_1,t_2)=\infty$ for $t_1\neq 0$; this already suggests that the LDP, if it exists, occurs at a different speed slower than $n$. 

\begin{proof}[Idea of Proof of Theorem \ref{thm:ldp annealed one dimensional projections p less than 2}.]
In a first step, one proves that if $Y\sim \mu_p$ and $Z\sim\mu_2$, and $Y$ is independent of $Z$, then it holds that
  \begin{equation}\label{eq:limiting behavior product random variables}
    \lim_{t\to\infty} \frac{1}{t^{r_p}} \log\big( \Pro[YZ\geq t] \big) = -\frac{1}{r_p};
  \end{equation}
  to prove this, one uses the $p$-Gaussian tail bounds saying that, for any $x\geq 0$,
  \begin{equation}\label{eq: pgaussian tail bounds}
    \frac{x}{x^p+1}e^{-x^p/p} \leq \int_x^{\infty} e^{-y^p/p}\,\dint y \leq   \frac{1}{x^{p-1}}e^{-x^p/p}.  
  \end{equation}
  In the second step, one uses \eqref{eq:limiting behavior product random variables} together wit a result of M.~A.~Arcones on large deviations of empirical processes with non-standard rates \cite[Theorem 2.1]{A2002} (alternatively one may use \cite[Theorem 1]{GRR2014} on LDPs for random variables with stretched exponential tails) to show that the random sequence
  \[
    V^{(n,p)}:= \frac{1}{n} \sum_{i=1}^n Y_i^{(n,p)}Z_i^{(n)}, \quad n\in\N,
  \] 
  with $Y^{(n,p)}= (Y^{(n,p)}_1,\dots,Y^{(n,p)}_n)$ having iid coordinates with distribution $\mu_p$ and $Z^{(n)}=(Z^{(n)}_1,\dots,Z^{(n)}_n)$ being a vector of independent standard Gaussians, satisfies an LDP 
  with speed $n^{r_p}$ and quasi-convex good rate function $\mathbb J_p$. 
  
  In a third step, one proves that the sequence 
  \[
  \frac{n^{1/p}}{n^{1/2}} \frac{\sum_{i=1}^nY^{(n,p)}_iZ_i^{(n)}}{\| Y^{(n,p)} \|_p\|Z^{(n)}\|_2}, \quad n\in\N, 
  \]
  which we considered above in the proof of Theorem \ref{thm:ldp annealed one dimensional projections p greater 2}, and $V^{(n,p)}$, $n\in\N$, are exponentially equivalent with speed $n^{r_p}$; thus they satisfy the same LDP. Again, as mentioned in the proof of Theorem \ref{thm:ldp annealed one dimensional projections p greater 2}, the LDP now carries over to $(W^{(n,p)})_{n\in\N}$ with speed $n^{r_p}$ and quasi-convex, symmetric, good rate function $\mathbb J_p$ (see \cite[Lemma 3.4]{GKR2017}).
\end{proof}

\begin{rmk}
Again, we refer the reader to \cite[Theorem 1.2]{APT2018} for the case of high-dimensional projections, which shall not be discussed here.
\end{rmk}

\subsection{Large deviations for $\ell_q$-norms of random vectors in $\ell_p$-balls}\label{subsec:ldps for qnorms of random vectors in lp spaces}

We now present some LDPs obtained by Z.~Kabluchko, J.~Prochno, and C.~Thäle for $\ell_q^n$-norms of random vectors chosen uniformly at random from $\ell_p^n$-balls \cite{KPT2019_I}; we shall skip the central and non-central limit theorems the authors obtained and focus on the large deviations part. As a motivation, let us start with the classical deviation results of G.~Schechtman and J.~Zinn \cite[Theorem 3]{SchechtmanZinn} (see \cite[Corollary 4]{SchechtmanZinn} for the normalized Lebesgue measure) and of A.~Naor in \cite[Theorem 2]{N2007}. The authors proved that if $1\leq p < q < \infty$, then
\begin{equation}\label{eq:SZConcentration}
\Pro\big[n^{1/p-1/q}\|Z\|_q >z\big] \leq e^{-cn^{p/q}z^p}
\end{equation}
for all $z>T(p,q)$, with $c:=1/T(p,q)$. Here $Z$ can be either uniformly distributed in $\B_p^n$ or distributed according to the cone measure on the boundary of $\B_p^n$. 

The authors of \cite{KPT2019_I} obtained an LDP counterpart, i.e., an asymptotic version of \eqref{eq:SZConcentration}. In fact, they identify $n^{p/q}$ in the exponent on the right-hand side of \eqref{eq:SZConcentration} as the speed of the LDP and $z^p$ as the asymptotically leading term of the rate function as $z\to\infty$. This LDP is in a sense optimal, and one can, contrary to \cite{SchechtmanZinn}, identify the exact constant $1/p$ in the exponent. It is important to note, however, that in general neither one of the results implies the other. Only for deviation parameters $z$ from a fixed compact interval, the result of \cite{KPT2019_I} is optimal and indeed stronger. The following result is based on large deviations techniques for sums of random variables with stretched exponential tails obtained in \cite{GRR2014} (i.e., a generalized version of Theorem \ref{thm:cramer stretched} above) and corresponds to \cite[Theorem 1.3]{KPT2019_I}.
\begin{thmalpha}\label{thm:LDPp<q}
Assume that $1\leq p<\infty$ and, for $n\in\N$, let $Z^{(n,p)}$ be uniformly distributed on $\B_p^n$. If $p<q<\infty$, then the sequence $\|{\bf Z}\|:=(n^{1/p-1/q}\|Z^{(n,p)}\|_q)_{n\in\N}$ satisfies an LDP in $\R$ with speed $n^{p/q}$ and good rate function
$$
\rate_{\bf \|Z\|} (z) := \begin{cases}
{1\over p}\big(z^q-M_p(q)\big)^{p/q} &: z\geq M_p(q)^{1/q}\\
\infty &: \text{otherwise}\,,
\end{cases}
$$
where $M_p(q):=\frac{p^{q/p}}{q+1}\,\frac{\Gamma(1+\frac{q+1}{p})}{\Gamma(1+\frac{1}{p})}$.
\end{thmalpha}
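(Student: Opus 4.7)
The plan is to start from the Schechtman--Zinn representation \eqref{eq:schechtman-zinn representation}, which gives
\[
n^{1/p-1/q}\|Z^{(n,p)}\|_q \;\stackrel{d}{=}\; U^{1/n}\cdot\frac{A_n}{B_n},
\qquad A_n := n^{-1/q}\|(Y_1,\dots,Y_n)\|_q,\quad B_n := n^{-1/p}\|(Y_1,\dots,Y_n)\|_p,
\]
with $U\sim\mathrm{Unif}[0,1]$ independent of the iid $p$-Gaussians $Y_1,\dots,Y_n$. Since $\Pr[U^{1/n}\le 1-\delta]=(1-\delta)^n$ decays at the full speed $n$, and since by Cram\'er's Theorem \ref{thm:cramer} applied to the iid sequence $(|Y_i|^p)$ (whose common distribution has all exponential moments and mean $M_p(p)=1$) the sequence $B_n$ satisfies a full LDP at speed $n$ with good rate function vanishing only at $1$, both $U^{1/n}$ and $B_n$ are exponentially equivalent to the constant $1$ at the slower speed $n^{p/q}$ (using $p/q<1$). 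By Lemma \ref{lem:exponential equivalence same ldp} it will therefore be enough to establish the LDP for $A_n$, and the conclusion for $n^{1/p-1/q}\|Z^{(n,p)}\|_q$ will follow.

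For $A_n$ the key point is that although $|Y_1|$ has all exponential moments, the variable $\xi_i := |Y_i|^q$ has only stretched exponential tails: using the $p$-Gaussian tail asymptotics (analogous to \eqref{eq: pgaussian tail bounds}) one checks
\[
\lim_{t\to\infty}\frac{1}{t^{p/q}}\log\Pr[\xi_1\ge t] \;=\; -\frac{1}{p},
\]
so that $\E[e^{s\xi_1}]=\infty$ for every $s>0$. This places us exactly in the setting of Theorem \ref{thm:cramer stretched} (more precisely its sharpening by Gantert--Ramanan--Rembart \cite{GRR2014}, where the tail constant is allowed to depend on $t$), with exponent $r=p/q\in(0,1)$ and prefactor $c=1/p$. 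Since the $\xi_i$ are iid with $\E[\xi_1]=M_p(q)$ (the Gamma-integral computation giving $M_p(q)=\frac{p^{q/p}}{q+1}\frac{\Gamma(1+(q+1)/p)}{\Gamma(1+1/p)}$), the theorem yields, for every $a>M_p(q)$,
\[
\lim_{n\to\infty}\frac{1}{n^{p/q}}\log\Pr\!\left[\tfrac{1}{n}\sum_{i=1}^n\xi_i\ge a\right]
\;=\;-\tfrac{1}{p}\bigl(a-M_p(q)\bigr)^{p/q}.
\]
The lower-tail contribution is exponentially negligible at speed $n^{p/q}$, because $\tfrac1n\sum \xi_i$ already satisfies a standard Cram\'er LDP at the faster speed $n^{p/q'}$ for any $p<q'<q$ (or one argues directly by Chebyshev, using that the $\xi_i$ have finite variance); this gives the rate $+\infty$ below the mean. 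Hence $S_n:=\tfrac1n\sum_{i=1}^n\xi_i$ satisfies an LDP at speed $n^{p/q}$ with good one-sided rate function $J(a)=\tfrac{1}{p}(a-M_p(q))^{p/q}\mathbbm{1}_{\{a\ge M_p(q)\}}+\infty\cdot\mathbbm{1}_{\{a<M_p(q)\}}$.

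The contraction principle (Lemma \ref{lem:contraction principle}) applied to the continuous map $a\mapsto a^{1/q}$ then transports this LDP to $A_n=S_n^{1/q}$, giving precisely the rate function $z\mapsto \tfrac1p(z^q-M_p(q))^{p/q}$ on $[M_p(q)^{1/q},\infty)$ and $+\infty$ elsewhere, i.e.\ $\rate_{\|\mathbf Z\|}$. Combining with the exponential equivalences for $U^{1/n}$ and $B_n$ completes the proof. The main obstacle I expect is the stretched-exponential Cram\'er step, for two reasons: first, the bare version stated in Theorem \ref{thm:cramer stretched} assumes the tail constant is independent of $t$, whereas extracting the exact prefactor $1/p$ in the rate really needs the $t$-dependent refinement of \cite{GRR2014}; second, passing from the LDP for $A_n$ to the LDP for the ratio $A_n/B_n$ via exponential equivalence must be justified carefully -- since the map is multiplicative, one first truncates $A_n$ onto a large compact set (permitted because $\rate_{\|\mathbf Z\|}$ is a good rate function vanishing only at $M_p(q)^{1/q}$) so that the difference $A_n/B_n-A_n = A_n(1-B_n)/B_n$ is uniformly controlled by the super-$n^{p/q}$-exponential concentration of $B_n$ at $1$.
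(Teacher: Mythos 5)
Your proposal follows essentially the same route as the paper: Schechtman--Zinn gives the representation $U^{1/n}A_n/B_n$, the stretched-exponential Cram\'er theorem (in the $t$-dependent Gantert--Ramanan--Rembart form) yields the LDP for $\tfrac1n\sum|Y_i|^q$ at speed $n^{p/q}$, the contraction $a\mapsto a^{1/q}$ transports it to $A_n$, and one then shows $U^{1/n}A_n/B_n$ is exponentially equivalent to $A_n$ because $U^{1/n}$ and $B_n$ concentrate at $1$ at the strictly faster speed $n$ while the overshoot of $A_n$ is controlled by the goodness of the rate function. The paper packages your ``truncate $A_n$ at level $K$'' step as a five-term $\varepsilon$--$\delta$ decomposition (bounding by $\Pro[A_n>\delta/\varepsilon]$ plus four speed-$n$ deviation events for $U^{1/n}$ and $B_n$ and then letting $\varepsilon\downarrow 0$), but it is the same argument. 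One small slip: your parenthetical ``or one argues directly by Chebyshev'' for the lower tail does not work — Chebyshev only gives polynomial decay, so $n^{-p/q}\log\Pro[S_n<a]\to 0$, not $-\infty$; the correct justification is the one you also give, namely standard Cram\'er at speed $n$ applied to $-\xi_i$, which has all positive exponential moments since $\xi_i\ge 0$.
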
 
\begin{proof}[Idea of Proof.]
The  case $p<q$ is more delicate than $q\leq p$ because of the lack of finite exponential moments. Thus, instead of Cram\'er's theorem, one uses a version for random variables with heavy tails. More precisely, the following result due to N.~Gantert, K.~Ramanan, and F.~Rembart \cite[Theorem 1]{GRR2014} (which is actually reformulated in a suitable way here) is used: let $X,X_1,X_2,\ldots$ be non-negative iid random variables, and assume that there are constants $r\in(0,1)$, $t_0>0$, and slowly varying functions $c_1,c_2,b:(0,\infty)\to(0,\infty)$ such that
$$
c_1(t)e^{-b(t)t^r}\leq \Pro[X\geq t]\leq c_2(t)e^{-b(t)t^r}\,,\qquad t\geq t_0\,.
$$
Then, the sequence of random variables ${1\over n}\sum\limits_{i=1}^nX_i$, $n\in\N$, satisfies an LDP on $\R$ with speed $b(n)n^r$ and good rate function
$$
\rate(z) = \begin{cases}
(x-\E X)^r &: z\geq \E X\\
\infty &: \text{otherwise}\,.
\end{cases}
$$

For each $n\in\N$, let $Y_1,\ldots,Y_n\sim \mu_p$ be independent $p$-generalized Gaussians and define
$$
S_n:=\sum_{i=1}^n|Y_i|^q\,.
$$
In a first step, the authors apply Cram\'er's theorem for stretched exponentials from \cite{GRR2014} to prove an LDP for the random sequence ${\bf S}:=(S_n)_{n\in\N}$. Using the $p$-Gaussian tail bounds from \eqref{eq: pgaussian tail bounds},
one can check that, for sufficiently large $z\in\R$,
$$
c_1(z) e^{-b(z)z^{p/q}} \leq \Pro[|Y_1|^q\geq z]\leq c_2(z) e^{-b(z)z^{p/q}},
$$
with appropriate slowly varying functions $c_1,c_2,b:(0,\infty)\to(0,\infty)$, where $b(z)$ satisfies $b(z)\to{1\over p}$, as $z\to\infty$. Thus, $\bf S$ satisfies an LDP on $\R$ with speed $n^{p/q}$ and good rate function
$$
\rate_{\bf S}(z) = \begin{cases}
{1\over p}(z-M_p(q))^{p/q} &: z\geq M_p(q)\\
+\infty &:\text{otherwise}\,.
\end{cases}
$$
In a second step, one applies the contraction principle of Lemma \ref{lem:contraction principle} with the continuous function $F(z)=z^{1/q}$, $z\in\R$, to conclude that  $(F(S_n))_{n\in\N}=((S_n)^{1/q})_{n\in\N}$ satisfies an LDP on $\R$ with speed $n^{p/q}$ and good rate function $\rate_{\bf \|Z\|} $ from the statement of the theorem.

In a third and last step, one shows that $((S_n)^{1/q})_{n\in\N}$ is exponentially equivalent to the sequence $(n^{1/p-1/q}\|Z^{(n,p)}\|_q)_{n\in\N}$ of interest. Here one uses the Schechtman--Zinn probabilistic representation from \eqref{eq:schechtman-zinn representation} to obtain the distributional identity
$$
n^{1/p-1/q}\|Z^{(n,p)}\|_q \overset{d}{=}U^{1/n}{\Big({1\over n}\sum\limits_{i=1}^n|Y_i|^q\Big)^{1/q}\over\Big({1\over n}\sum\limits_{i=1}^n|Y_i|^p\Big)^{1/p}}\,. 
$$
Now, for fixed $\delta>0$ and $\varepsilon\in(0,1)$, one can show that 
\begin{align*}
&\Pro\left[\,\left|\Big({1\over n}\sum\limits_{i=1}^n|Y_i|^q\Big)^{1/q}-U^{1/n}{\Big({1\over n}\sum\limits_{i=1}^n|Y_i|^q\Big)^{1/q}\over\Big({1\over n}\sum\limits_{i=1}^n|Y_i|^p\Big)^{1/p}}\right|>\delta\right]\cr
&\leq \Pro\bigg[\Big({1\over n}\sum\limits_{i=1}^n|Y_i|^q\Big)^{1/q}>{\delta\over\varepsilon}\bigg] + \Pro\big[U^{1/n}<\sqrt{1-\varepsilon}\,\big]+\Pro\bigg[{1\over n}\sum\limits_{i=1}^n|Y_i|^p>(1-\varepsilon)^{-p/2}\bigg]\cr
&\hspace{4.6cm}+\Pro\big[U^{1/n}>\sqrt{1+\varepsilon}\,\big]+\Pro\bigg[{1\over n}\sum\limits_{i=1}^n|Y_i|^p<(1+\varepsilon)^{-p/2}\bigg]\,.
\end{align*}
From Cram\'er's theorem \cite[Theorem 2.2.30]{DZ2010} and \cite[Lemma 3.3]{GKR2017} one deduces that the four last terms decay exponentially with speed $n$,
while the exponential asymptotic of the first term has already been determined above. As a consequence, and since $p/q<1$, one has that
\begin{align*}
&\limsup_{n\to\infty}{1\over n^{p/q}}\log\Pro\left[\,\left|\Big({1\over n}\sum\limits_{i=1}^n|Y_i|^q\Big)^{1/q}-U^{1/n}{\Big({1\over n}\sum\limits_{i=1}^n|Y_i|^q\Big)^{1/q}\over\Big({1\over n}\sum\limits_{i=1}^n|Y_i|^p\Big)^{1/p}}\right|>\delta\right]\cr
&\qquad\qquad\leq\limsup_{n\to\infty}{1\over n^{p/q}}\log\Pro\bigg[\Big({1\over n}\sum\limits_{i=1}^n|Y_i|^q\Big)^{1/q}>{\delta\over\varepsilon}\bigg]\,,
\end{align*}
which is $-{1\over p}(({\delta\over\varepsilon})^q-M_p(q))^{p/q}$ if $\delta/\varepsilon\geq M_p(q)^{1/q}$ and $-\infty$ otherwise. Letting $\varepsilon\to 0$, one concludes that the above limit is equal to $-\infty$. Thus, the two sequences $(n^{1/p-1/q}\|Z^{(n,p)}\|_q)_{n\in\N}$ and $((S_n)^{1/q})_{n\in\N}$ are exponentially equivalent, thus obeying the same LDP (see Lemma \ref{lem:exponential equivalence same ldp}). 
\end{proof}

Let us now present the LDP in the regime $1\leq q\leq p<\infty$, which corresponds to  \cite[Theorem 1.2]{KPT2019_I} . As already pointed out, in this case exponential moments exist and make the large deviations analysis rather straight forward, but we will also see that the two regimes exhibit a different behavior from a rate function and speed point of view. In particular, the form of the rate function already hints at an application of the large deviations result for pairs of independent random objects presented in Lemma \ref{lem: ldp pairs of independent random objects}. Let us note that the case $p = q$ is rather immediate (see \cite[Theorem 1.4]{KPT2019_I}) and so below we work with $q<p$; the special case $p = \infty$ is treated in (see \cite[Theorem 1.5]{KPT2019_I}).

\begin{thmalpha}\label{thm:LDPp>q}
Assume that $1\leq p <\infty$ and, for $n\in\N$, let $Z^{(n,p)}$ be uniformly distributed on $\B_p^n$. If $1\leq q<p$, the sequence $\|{\bf Z}\|:=(n^{1/p-1/q}\|Z^{(n,p)}\|_q)_{n\in\N}$ satisfies an LDP with speed $n$ and good rate function
$$
\ratej_{\|\textbf{Z }\|} (z)  := \begin{cases}
\inf\limits_{z=z_1z_2\atop z_1,z_2\geq 0}[ {\rate}_1(z_1)+{\rate}_2(z_2)] &: z\geq 0\\
\infty &:\text{otherwise}\,.
\end{cases}
$$
Here
$$
{\rate}_1(z) := \begin{cases}
-\log z &: z\in(0,1]\\
\infty &:\text{otherwise}
\end{cases}
\qquad\text{and}\qquad
\rate_2(z) :=
 \begin{cases}
\inf\limits_{x\geq 0,y>0\atop{x^{1/q}y^{-1/p}}=z} \Lambda^*(x,y) &: z\geq 0\\
\infty &: z<0\,,
\end{cases}
$$
where $\Lambda^*$ is the Legendre transform of the function
\[
\Lambda(t_1,t_2):=\log \int_{\R} e^{t_1|s|^q+t_2|s|^p}\frac{e^{-|s|^p/p}}{2p^{1/p}\Gamma(1+1/p)}\dif s\,.
\]
\end{thmalpha}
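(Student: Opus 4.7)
The plan is to execute the same strategy that worked for Theorem B, but with Cram\'er's theorem (which is now available, since $q<p$) replacing the stretched-exponential version. The Schechtman--Zinn representation \eqref{eq:schechtman-zinn representation} gives
\[
n^{1/p-1/q}\|Z^{(n,p)}\|_q \stackrel{d}{=} U^{1/n} \cdot R_n,
\qquad
R_n := \frac{\bigl(\tfrac{1}{n}\sum_{i=1}^n |Y_i|^q\bigr)^{1/q}}{\bigl(\tfrac{1}{n}\sum_{i=1}^n |Y_i|^p\bigr)^{1/p}},
\]
with $U\sim\Uni[0,1]$ independent of the iid $p$-Gaussians $Y_1,\dots,Y_n$. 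I will separately obtain an LDP for the two factors $U^{1/n}$ and $R_n$, use independence to LDP-ify the pair, and close via the contraction principle applied to the multiplication map.

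First I would prove an LDP for $(U^{1/n})_{n\in\N}$ at speed $n$. For $z\in(0,1]$ one has $\Pro[U^{1/n}\leq z]=z^n=e^{-n(-\log z)}$, and for $z>1$ the probability is $1$, while $\Pro[U^{1/n}\geq z]=1-z^n$ for $z\in[0,1]$. A direct computation shows that $(U^{1/n})_{n\in\N}$ satisfies an LDP on $\R$ at speed $n$ with good rate function $\rate_1$ as in the statement; this is the routine step.

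Next, I would turn to $R_n$. Because $q<p$, the function $\Lambda$ displayed in the theorem is finite in a full neighborhood of the origin in $\R^2$ (the Gaussian factor $e^{-|s|^p/p}$ controls both the $t_1|s|^q$ and $t_2|s|^p$ terms as long as $t_2<1/p$ and $t_1\in\R$), so $0$ is an interior point of its effective domain. Hence the multivariate Cram\'er theorem (e.g.\ \cite[Theorem 2.2.30]{DZ2010}) implies that
\[
S_n := \Bigl(\tfrac{1}{n}\sum_{i=1}^n|Y_i|^q,\, \tfrac{1}{n}\sum_{i=1}^n|Y_i|^p\Bigr),\quad n\in\N,
\]
satisfies an LDP on $\R^2$ at speed $n$ with good rate function $\Lambda^*$. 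Since $R_n=T(S_n)$ for the map $T(x,y):=x^{1/q}y^{-1/p}$, which is continuous on $[0,\infty)\times(0,\infty)$, the contraction principle (Lemma \ref{lem:contraction principle}) yields an LDP for $(R_n)_{n\in\N}$ at speed $n$ with good rate function $\rate_2$. The only subtlety here is the singularity of $T$ at $y=0$; I would handle it by checking that $\Lambda^*(x,0)=+\infty$ (letting $t_2\to-\infty$ in the Legendre transform forces the supremum to blow up) so that the infimum in the definition of $\rate_2$ can be restricted to $\{y>0\}$, or, alternatively, by a short exponential tightness argument that truncates away from $y=0$ using the Cram\'er LDP for $\tfrac{1}{n}\sum|Y_i|^p$.

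Finally, since $U$ and $(Y_1,\dots,Y_n)$ are independent, the pair $(U^{1/n},R_n)$ has product distribution, and Lemma \ref{lem: ldp pairs of independent random objects} gives an LDP for $(U^{1/n},R_n)_{n\in\N}$ on $\R^2$ at speed $n$ with good rate function $(z_1,z_2)\mapsto \rate_1(z_1)+\rate_2(z_2)$. Applying part (b) of the same lemma to the continuous multiplication map $(z_1,z_2)\mapsto z_1z_2$ produces the claimed LDP for $U^{1/n}R_n$, and hence for $n^{1/p-1/q}\|Z^{(n,p)}\|_q$, with the good rate function $\ratej_{\|\bZ\|}(z)=\inf_{z_1z_2=z,\,z_1,z_2\geq 0}[\rate_1(z_1)+\rate_2(z_2)]$ stated in the theorem. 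The main obstacle in this plan is not any single deep step, but rather the careful bookkeeping around the non-compactness of the map $T$ near $\{y=0\}$ and the verification that the rate functions produced by the successive applications of the contraction principle really agree with the explicit Legendre-transform expressions in the statement.
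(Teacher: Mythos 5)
Your proposal is correct and follows essentially the same route as the paper's proof: Schechtman--Zinn to split off $U^{1/n}$, multivariate Cram\'er for $S_n=(\frac1n\sum|Y_i|^q,\frac1n\sum|Y_i|^p)$ since the log-moment generating function $\Lambda$ is finite near $0$ when $q<p$, contraction via $(x,y)\mapsto x^{1/q}y^{-1/p}$ after noting $\Lambda^*$ is $+\infty$ off $(0,\infty)^2$, and then the product LDP together with a final contraction through multiplication. The one detail you flag — the singularity of the contraction map at $y=0$ — is handled in the paper exactly as you propose, by observing that $\Lambda^*$ is infinite there so the LDP effectively lives on $[0,\infty)\times(0,\infty)$.
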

\begin{proof}[Idea of Proof.]
First of all, one employs again the Schechtmann--Zinn probabilistic representation from \eqref{eq:schechtman-zinn representation} to obtain, for each $n\in\N$,
\[
n^{1/p-1/q}\|Z^{(n,p)}\|_q\stackrel{\text{d}}{=}n^{1/p-1/q}U^{1/n}\frac{\|Y^{(n,p)}\|_q}{\|Y^{(n,p)}\|_p},
\]
where $Y^{(n,p)}=(Y^{(n,p)}_1,\ldots,Y^{(n,p)}_n)$ is a vector of iid $p$-generalized Gaussians, and $U$ is an independent random variable uniformly distributed on $[0,1]$. Then, for $n\in\N$, one defines the random quantities
\[
W_n := n^{1/p-1/q}\frac{\|Y^{(n,p)}\|_q}{\|Y^{(n,p)}\|_p} \qquad\text{and}\qquad S_n:= \frac{1}{n}\sum_{i=1}^n \big(|Y_i^{(n,p)}|^q,|Y_i^{(n,p)}|^p\big)\,.
\]
Now, for $t=(t_1,t_2)\in\R^2$,
\[
\Lambda(t_1,t_2)= \log\Big( \E \Big[e^{\langle t, (|Y_1^{(n,p)}|^q,|Y_1^{(n,p)}|^p) \rangle} \Big]\Big)= \log \int_0^\infty e^{t_1s^q+(t_2-1/p)s^p}\frac{\dif s}{2p^{1/p}\Gamma(1+1/p)}
\]
is the log-moment generating function of $S_n$. Its effective domain is $\R\times(-\infty,1/p)$ if $q<p$ and $\{(t_1,t_2)\in\R^2\,:\,t_1+t_2<1/p\}$ for $q=p$. Thus, by Cram\'er's theorem \cite[Theorem 2.2.30]{DZ2010}, the sequence ${\bf S}:=(S_n)_{n\in\N}$ satisfies an LDP in $\R^2$ with speed $n$ and good rate function $\Lambda^*$ being the Legendre transform of $\Lambda$. One can check that $\Lambda^*(t_1,t_2)= + \infty$ if $t_1\leq 0$ or $t_2\leq 0$. This implies that the sequence ${\bf S}$ also satisfies an LDP on $[0,\infty)\times(0,\infty)$ with the same good rate function $\Lambda^*$.

In a next step, one defines the continuous function
\[
F:[0,\infty)\times(0,\infty)\to\R,\quad (x,y)\mapsto x^{1/q}y^{-1/p},
\]
and notes that, for each $n\in\N$, $W_n\overset{d}{=}F(S_n)$. The contraction principle of Lemma \ref{lem:contraction principle} implies that the random sequence ${\bf W}:=(W_n)_{n\in\N}$ satisfies an LDP on $\R$ with speed $n$ and good rate function
\[
\rate_{\bf W}(z) =
 \begin{cases}
\inf\limits_{x\geq 0,y>0\atop{x^{1/q}y^{-1/p}}=z}\Lambda^*(x,y) &: z\geq 0\\
+\infty &: z<0\,.
\end{cases}
\]
In the third and last step, one considers the random variables $V_n:=(U^{1/n},W_n)$, $n\in\N$, and uses again the fact (see \cite[Lemma 3.3]{GKR2017}) that the sequence ${\bf U}:=(U^{1/n})_{n\in\N}$ satisfies an LDP on $\R$ with speed $n$ and rate function
$$
\rate_{\bf U}(z) = \begin{cases}
-\log z &: z\in(0,1]\\
\infty &:\text{otherwise}\,.
\end{cases} 
$$
Now, since $U^{1/n}$ and $W_n$ are independent, the sequence ${\bf V}:=(V_n)_{n\in\N}$ satisfies an LDP on $\R^2$ with speed $n$ and good rate function
$$
\rate_{\bf V} (z_1,z_2) := \rate_{\bf U}(z_1)+\rate_{\bf W}(z_2)\,,\qquad (z_1,z_2)\in\R^2\,,
$$
by Lemma \ref{lem: ldp pairs of independent random objects} (see also \cite[Lemma 2.2]{A2002}). Applying once more the contraction principle, but this time with the continuous function
$F:\R^2\to\R$, $(x,y)\mapsto xy$, one concludes that the sequence $U^{1/n}W_n\overset{d}{=}n^{1/p-1/q}\|Z^{(n,p)}\|_q$, $n\in\N$, satisfies an LDP on $\R$ with speed $n$ and good rate function
$$
\ratej_{\|\textbf{Z }\|} (z) = \inf_{z=z_1z_2}{\rate}_{\bf V} (z_1,z_2) = \begin{cases}
\inf\limits_{z=z_1z_2\atop z_1,z_2\geq 0}{\rate}_{\bf V} (z_1,z_2) &: z\geq 0\\
\infty &:\text{otherwise}.
\end{cases}
$$
This completes the argument.
\end{proof}

\subsection{Moderate deviations for $\ell_q$-norms of random vectors in $\ell_p$-balls}

In \cite{KPT2019_II}, Z.~Kabluchko, J.~Prochno, and C.~Thäle complemented the LDPs presented above by MDPs (which occur on scalings between the one of the CLT and LDP). More precisely, we refer the reader to \cite[Theorem C]{KPT2019_II}, where the result is actually obtained for more general distributions on the $\ell_p^n$-balls as introduced by F.~Barthe, O.~Gu{\'e}don, S.~Mendelson, and A.~Naor in \cite{BartheGuedonEtAl}. We shall not sketch the proof of the MDP and instead refer to \cite{KPT2019_II} directly.

Let us recall that an MDP is formally nothing else than an LDP, but with important differences in the behavior of the two principles. For instance, while LDPs provide estimates on the scale of a law of large numbers, MDPs describe the probabilities at scales between a law of large numbers and a distributional limit theorem (like a CLT). Moreover, while the rate function in an LDP depends in a subtle way on the distribution of the underlying random variables, the rate function in an MDP, in typical situations, is only parametric in the variance that occurs and is inherited from a CLT.
Let us recall that a sequence $(X_n)_{n\in\N}$ of random vectors in $\R^d$ ($d\in\N$) satisfies an LDP with {\emph speed} $s_n$ and good rate function $\rate:\R^d\to[0,\infty]$ if and only if
\begin{equation*}
\begin{split}
-\inf_{x\in A^\circ}\rate(x) &\leq\liminf_{n\to\infty}s_n^{-1}\log\Pro[X_n\in A]\leq\limsup_{n\to\infty}s_n^{-1}\log\Pro[X_n\in A]\leq-\inf_{x\in\overline{A}}\rate(x)
\end{split}
\end{equation*}
for all measurable $A\subset\R^d$. One says that a sequence $(X_n)_{n\in\N}$ satisfies an MDP if the speed sequence $(s_n)_{n\in\N}$ is given by $s_n=b_n\sqrt{n}$ with a positive sequence $(b_n)_{n\in\N}$ satisfying $b_n=\omega(1)$ and $b_n=o(\sqrt{n})$, where for two sequences $(x_n)_{n\in\N}$ and $(y_n)_{n\in\N}$ we use the Landau notation $x_n=o(y_n)$ if $\lim_{n\to\infty} \frac{x_n}{y_n}=0$ and $x_n=\omega(y_n)$ if $\lim_{n\to\infty}|\frac{x_n}{y_n}|=\infty$. We refer the reader to \cite[Chapter 3.7]{DZ2010} and especially part (c) of the remark after Theorem 3.7.1 for further details on terminology.

\begin{thmalpha}\label{thm:MDP}
Let $0<p< \infty$ and $0<q<\infty$ with $q<p$. Let $(b_n)_{n\in\N}$ be a sequence of positive real numbers satisfying $b_n=\omega(1)$ and $b_n=o(\sqrt{n})$.
For each $n\in\N$, let $Z^{(n,p)}$ be uniformly distributed on $\B_p^n$.
Then the sequence of random variables
  \begin{align*}
    {\sqrt{n}\over b_n}\bigg({n^{{1/p}-{1/q}}\over M_p(q)^{1/q}}\|Z^{(n,p)}\|_q-1\bigg), \qquad n\in\N,
  \end{align*}
satisfies an MDP with speed $b_n^2$ and good rate function $\rate(t)={t^2\over 2\sigma^2}$, $t\in\R$, where 
  \[
    \sigma^2 := {1\over q^2}\bigg({\Gamma({1\over p})\Gamma({2q+1\over p})\over\Gamma({q+1\over p})^2}-1\bigg)-{1\over p} \qquad\text{and}\qquad M_p(q):=\frac{p^{q/p}}{q+1}\,\frac{\Gamma(1+\frac{q+1}{p})}{\Gamma(1+\frac{1}{p})}.
  \]
\end{thmalpha}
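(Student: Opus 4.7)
My plan is to reduce the MDP to a standard MDP for sums of iid random variables via the Schechtman–Zinn representation \eqref{eq:schechtman-zinn representation}, and then to combine a joint MDP on $\mathbb{R}^2$ with a moderate-deviations version of the delta method. First, using \eqref{eq:schechtman-zinn representation}, I would write
\[
n^{1/p-1/q}\|Z^{(n,p)}\|_q \stackrel{d}{=} U^{1/n}\,\frac{A_n^{1/q}}{B_n^{1/p}},\qquad A_n := \tfrac{1}{n}\sum_{i=1}^n |Y_i|^q,\qquad B_n := \tfrac{1}{n}\sum_{i=1}^n |Y_i|^p,
\]
with $Y_1,\ldots,Y_n$ iid $p$-generalized Gaussians and $U\sim\mathrm{Unif}[0,1]$ independent. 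A short Gamma-function computation gives $\mathbb{E}A_n=M_p(q)$ and $\mathbb{E}B_n=1$.

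Next, because $q<p$ and the density of $|Y_1|$ decays like $e^{-|s|^p/p}$, both $|Y_1|^q$ and $|Y_1|^p$ admit a finite moment generating function in a whole neighborhood of the origin. A standard multivariate MDP for iid sums (see \cite[Theorem 3.7.1]{DZ2010}, or \cite{EL2003}) therefore applies to $(|Y_i|^q - M_p(q),\,|Y_i|^p - 1)$, yielding that
\[
T_n := \frac{\sqrt{n}}{b_n}\bigl(A_n - M_p(q),\,B_n - 1\bigr)
\]
satisfies an MDP on $\mathbb{R}^2$ at speed $b_n^2$ with good Gaussian rate function governed by the covariance matrix $\Sigma$ of $(|Y_1|^q,|Y_1|^p)$.

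The third and core step is a moderate-deviations delta method for $g(x,y):=x^{1/q}/y^{1/p}$ at the base point $(M_p(q),1)$, where $g(M_p(q),1)=M_p(q)^{1/q}$. Writing
\[
\frac{\sqrt{n}}{b_n}\!\left(\frac{A_n^{1/q}}{B_n^{1/p}} - M_p(q)^{1/q}\right) = \nabla g(M_p(q),1)\cdot T_n + R_n,
\]
one needs to argue that the second-order remainder $R_n$ is exponentially negligible at speed $b_n^2$; this follows from exponential tightness of $T_n$ combined with the fact that $b_n/\sqrt{n}\to 0$. Applying the contraction principle (Lemma~\ref{lem:contraction principle}) to the linear map $(u,v)\mapsto\nabla g(M_p(q),1)\cdot(u,v)$, and dividing the result by $M_p(q)^{1/q}$, produces an MDP at speed $b_n^2$ with quadratic good rate function whose variance equals
\[
\frac{1}{q^2 M_p(q)^2}\mathrm{Var}\,|Y_1|^q \;-\; \frac{2}{pq\,M_p(q)}\,\mathrm{Cov}(|Y_1|^q,|Y_1|^p) \;+\; \frac{1}{p^2}\mathrm{Var}\,|Y_1|^p.
\]
Identifying this with the stated $\sigma^2$ is a bookkeeping computation: using that $|Y_1|^p/p\sim\mathrm{Gamma}(1/p,1)$ gives $\mathrm{Var}\,|Y_1|^p=p$, while the identity $\Gamma(1+(q+1)/p)=\tfrac{q+1}{p}\Gamma((q+1)/p)$ yields $\mathbb{E}\,|Y_1|^{p+q}=(q+1)M_p(q)$, hence $\mathrm{Cov}(|Y_1|^q,|Y_1|^p)=qM_p(q)$, and a final Gamma-identity reduces $\mathbb{E}|Y_1|^{2q}/M_p(q)^2$ to $\Gamma(1/p)\Gamma((2q+1)/p)/\Gamma((q+1)/p)^2$.

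Finally, I would absorb the factor $U^{1/n}$ by exponential equivalence (in the MDP sense analogous to Lemma~\ref{lem:exponential equivalence same ldp}). Since $|U^{1/n}-1|=O(|\log U|/n)$, the contribution of this factor to the rescaled quantity is of order $|\log U|/(b_n\sqrt{n})$, which is negligible at speed $b_n^2$ because $\log U$ has all exponential moments and $b_n\sqrt{n}\to\infty$; this transfers the MDP from $A_n^{1/q}/(M_p(q)^{1/q}B_n^{1/p})-1$ to the target sequence. The main obstacle is step three: carrying out the delta method on the MDP scale with exponentially sharp control of the second-order remainder, as opposed to the much easier probabilistic control that suffices for the CLT.
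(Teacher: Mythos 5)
The paper does not prove this theorem; it states it and refers directly to the original source \cite{KPT2019_II}, so there is no in-paper proof to compare against. Evaluating your proposal on its own merits: the strategy (Schechtman--Zinn, a bivariate MDP for the iid sums $A_n,B_n$, an MDP-scale delta method for $g(x,y)=x^{1/q}y^{-1/p}$, and exponential equivalence for the $U^{1/n}$ factor) is sound, and your Gamma-function bookkeeping is correct: $\E|Y_1|^q=M_p(q)$, $\E|Y_1|^p=1$, $\Var|Y_1|^p=p$, $\Cov(|Y_1|^q,|Y_1|^p)=qM_p(q)$, and $\E|Y_1|^{2q}/M_p(q)^2=\Gamma(1/p)\Gamma((2q+1)/p)/\Gamma((q+1)/p)^2$ all check out and produce exactly the stated $\sigma^2$. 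The exponential-moment verification needed for the bivariate MDP is also correct: $\E e^{t_1|Y_1|^q+t_2|Y_1|^p}<\infty$ for $t_2<1/p$ and all $t_1$, so $0$ lies in the interior of the effective domain.

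The one place where the argument is terser than it should be is exactly the step you flag yourself. Writing $|R_n|\lesssim \tfrac{b_n}{\sqrt n}\|T_n\|^2$ implicitly assumes the Hessian of $g$ is uniformly bounded along the segment between $(A_n,B_n)$ and $(M_p(q),1)$; since $g$ has a singularity as $y\downarrow 0$, you must first localize to the event $\{|A_n-M_p(q)|\le\epsilon,\ |B_n-1|\le\epsilon\}$, whose complement has probability $e^{-cn}=o(e^{-Cb_n^2})$ for every $C$ because $b_n^2=o(n)$, and only then run the exponential-tightness argument: for any $C>0$ pick $M_C$ with $\limsup_n b_n^{-2}\log\Pro[\|T_n\|>M_C]<-C$, note the threshold $\sqrt{\delta\sqrt n/(Cb_n)}$ eventually exceeds $M_C$, and conclude $\limsup_n b_n^{-2}\log\Pro[|R_n|>\delta]=-\infty$. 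Making those two localization steps explicit closes the gap; the rest of the proposal, including the treatment of $U^{1/n}$ via $\Pro[-\log U>\delta b_n\sqrt n]=e^{-\delta b_n\sqrt n}$ and $\sqrt n/b_n\to\infty$, is fine.
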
 

In particular, Theorem \ref{thm:MDP} implies that, for all $t\in\R$,
$$
\lim_{n\to\infty}{1\over b_n^2}\log\Pro\Bigg[{\sqrt{n}\over b_n}\bigg({n^{{1/ p}-{1/q}}\over M_p(q)^{1/q}}\|Z^{(n,p)}\|_q-1\bigg)\geq t\Bigg] = -{t^2\over 2\sigma^2}.
$$

\begin{rmk}
Note that already for $p=q$ the core term leading the MDP vanishes and therefore, one does not obtain an MDP with a non-trivial rate function. 
\end{rmk}

\subsection{Large deviations, moderate deviations, and the KLS conjecture}

Arguably, the major open problem in asymptotic geometric analysis is the Kannan--Lov\'asz--Simonovits (KLS) conjecture. Its origin lies in theoretical computer science, where it arose in the study of sampling algorithms for high-dimensional convex bodies. A specific situation of interest is to design an algorithm efficiently computing the volume of a $d$-dimensional convex body. Such an algorithm is given a convex body $K\subseteq \R^d$ and a quality parameter $\varepsilon\in(0,\infty)$. The body $K$ is represented by a membership oracle, which, for each $x\in \R^d$, can decide whether or not $x\in K$. The algorithm returns a number $V:=V(K,\varepsilon)\in\R$ such that $(1-\varepsilon)\vol_d(K) \leq V \leq (1+\varepsilon)\vol_d(K)$. The efficiency of the designed algorithm is measured in terms of the number of arithmetic operations and calls to the membership oracle. 

While it is known through the works of I.~B\'ar\'any and Z.~F\"uredi \cite{BF1987} and of G.~Elekes \cite{Elekes1986} that deterministic algorithms are inefficient, there are reasonable randomized algorithms that can accurately compute the volume of convex bodies with high probability in polynomial time (see, e.g., the work of M.~Dyer, A.~Frieze, and R.~Kannan \cite{DFK1991}). The construction of this randomized algorithm is connected to an isoperimetric inequality for log-concave probability measures on $\R^d$ (see \cite{AGB2015} for an introduction to the KLS conjecture and \cite{VEMP2010}  for a detailed explanation on its connection with problems in computer science). The constant appearing in this isoperimetric  inequality, referred to as Cheeger's constant, is of particular interest and directly linked to the KLS conjecture, which can be stated as follows (see \cite[Theorem 1.1]{BH1997}):

\bigskip

\textbf{KLS Conjecture.} \textit{There exists an absolute constant $C\in(0,\infty)$ such that for all $d\in\N$, every centered random vector $X$ with log-concave distribution, and any locally Lipschitz function $f:\R^d\to\R$ such that $f(X)$ is of finite variance,
\[
\Var[f(X)] \leq C\, \lambda_X^2\, \E\big[\| \nabla f(X)\|_2^2\big],
\]
where $\lambda_X^2:=\sup_{\theta\in\SSS^{d-1}}\E\langle X,\theta\rangle^2$.}

\bigskip

Using the localization lemma, R.~Kannan, L.~Lov\'asz, and M.~Simonovits proved the conjecture with a factor $(\E\|X\|_2)^2$ instead of $\lambda_X^2$ (see \cite{KLS1995}), and later S.~Bobkov \cite{BOB2007} improved this estimate. The currently best known bound has recently been obtained by B.~Klartag \cite{K2023}, who showed that the conjecture in $\R^d$ holds true up to a factor of $\sqrt{\log d}$. 

Even though the bound has been improved considerably in recent years, it is not clear whether the conjecture is indeed true. In \cite{APT2020}, D.~Alonso-Guti\'{e}rrez, J.~Prochno, and C.~Thäle established a new connection between the KLS conjecture and the study of LDPs and MDPs for isotropic log-concave random vectors, providing a potential route to disproving the conjecture.   

As mentioned before, an MDP is nothing else than an LDP for a differently normalized sequence of random variables, i.e., if a sequence $(t_n^{-1}X_n)_{n\in\N}$ of random variables satisfies an LDP with some speed and some rate function, an LDP for the sequence of random variables $\tilde{t}_n^{-1}X_n$ with $\tilde{t}_n\to\infty$ and $\tilde{t}_n=o(t_n)$ is referred to as an MDP.

The following result corresponds to \cite[Theorem A]{APT2020} and establishes a connection between the KLS conjecture and moderate/large deviations principles. Essentially, it is a consequence of the fact, known from \cite{AGB2015,GM1987,L1994}, that the KLS conjecture would imply a sharp concentration bound for the Euclidean norm of isotropic log-concave random vectors. Below, for two positive sequences of quantities, we write $\approx$ to denote equivalence up to absolute constants. 

\begin{thmalpha}\label{thm:KLS}
Let $1\leq k_n\leq n$ be a sequence of integers such that $k_n=\omega(1)$ and let $(\xi_n)_{n=1}^\infty$ be a sequence of isotropic log-concave random vectors in $\R^{k_n}$. Consider a sequence of random variables $X_n=\frac{\Vert \xi_n \Vert_2}{\sqrt{k_n}}$, $n\in\N$, which satisfies \eqref{eq:LDPdefinition} with speed $s_n$ and rate function $\rate$. Assume that one of the following two conditions is satisfied: 
\begin{itemize}
\item[(a)] $s_n=o(\sqrt{k_n})$ and $\rate$ is non-singular, i.e., $\rate(x)\neq\rate_0(x):=
\begin{cases}
0&: x=1\\
\infty&:\text{otherwise}.
\end{cases}$
\item[(b)] $s_n\approx \sqrt{k_n}$ and $\inf\limits_{t>t_0}\frac{\inf_{x\in(t,\infty)}\rate(x)}{t}=0$ for some absolute constant $t_0\in(1,\infty)$.
\end{itemize}
Then the KLS conjecture is false.
\end{thmalpha}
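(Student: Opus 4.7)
The plan is to derive a contradiction from the assumed validity of the KLS conjecture. The key input is the well-known consequence (see e.g.\ \cite{AGB2015,GM1987,L1994}) that under KLS every isotropic log-concave random vector $\xi$ in $\R^d$ enjoys the sharp exponential thin-shell bound
\[
\Pro\bigg[\,\bigg|\frac{\|\xi\|_2}{\sqrt{d}}-1\bigg|>t\,\bigg]\;\leq\;C\,e^{-c\sqrt{d}\,t},\qquad t>0,
\]
with absolute constants $C,c\in(0,\infty)$. Applied to $\xi_n$ and $X_n=\|\xi_n\|_2/\sqrt{k_n}$, this yields in particular
\[
\Pro[X_n>t]\;\leq\;C\,e^{-c\sqrt{k_n}(t-1)}\qquad(t>1),
\]
together with the analogous lower-tail estimate. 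In either regime, the strategy is to use the LDP lower bound to exhibit probabilities that are asymptotically larger than this thin-shell bound, thereby contradicting KLS.

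For case (a), apply the LDP upper bound to the whole space $\R$ to see that $\inf\rate=0$; consequently the assumption $\rate\neq\rate_0$ delivers a point $x^\star\neq 1$ with $\rate(x^\star)<\infty$, and by symmetry we may assume $x^\star>1$. For any $\varepsilon\in(0,x^\star-1)$, the LDP lower bound applied to the open interval $(x^\star-\varepsilon,x^\star+\varepsilon)$ yields, for all sufficiently large $n$,
\[
\Pro[X_n>x^\star-\varepsilon]\;\geq\;\exp\bigl(-s_n(\rate(x^\star)+\varepsilon)\bigr).
\]
Combining this with the thin-shell upper bound, taking logarithms and dividing by $\sqrt{k_n}$ gives
\[
-\frac{s_n}{\sqrt{k_n}}\bigl(\rate(x^\star)+\varepsilon\bigr)\;\leq\;\frac{\log C}{\sqrt{k_n}}-c(x^\star-1-\varepsilon).
\]
Letting $n\to\infty$ and using $s_n=o(\sqrt{k_n})$ forces $0\leq -c(x^\star-1-\varepsilon)<0$, the desired contradiction.

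For case (b), the hypothesis produces, for each $\delta>0$, a level $t_\delta>t_0$ with $\inf_{x>t_\delta}\rate(x)<\delta t_\delta$, and $t_\delta$ may be chosen arbitrarily large as $\delta\downarrow 0$. The LDP lower bound on the open half-line $(t_\delta,\infty)$ gives, for all large $n$,
\[
\Pro[X_n>t_\delta]\;\geq\;\exp(-2\delta t_\delta\,s_n),
\]
while the thin-shell bound yields $\Pro[X_n>t_\delta]\leq C\exp(-c\sqrt{k_n}(t_\delta-1))$. The scaling assumption $s_n\approx\sqrt{k_n}$ provides an absolute constant $c_2$ with $s_n\leq c_2\sqrt{k_n}$ for large $n$, so comparing logarithms yields $2c_2\delta t_\delta\geq c(t_\delta-1)-o(1)$ as $n\to\infty$. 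Dividing by $t_\delta$ and then letting $\delta\downarrow 0$ (with $t_\delta\to\infty$) forces $0\geq c>0$, again a contradiction. The only substantial ingredient is the sharp KLS-implied thin-shell concentration; granted this, both cases reduce to a clean asymptotic comparison of the LDP with that concentration, the remaining bookkeeping being only the symmetric treatment of $x^\star<1$ in (a) via the lower-tail thin-shell bound.
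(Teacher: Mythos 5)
Your proof is correct and follows essentially the same strategy as the paper's: compare the LDP lower bound on half-lines away from $1$ against the KLS-implied thin-shell upper tail $\Pro[|\,\|\xi_n\|_2/\sqrt{k_n}-1\,|>t]\leq Ce^{-ct\sqrt{k_n}}$, and let the speed comparison $s_n$ versus $\sqrt{k_n}$ do the work. In case (a) the paper phrases the conclusion as deriving $\rate\equiv\rate_0$ directly (showing $\rate=\infty$ off $\{1\}$ and $\rate(1)=0$), whereas you pick a witness $x^\star\neq 1$ with $\rate(x^\star)<\infty$ and obtain a numerical contradiction; these are equivalent. One small remark on case (b): your parenthetical that ``$t_\delta$ may be chosen arbitrarily large as $\delta\downarrow 0$'' is not actually guaranteed by the hypothesis (the infimum could be witnessed at bounded $t$), but it is also not needed — since $t_\delta>t_0>1$ one has $(t_\delta-1)/t_\delta\geq(t_0-1)/t_0>0$ bounded away from zero, which already forces $2c_2\delta\geq c(t_0-1)/t_0$ for all $\delta>0$, the desired contradiction. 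With that minor clarification your argument coincides with the paper's.
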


For a sequence of random vectors $\xi_n\in\R^{k_n}$, $n\in\N$, as given in Theorem \ref{thm:KLS}, it is well known that $\E\|\xi_n\|_2/\sqrt{k_n}\to 1$, as $n\to\infty$. Against this light, the natural scale for a law of large numbers, and hence an LDP, is $\sqrt{k_n}$, while scales with $t_n=o(\sqrt{k_n})$ and $t_n=\omega(1)$ correspond to an MDP. Thus, Theorem \ref{thm:KLS} establishes a connection between LDPs or MDPs for isotropic log-concave random vectors and the KLS conjecture, showing that by constructing a sequence of random vectors $\xi_n$, $n\in\N$, such that $X_n$, $n\in\N$, follows an LDP or MDP with an appropriate speed and/or rate function would disprove the variance and hence the KLS conjecture. Moreover, the authors show that (rescaled) crosspolytopes constitute a critical case in Theorem \ref{thm:KLS} as they just fail to satisfy condition (b) there. 

\begin{rmk}
Beyond the connection to the KLS conjecture, the authors of \cite{APT2020} prove a number of other results (see  \cite[Theorems B and C]{APT2020}), more precisely, they study moderate deviations for the Euclidean norm of random orthogonally projected random vectors in $\ell_p^d$-balls. This leads to a number of interesting observations: (1) for $p\geq 2$ the rate function in the MDP undergoes a phase transition, depending on whether the scaling is below the square-root of the subspace dimensions or comparable; (2) for $1\leq p<2$ and comparable subspace dimensions, the rate function again displays a phase transition depending on its growth relative to $d^{p/2}$.  
\end{rmk}

\begin{proof}[Idea of Proof of Theorem \ref{thm:KLS}.]
First, we recall the following fact (see \cite[Theorem 1.15]{AGB2015}, \cite{GM1987} or \cite{L1994}: if the KLS conjecture were true, then there would exist an absolute constant $C\in(0,\infty)$ such that, for every $n\in\N$ and all $t>0$,
  $$
    \Pro\left[\left|\frac{\Vert \xi_n\Vert_2}{\sqrt{k_n}}-1\right|>t\right]\leq 2e^{-Ct\sqrt{k_n}}.
  $$
But this would mean that
\begin{eqnarray*}
\log \Pro\left[\frac{\Vert \xi_n\Vert_2}{\sqrt{k_n}}-1>t\right]&\leq&\log\Pro\left[\left|\frac{\Vert \xi_n\Vert_2}{\sqrt{k_n}}-1\right|>t\right]\leq \log 2-Ct\sqrt{k_n}
\end{eqnarray*}
and
\begin{eqnarray*}
\log \Pro\left[\frac{\Vert \xi_n\Vert_2}{\sqrt{k_n}}<1-t\right]&\leq&\log\Pro\left[\left|\frac{\Vert \xi_n\Vert_2}{\sqrt{k_n}}-1\right|>t\right]\leq \log 2-Ct\sqrt{k_n},
\end{eqnarray*}
and thus, for every $n\in\N$ and $t>0$,
\begin{eqnarray*}
\frac{\log \Pro\left[\frac{\Vert \xi_n\Vert_2}{\sqrt{k_n}}>1+t\right]}{s_n}&\leq&\frac{\log 2}{s_n}-\frac{Ct\sqrt{k_n}}{s_n}
\end{eqnarray*}
and
\begin{eqnarray*}
\frac{\log \Pro\left[\frac{\Vert \xi_n\Vert_2}{\sqrt{k_n}}<1-t\right]}{s_n}&\leq&\frac{\log 2}{s_n}-\frac{Ct\sqrt{k_n}}{s_n}.
\end{eqnarray*}
Taking the limit inferior as $n\to\infty$, and using the assumption that $\sqrt{k_n}^{-1}\Vert X_n \Vert_2$ satisfies \eqref{eq:LDPdefinition} {with speed $s_n$ and rate function $\rate$}, one makes the following observation: if $s_n=o(\sqrt{k_n})$, then, for every $t>0$,
$$
-\inf_{x\in(1+t,\infty)}\rate(x)\leq -\infty \quad \text{and}\quad -\inf_{x\in(-\infty,1-t)}\rate(x)\leq -\infty,
$$
which implies that $\rate$ is identically equal to $\infty$ on $\R\setminus\{1\}$. Since
$$
\Pro\left[\frac{\Vert \xi_n\Vert_2}{\sqrt{k_n}}\in\R\right]=1,
$$
one has that, for every $n\in\N$,
$$
\frac{\log \Pro\left[\frac{\Vert \xi_n\Vert_2}{\sqrt{k_n}}\in\R\right]}{s_n}=0.
$$
Hence, taking the limit as $n\to\infty$,
$$
0 = -\inf_{x\in\R}\rate(x)=-\rate(1),
$$
{which implies that $\rate$ would coincide with the singular rate function $\rate_0$, a contradiction to the assumption that $\rate\neq\rate_0$.}

If otherwise
$s_n\approx\sqrt{k_n}$, then, for any $t>0$, there exists a constant $C_1\in(0,\infty)$ such that
$$
-\inf_{x\in(t+1,\infty)}\rate(x)\leq -C_1t,
$$
which implies that
$$
\frac{\inf_{x\in(t+1,\infty)}\rate(x)}{t+1}>C_1{t\over t+1}
$$
for all $t>0$. This is equivalent to the fact that, for every $t>1$,
$$
\frac{\inf_{x\in(t,\infty)}\rate(x)}{t}>C_1{t-1\over t}.
$$
Now, for every $t_0>1$, one has that, for all $t\in[t_0,\infty)$, $C_1{t-1\over t}>C_2(t_0)$. Thus, for any $t_0>1$,
$$
\inf_{t>t_0}\frac{\inf_{x\in (t,\infty)} \rate(x)}{t} \geq {C_2(t_0)}.
$$
However, this is a contradiction to the assumption.
\end{proof}

\section{Large deviations results and techniques in the world of Schatten classes}\label{sec:ldps in schatten classes}

The Schatten $p$-class $\Sc_p$ ($0<p\leq \infty$) is the collection of all compact operators between Hilbert spaces for which the sequence of their singular values belongs to the sequence space $\ell_p$, including the important cases of the nuclear or trace class operators ($p=1$) and Hilbert--Schmidt operators ($p=2$). Having been introduced by R.~Schatten in \cite[Chapter 6, pp. 71]{Schatten1960}, back then referring to a compact operator as a completely continuous one, they are among the most prominent unitary operator ideals studied in functional analysis today. In fact, he worked in the more general setting of symmetric gauge functions and the unitarily invariant crossnorms on the subalgebra of finite rank operators generated by them. 
The very origin of Schatten's work can be traced back to his paper \cite{S1946} and the subsequent works \cite{SvN1946_II,SvN1946_III} with J.~von Neumann, studying nuclear operators on Hilbert spaces; on Banach spaces this had later been considered by A.~F.~Ruston \cite{R1951} and on locally convex spaces by A.~Grothendieck \cite{G233}. Before Schatten's monograph appeared, \emph{spaces of completely continuous operators} had
received comparably little attention in the literature. Today, Schatten classes provide the mathematical framework for modern applied mathematics around low-rank matrix recovery (the non-commutative analogue to the classical compressed sensing approach) and completion (see, e.g., \cite{CR2009, CDK2015, FR2013, KS2018, RT2011}), which is only one reason for the increased interest in their structure in recent years. The Schatten class $\Sc_p$ may be considered a non-commutative version of the classical $\ell_p$ sequence space and both share various structural characteristics, but while there are several similarities on different levels, there are also many differences in their analytic, geometric, and probabilistic behavior. In fact, there are several situations in which the matrix spaces are harder to handle and where arguments are considerably more delicate and complicated.

From both the local and the global point of view the study of Schatten classes has a long tradition in geometric functional analysis and their structure has been investigated intensively in the past 50 years. For instance, Y.~Gordon and D.~R.~Lewis proved that $\Sc_p$ for $p\neq 2$ fails to have local unconditional structure and therefore does not have an unconditional basis \cite{GL1974}. This answered a question of S.~Kwapie\'n and A.~Pe{\l}czy\'nski, who had previously shown in \cite{KP1970} that the Schatten trace class $\Sc_1$ (naturally identified with $\ell_2\otimes_{\pi}\ell_2$) as well as $\Sc_\infty$ are not isomorphic to subspaces with an unconditional basis. 
In 1974, N.~Tomczak-Jaegermann succeeded in \cite{TJ1974} to prove that $\Sc_1$ has Rademacher cotype $2$. More recently, H.~K\"onig, M.~Meyer, and A.~Pajor obtained in \cite{KMP1998} that the isotropic constants of the unit balls in $\Sc_p^N$ are bounded above by absolute constants for all $1\leq p\leq \infty$. The concentration of mass properties of unit balls in the Schatten $p$-classes were studied by O.~Gu\'edon and G.~Paouris in \cite{GP2007} and J.~Radke and B.-H.~Vritsiou were able to prove the thin-shell conjecture for $\Sc_\infty^N$ \cite{RV2016}. In \cite{HPV2017}, A.~Hinrichs, J.~Prochno, and J.~Vyb\'iral succeeded in determining the asymptotic behavior of entropy numbers for natural identities $\Sc_p^N\hookrightarrow\Sc_q^N$ up to absolute constants for all $0<p,q\leq \infty$ and in \cite{HPV2021_gelfand} studied the sequences of Gelfand numbers for natural embeddings $\Sc_p^N\hookrightarrow\Sc_q^N$ of Schatten classes, providing asymptotically sharp bounds (up to constants depending on $p$ and $q$) for almost all regimes, thereby extending classical results for finite-dimensional $\ell_p$ sequence spaces by E.~D.~Gluskin \cite{G81,G83} and  A.~Y.~Garnaev and E.~D.~Gluskin \cite{GG1984} to the non-commutative setting. Their results have then been complemented by J.~Prochno and M.~Strzelecki in \cite{PS2022}. In a series of papers, Z.~Kabluchko, J.~Prochno, and C.~Th\"ale computed the precise asymptotic volume and the volume ratio for Schatten $p$-classes for $0<p\leq \infty$ \cite{KPT2020_volume_ratio} and proved a Schechtman--Schmuckenschl\"ager type result for the volume of intersections of unit balls \cite{KPT2020_ensembles}.

Given that there had already been obtained a number of large deviations results for $\ell_p^n$-spaces, it was only natural from the functional analytic point of view to study LDPs in the non-commutative setting. This was done by Z.~Kabluchko, J.~Prochno, and C.~Th\"ale, who obtained Sanov-type large deviations for the empirical spectral measures of random matrices in Schatten unit balls \cite{KPT2019_sanov}. In this section, we shall focus on the results obtained there.  

\subsection{Self-adjoint and non self-adjoint Schatten $p$-classes}

Let us describe the general setting, going beyond the mere case of Schatten $p$-classes, in which the results of \cite{KPT2019_sanov} were obtained.

Consider $\beta\in\{1,2,4\}$ and let $\mathscr H_n(\mathbb{F}_\beta)$ be the collection of all self-adjoint $(n\times n)$-matrices with entries from the skew field $\mathbb{F}_\beta$, where $\mathbb{F}_1=\R$, $\mathbb{F}_2=\C$, or $\mathbb{F}_4=\Ham$, the set of Hamiltonian quaternions. The standard Gaussian distribution on $\mathscr H_n(\mathbb{F}_\beta)$ is known as the GOE (Gaussian orthogonal ensemble) if $\beta=1$, the GUE (Gaussian unitary ensemble) if $\beta=2$, and the GSE (Gaussian symplectic ensemble) if $\beta=4$. By $\lambda_1(A),\ldots,\lambda_n(A)$ we shall denote the (real) eigenvalues of a matrix $A$ from $\mathscr H_n(\mathbb{F}_\beta)$ and consider the following Schatten-type unit ball, which can be regarded as a matrix analogue to the classical $\ell_p^n$-balls and is defined as
$$
\B_{p,\beta}^n := \left\{A\in \mathscr H_n(\mathbb{F}_\beta):\sum_{j=1}^n|\lambda_j(A)|^p \leq 1\right\},\qquad \beta \in\{1,2,4 \}\quad\text{and}\quad  0 < p \leq \infty.
$$
If $p=\infty$, then the sum in the definition is replaced by $\max\{|\lambda_j(A)|:j=1,\ldots,n\} $ . The boundary of the matrix ball $\B_{p,\beta}^n$ is denoted by
$$
\Sph_{p,\beta}^{n-1} := \partial \B_{p,\beta}^n  = \left\{A\in \mathscr H_n(\mathbb{F}_\beta):\sum_{j=1}^n|\lambda_j(A)|^p = 1\right\}
$$
with the same convention if $p=\infty$.
The space $\mathscr H_n(\mathbb{F}_\beta)$ admits a natural scalar product $\langle A, B\rangle = \Re \Tr (A B^*)$ so that it becomes a Euclidean space. The corresponding Riemannian volume on $\mathscr H_n(\mathbb{F}_\beta)$ is denoted by $\text{vol}_{\beta,n}$ and this measure coincides with the suitably normalized $({\beta n(n-1)\over 2} + n)$-dimensional Hausdorff measure on $\mathscr H_n(\mathbb{F}_\beta)$ as follows directly from the area-coarea formula. One can therefore define the uniform distribution on $\B_{p,\beta}^n$. The cone probability measure on $\Sph_{p,\beta}^{n-1}$ is defined as follows: the cone measure of a Borel set $K\subseteq \Sph_{p,\beta}^{n-1}$ is
$$
\frac{\text{vol}_{\beta,n} (\cup_{\lambda\in [0,1]} \lambda K)}{\text{vol}_{\beta,n}(\B_{p,\beta}^n)}.
$$

After having discussed the self-adjoint case, we now turn to the non self-adjoint one, where the eigenvalues are replaced by the singular values. For an $(n\times n)$-matrix $A\in\Mat_n(\mathbb{F}_\beta)$ with entries from the skew field $\mathbb{F}_\beta$ with $\beta\in\{1,2,4\}$, we denote by $s_1(A),\ldots,s_n(A)$ the singular values of $A$. If $\beta=1$ or $\beta=2$ these are the eigenvalues of $\sqrt{AA^*}$, while in the Hamiltonian case $\beta=4$ we refer to \cite[Corollary E.13]{AGZ2010} for a formal definition. For $0<p\leq \infty$ the Schatten $p$-ball is defined as
$$
\SSS\B_p^n := \Big\{A\in\Mat_n(\mathbb{F}_\beta):\sum_{j=1}^n|s_j(A)|^p\leq 1\Big\}
$$
with the convention that the sum is replaced by $\max\{|s_j|:j=1,\ldots,n\}$ in the case that $p=\infty$. As in the self-adjoint case, $\Mat_n(\mathbb{F}_\beta)$ can be supplied with the structure of a Euclidean space in such a way that the 
$\beta n^2$-dimensional Hausdorff measure restricted to $\SSS\B_p^n$ is finite and can thus be normalized to a probability measure. Moreover, one can also define the cone probability measure on the boundary $\partial\SSS\B_p^n$ of $\SSS\B_p^n$.

\subsection{A non-commutative Schechtman--Zinn probabilistic representation}

Let us start with the Schechtman--Zinn type probabilistic representations for random matrices chosen uniformly at random from $\B_{p,\beta}^n$ and $\SSS\B_p^n$ for $0<p<\infty$, which are of independent interest. The basic fact the authors of \cite{KPT2019_sanov} rely on is indeed a distributional representation of the empirical measure of a random matrix $Z_n$ chosen from one of the unit balls. We elaborate in slightly more detail for the case of $\B_{p,\beta}^n$, before we present the two results forming the respective backbone.

So assume for each $n\in\N$ we are given a random matrix $Z_n$ uniformly distributed in the ball $\B_{p,\beta}^n$, $\beta\in\{1,2,4\}$. Then one has a probabilistic representation of the empirical measure of $Z_n$ given as
\begin{equation*}\label{eq:schechtman_zinn_uniform}
\mu_n =
\frac 1n \sum_{i=1}^n \delta_{n^{1/p}\lambda_i(Z_n)}
\eqdistr
\frac 1n \sum_{i=1}^n \delta_{n^{1/p} U^{1 /\ell}{X_{i,n}\over\|X_n\|_p}},
\end{equation*}
where $\eqdistr$ denotes equality in distribution and
\begin{itemize}
\item[(a)] $X_n = (X_{1,n},\dots,X_{n,n})$ is a random vector on $\R^n$ with joint Lebesgue density of the form
\begin{equation}\label{eq:distribution of (X_1,...,X_n)}
\R^n \ni (x_1,\ldots,x_n) \mapsto \frac 1 {C_{n,\beta,p}} e^{-n\sum\limits_{i=1}^n|x_i|^p} \prod_{1\leq i < j \leq n}|x_i-x_j|^{\beta},
\end{equation}
with a suitable normalization constant $C_{n,\beta,p}\in(0,\infty)$ depending on $n$, $\beta$, and $p$,
\item[(b)] $U$ is a random variable with uniform distribution on $[0,1]$ that is independent of $X_n$,
\item[(c)] $\ell := \ell(n,\beta) :=  {\beta n(n-1)\over 2} + n$ is the (real) dimension of $\mathscr H_n(\mathbb{F}_\beta)$. \footnote{Let us note that there is a typo in \cite[Item (c)]{KPT2019_sanov}, where $\ell={\beta n(n-1)\over 2} + n\beta$, while it should be ${\beta n(n-1)\over 2} + n$ instead.}
\end{itemize}
Similarly, if $Z_n$ is distributed according to the cone measure on $\Sph_{p,\beta}^{n-1}$, then
\begin{equation*}\label{eq:schechtman_zinn_cone}
\mu_n
=
\frac 1n \sum_{i=1}^n \delta_{n^{1/p}\lambda_i(Z_n)}
\eqdistr
\frac 1n \sum_{i=1}^n \delta_{n^{1/p}{ X_{i,n}\over \|X_n\|_p}}.  
\end{equation*}

We now present the main probabilistic representations used to derive the level--2 LDPs in Schatten classes. The first result is essentially \cite[Corollary~4.3]{KPT2020_ensembles}.

\begin{proposition}\label{cor:EigenvaluesProbabRep}
Let $0 <  p < \infty$ and $Z_n$ be uniformly distributed in $\B_{p,\beta}^n$, $\beta\in\{1,2,4\}$. Consider a uniform random permutation $\pi$ of $\{1,\ldots,n\}$, which is independent from $Z_n$. Then
\[
\big(\lambda_{\pi(1)}(Z_n),\ldots,\lambda_{\pi(n)}(Z_n)\big) \stackrel{d}{=}U^{1\over n+m}{X_n\over\|X_n\|_p}\qquad\text{with}\qquad m:=m(n,\beta):={\beta n(n-1)\over 2},
\]
where $U$ is uniformly distributed on $[0,1]$, and, independently of $U$, the vector $X_n\in\R^n$ has joint density 
proportional to
\[
\R^n\ni(x_1,\ldots,x_n)\mapsto e^{-\sum\limits_{i=1}^n|x_i|^p}\prod_{1\leq i<j\leq n}|x_i-x_j|^\beta,
\]
where the proportionality constant only depends on $\beta$, $p$, and $n$. Also, if $Z_n\in\Sph_{p,\beta}^{n-1}$ is distributed according to the cone probability measure, one has that
$$
\big(\lambda_{\pi(1)}(Z_n),\ldots,\lambda_{\pi(n)}(Z_n)\big) \eqdistr {X_n\over\|X_n\|_{p}}.
$$
\end{proposition}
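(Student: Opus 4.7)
The plan is to derive both distributional identities by combining the Weyl integration formula for each $\beta\in\{1,2,4\}$ with a polar decomposition in the $\ell_p^n$-norm. The first step is to recall that for any unitarily-invariant Lebesgue density $\rho(A)=\widetilde\rho(\lambda_1(A),\ldots,\lambda_n(A))$ on $\mathscr H_n(\mathbb F_\beta)$ with symmetric $\widetilde\rho$, the coarea formula applied to the eigenvalue map gives that the joint density on $\R^n$ of the randomly permuted eigenvalue vector $(\lambda_{\pi(1)}(A),\ldots,\lambda_{\pi(n)}(A))$ is proportional to $\widetilde\rho(x)\,h_\beta(x)$, where $h_\beta(x):=\prod_{i<j}|x_i-x_j|^\beta$ and the proportionality constant depends only on $n$ and $\beta$. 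Specializing to $Z_n$ uniform on $\B_{p,\beta}^n$, the permuted eigenvalue vector then has density on $\R^n$ proportional to $\mathbbm 1_{\{\sum_i|x_i|^p\leq 1\}}(x)\,h_\beta(x)$.

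Once this reduction is in place, the second step is to apply polar coordinates with respect to $\|\cdot\|_p$ to the random vector $Y$ having this density. Since $h_\beta$ is positively homogeneous of degree $m=\beta n(n-1)/2$, a single change of variables yields, for any bounded test functions $g$ and $\phi$, a factorization of the form
\begin{equation*}
\E\bigl[g(\|Y\|_p)\,\phi(Y/\|Y\|_p)\bigr] \;\propto\; \int_0^1 g(r)\,r^{n+m-1}\,\dint r \;\cdot\; \int_{\Sph_p^{n-1}}\phi(\theta)\,h_\beta(\theta)\,\dint\sigma_p(\theta),
\end{equation*}
where $\sigma_p$ denotes the cone probability measure on the $\ell_p$-sphere $\Sph_p^{n-1}\subset\R^n$. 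This factorization simultaneously yields that $\|Y\|_p$ and $Y/\|Y\|_p$ are independent, that $\|Y\|_p\eqdistr U^{1/(n+m)}$ with $U\sim\Uni[0,1]$, and that the angular part has density proportional to $h_\beta(\theta)$ against $\sigma_p$. Since $n+m=\ell$, this matches the radial factor asserted in the first identity, provided one can also identify the angular part with the law of $X_n/\|X_n\|_p$.

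The third step is precisely that identification: the density of $X_n$ differs from that of $Y$ only by the radially symmetric factor $e^{-\sum_i|x_i|^p}$, so the same polar decomposition applied to $X_n$ shows that $X_n/\|X_n\|_p$ is independent of $\|X_n\|_p$ and again has density on $\Sph_p^{n-1}$ proportional to $h_\beta(\theta)\,\dint\sigma_p(\theta)$, closing the ball case. For the cone-measure statement, the analogous Weyl-type reduction, now applied via the coarea formula on the hypersurface $\Sph_{p,\beta}^{n-1}$ equipped with its natural Riemannian surface measure, shows directly that the permuted eigenvalues of $Z_n$ are supported on $\Sph_p^{n-1}$ with density proportional to $h_\beta$ against $\sigma_p$, which is again the angular law identified above. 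The main technical obstacle is the uniform handling of the Weyl integration formula across the three values of $\beta$, especially the quaternionic case $\beta=4$ where the diagonalization uses the compact symplectic group and the Jacobian computation is most delicate; this I would take as a classical black-box input, as found e.g.\ in \cite[Chapter~4 and Appendix~E]{AGZ2010}.
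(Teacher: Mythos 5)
Your Weyl-plus-$\ell_p$-polar-decomposition strategy for the ball case is correct and presumably coincides with the proof in the cited source (the paper itself defers to \cite[Corollary~4.3]{KPT2020_ensembles} without giving details). Steps 1--3 are sound: the Weyl Jacobian $h_\beta$, symmetrized by the independent random permutation, is homogeneous of degree $m=\beta n(n-1)/2$, so the $\ell_p$-polar factorization produces independence of the radial and angular parts, the radial law $\|Y\|_p\eqdistr U^{1/(n+m)}$, and the identification of the angular law with that of $X_n/\|X_n\|_p$, since the extra factor $e^{-\sum_i|x_i|^p}$ in the density of $X_n$ is a function of $\|x\|_p$ alone and hence cancels out of the angular marginal.

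The only spot that needs tightening is your treatment of the cone case. The cone probability measure on $\Sph_{p,\beta}^{n-1}$ is not the normalized Riemannian surface (Hausdorff) measure: for $p\neq 2$ the two differ by a nonconstant density factor, essentially the reciprocal of the Euclidean norm of the gradient of the Schatten $p$-norm on the sphere. So applying the coarea formula ``on the hypersurface equipped with its natural Riemannian surface measure'' does not directly yield the claimed law proportional to $h_\beta$ against the $\ell_p$-cone measure on $\{x\in\R^n:\|x\|_p=1\}$ without additional Jacobian bookkeeping that you have not carried out. Fortunately you need no such argument: by definition the cone measure on $\Sph_{p,\beta}^{n-1}$ is the pushforward of the uniform distribution on $\B_{p,\beta}^n$ under the radial projection $A\mapsto A/\big(\sum_j|\lambda_j(A)|^p\big)^{1/p}$, and since eigenvalues scale linearly under positive scalar multiplication, the permuted eigenvalue vector of the cone-measure matrix is exactly $Y/\|Y\|_p$ for the vector $Y$ you already analyzed in the ball case. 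The cone-case identity then drops out of your Step 2 factorization immediately, with no separate coarea argument required.
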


To present the non self-adjoint variant, for $\beta\in\{1,2,4\}$ let us define the function
$$
\R_+^n\to\R,\quad(x_1,\ldots,x_n)\mapsto h_\beta(x_1,\ldots,x_n):=\prod_{1\leq i<j\leq n}|x_i-x_j|^\beta\,\prod_{i=1}^nx_i^{{\beta\over 2}-1},
$$
which is homogeneous of degree
$$
{\beta n(n-1)\over 2}+n\Big({\beta\over 2}-1\Big).
$$

The following are now the Schechtman--Zinn-type distributional representations for Schatten $p$-classes and essentially \cite[Proposition 8.2]{KPT2019_sanov}; note that instead of $p$ the value $p/2$ occurs at many places since one considers the squares of the singular values and not the singular values themself.

\begin{proposition}\label{prop:schechtman-zinn-non-self-adjoint}
Let $0<p<\infty$ and $\beta\in\{1,2,4\}$. For each $n\in\N$ let $Z_n\in\SSS\B_p^n$ be uniformly distributed and, independently, let $\pi$ be a uniform random permutation of $\{1,\ldots,n\}$. Then
$$
(s_{\pi(1)}^2(Z_n),\ldots,s_{\pi(n)}^2(Z_n))\eqdistr U^{1\over n+m}{X_n\over\|X_n\|_{p/2}} \qquad\text{with}\qquad m:=m(n,\beta):={\beta n(n-1)\over 2}+n\Big({\beta\over 2}-1\Big),
$$
where $U$ is uniformly distributed on $[0,1]$, and, independently of $U$, the random vector $X_n\in\R_+^n$ has joint density proportional to
\begin{align}\label{eq:JointDensitySchatten}
\R_+^n\ni(x_1,\ldots,x_n)\mapsto e^{-n\sum_{i=1}^n|x_i|^{p/2}}\,h_\beta(x_1,\ldots,x_n),
\end{align}
where the proportionality constant only depends on $\beta$, $p$, and $n$. Also, if $Z_n\in\partial\SSS\B_p^n$ is distributed according to the cone probability measure, one has that
$$
(s_{\pi(1)}^2(Z_n),\ldots,s_{\pi(n)}^2(Z_n))\eqdistr {X_n\over\|X_n\|_{p/2}}.
$$
\end{proposition}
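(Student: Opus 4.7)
The plan is to identify both sides of the claimed equality in distribution by computing the joint density on $\R_+^n$ along two parallel routes, mirroring the strategy of Proposition \ref{cor:EigenvaluesProbabRep} in the self-adjoint case but with the Weyl integration formula for the eigenvalue decomposition replaced by its singular value counterpart.

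For the left-hand side, I would write $A\in\Mat_n(\mathbb{F}_\beta)$ in SVD form as $A = W_1\diag(s_1,\ldots,s_n)W_2^*$, where $W_1,W_2$ lie in the appropriate compact group ($O(n)$, $U(n)$ or $Sp(n)$ for $\beta = 1,2,4$). The classical change-of-variables formula (see, e.g., \cite{AGZ2010} and references therein) yields a Jacobian proportional to $\prod_{i<j}|s_i^2-s_j^2|^\beta\prod_i s_i^{\beta-1}$ on the unordered singular values, with Haar measure on the remaining unitary factors, so that the law of $(s^2_{\pi(1)},\ldots,s^2_{\pi(n)})$ for $Z_n$ uniform on $\SSS\B_p^n$ is obtained by restricting to $\{\sum_i s_i^p \leq 1\}$, averaging over the unitary groups (which is exactly what the uniform permutation $\pi$ encodes), and substituting $x_i = s_i^2$ with Jacobian $2^n\prod_i x_i^{1/2}$. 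This absorbs the factor $\prod_i s_i^{\beta-1}$ into $\prod_i x_i^{\beta/2-1}$, giving the joint density of $(s^2_{\pi(1)},\ldots,s^2_{\pi(n)})$ as proportional to $h_\beta(x)\,\mathbbm 1_{\{x\in\R_+^n\,:\,\sum_i x_i^{p/2}\leq 1\}}$.

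For the right-hand side, I would exploit the fact that $h_\beta$ is homogeneous of degree exactly $m = \beta\binom{n}{2} + n(\beta/2 - 1)$. Writing $X_n = R\Theta$ with $R:=\|X_n\|_{p/2}$ and $\Theta := X_n/R$, $\ell_{p/2}$-polar decomposition combined with the density \eqref{eq:JointDensitySchatten} gives that $R$ and $\Theta$ are independent and that $\Theta$ has density proportional to $h_\beta$ against the (positive-orthant) cone probability measure on $\SSS_{p/2}^{n-1}$. Since $U^{1/(n+m)}$ has density $(n+m)\,r^{n+m-1}$ on $[0,1]$ and is independent of $\Theta$, converting back to Lebesgue coordinates and using the homogeneity identity $r^m h_\beta(\theta) = h_\beta(r\theta)$ shows that $U^{1/(n+m)}\Theta$ has Lebesgue density proportional to $h_\beta(y)\,\mathbbm 1_{\{\sum_i y_i^{p/2}\leq 1\}}$ on $\R_+^n$, matching the left-hand side. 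The cone-measure statement then comes for free: $X_n/\|X_n\|_{p/2}$ is precisely the angular part $\Theta$ just analyzed, and by restricting the SVD argument above to the boundary $\partial \SSS\B_p^n$, the law of the unordered squared singular values under the cone measure coincides with the distribution on $\SSS_{p/2}^{n-1}\cap\R_+^n$ having density $\propto h_\beta$ against the cone measure.

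I expect the main obstacle to be bookkeeping in the quaternionic case $\beta = 4$, where singular values are defined via \cite[Corollary E.13]{AGZ2010} and the normalization of Haar measure on $Sp(n)$ must be tracked carefully. None of this is conceptually hard, but it is the point at which slips are easiest. As a sanity check, I would verify that multiplying the proposed $h_\beta$ density by the factor $e^{-n\sum_i x_i^{p/2}}$ and integrating against Lebesgue measure on $\R_+^n$ reproduces the partition function of the standard Laguerre-type ensemble over $\mathbb{F}_\beta$, ensuring that the Weyl-type Jacobian has been assembled with the correct multiplicative constant.
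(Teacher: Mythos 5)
Your proposal is correct and uses the approach that the paper (implicitly, via its pointer to \cite[Proposition 8.2]{KPT2019_sanov}) takes: the Weyl-type change of variables for the singular value decomposition to obtain the joint density of the unordered squared singular values proportional to $h_\beta$ on $\{\sum_i x_i^{p/2}\leq 1\}$, followed by $\ell_{p/2}$-polar decomposition of $X_n$ and the degree-$m$ homogeneity of $h_\beta$ to match the two sides. One small imprecision worth flagging: the uniform permutation $\pi$ is not what encodes the averaging over the unitary factors $W_1,W_2$ --- that averaging is already integrated out by the Weyl-type Jacobian --- rather, $\pi$ only symmetrizes the ordered singular value tuple so the statement can be made about an exchangeable vector. (Also, your written Jacobian $2^n\prod_i x_i^{1/2}$ is the forward Jacobian $|\partial x/\partial s|$; to transform densities you divide by it, which then correctly produces the exponent $\beta/2-1$ you report, so the conclusion stands.)
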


\subsection{Sanov-type LDPs for random matrices in Schatten unit balls}

In \cite{KPT2019_sanov} the authors study principles of large deviations in the non-commutative framework of self-adjoint and non self-adjoint Schatten $p$-classes. Let us note that an LDP for the law of the spectral measure of a Gaussian Wigner matrix had already been obtained by G.~Ben Arous and A.~Guionnet \cite[Theorem 1.1 and Theorem 1.3]{BAG1997} and a more general large deviations theorem for random measures (including the case of the empirical eigenvalue distribution of Wishart matrices) had been obtained by F.~Hiai and D.~Petz in \cite[Theorem 1]{HP1998} (see also \cite{HP2000_book}), which followed their preceding ideas from \cite{HP2000}, where the empirical eigenvalue distribution of suitably distributed random unitary matrices was shown to satisfies an LDP as the matrix size goes to infinity. In the same spirit, Z.~Kabluchko, J.~Prochno, and C.~Th\"ale obtained Sanov-type LDPs for the spectral measure of $n^{1/p}$ multiples of random matrices chosen uniformly (or with respect to the cone measure on the boundary) from the unit balls of self-adjoint and non self-adjoint Schatten $p$-classes where $0< p \leq \infty$. While the proofs roughly follow a classical strategy in large deviations theory (see, e.g., \cite{AGZ2010, BAG1997, HP1998, HP2000}), in the Schatten class case one needs to control simultaneously the deviations of the empirical measures \textit{and} their $p$-th moments towards arbitrary small balls in the product topology of the weak topology on the space of probability measures and the standard topology on $\R$ in the self-adjoint or $\R_+$ in the non self-adjoint set-up, respectively, and then prove exponential tightness. A key element is the probabilistic representation for random points in the unit balls of classical matrix ensembles which we presented in the previous subsection (see \eqref{eq:schechtman_zinn_uniform} and \eqref{eq:schechtman_zinn_cone}) and a non self-adjoint counterpart (see Proposition \ref{prop:schechtman-zinn-non-self-adjoint}). As we shall see, the good rate function governing the LDPs is essentially given by the logarithmic energy (which is remarkably the same as the negative of Voiculescu's free entropy introduced in \cite{V1993}) and, which is quite interesting, a perturbation by a constant, which is connected to the Ullman distribution. As a consequence of their LDPs, the authors obtained a law of large numbers and showed that the spectral measure converges weakly almost surely to the Ullman distribution.

\subsubsection{LDPs in the self-adjoint case}

The following theorem provides a Sanov-type LDP in the self-adjoint setting and is \cite[Theorem 1.1]{KPT2019_sanov}.

\begin{thmalpha}\label{theo:main}
Let $0< p < \infty$ and $\beta\in\{1,2,4\}$. For every $n\in\N$, let $Z_n$ be a random matrix chosen according to the uniform distribution on $\B_{p,\beta}^n$ or the cone measure on its boundary $\Sph_{p,\beta}^{n-1}$. Then the sequence of random probability measures
\[
\mu_n := \frac{1}{n}\sum_{i=1}^n \delta_{n^{1/p}\lambda_i(Z_n)},
\qquad n\in\N,
\]
satisfies an LDP on $\mathcal M_1(\R)$ with speed $n^2$ and good rate function $\mathscr I:\mathcal M_1(\R) \to [0,\infty]$ defined by
\begin{equation*}\label{eq:J_def_rate_funct}
\mathscr I(\mu) :=
\begin{cases}
- \frac \beta 2 \int_{\R}\int_{\R} \log|x-y| \, \mu(\dint x)\,\mu(\dint y) + \frac{\beta}{2p} \log\left(\frac{\sqrt{\pi}p \Gamma(\frac{p}{2})}{2^p\sqrt{e}\Gamma(\frac{p+1}{2})}\right) &\,: \int_{\R}|x|^p\mu(\dint x) \leq 1\\
\infty &\,: \int_{\R}|x|^p\mu(\dint x) > 1\,.
\end{cases}
\end{equation*}
\end{thmalpha}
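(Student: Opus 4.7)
The plan is to combine the non-commutative Schechtman--Zinn representation of Proposition~\ref{cor:EigenvaluesProbabRep} with a Coulomb-gas large deviation principle for the underlying $\beta$-ensemble, and then extract Theorem~\ref{theo:main} by an application of the contraction principle and a short variational computation. First, writing $T_c$ for the pushforward under $x\mapsto cx$, $m_p(\nu) := \int_{\R}|x|^p\dint\nu$, and $\nu_n := \tfrac1n\sum_{i=1}^n\delta_{X_{i,n}}$ for the empirical measure of the $\beta$-ensemble sample $X_n$ with joint density proportional to $e^{-n\sum_i|x_i|^p}\prod_{i<j}|x_i-x_j|^\beta$, the identity $\|X_n\|_p^p = n\, m_p(\nu_n)$ together with Proposition~\ref{cor:EigenvaluesProbabRep} gives
\[
\mu_n \eqdistr T_{U^{1/\ell}\, m_p(\nu_n)^{-1/p}}(\nu_n), \qquad \ell = \tfrac{\beta n(n-1)}{2}+n,
\]
in the uniform case (with the $U^{1/\ell}$ factor removed in the cone case), where $U\sim\mathrm{Unif}[0,1]$ is independent of $\nu_n$. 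Since $\Pro[U^{1/\ell}\le s] = s^{\ell}$ and $\ell/n^2\to\beta/2$, the scalar $U^{1/\ell}$ obeys an LDP at speed $n^2$ with good rate function $I_U(s) = -\tfrac{\beta}{2}\log s$ on $(0,1]$ and $+\infty$ otherwise.

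The main step is a Sanov-type LDP for $(\nu_n)$ on $\mathcal M_1(\R)$ in the weak topology at speed $n^2$ with good rate function
\[
\mathscr F(\nu) = \mathscr F_0(\nu) - \inf\mathscr F_0,\quad \mathscr F_0(\nu) := \int_{\R}|x|^p\dint\nu(x) - \tfrac{\beta}{2}\iint_{\R^2}\log|x-y|\dint\nu(x)\dint\nu(y).
\]
This is the classical $\beta$-ensemble LDP of Ben~Arous--Guionnet and Hiai--Petz adapted to the confining potential $V(x)=|x|^p$: one writes the density in the form $\exp(-n^2 \mathscr F_0(\nu_n))/Z_n$, obtains the upper bound by truncating the log-kernel via $\log(|x-y|\vee\varepsilon)$ and exploiting the continuity and lower-semicontinuity properties of $\mathscr F_0$ thus obtained before letting $\varepsilon\downarrow 0$, produces exponential tightness from the super-linear growth of $|x|^p$, and matches it by the lower bound via approximation of any $\nu$ with $\mathscr F_0(\nu)<\infty$ by an $n$-point quantile empirical measure (with a small spreading so that no two points coincide). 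The partition-function asymptotics $-n^{-2}\log Z_n\to\inf\mathscr F_0$ are extracted from the same bounds. By independence and Lemma~\ref{lem: ldp pairs of independent random objects}, the pair $(U^{1/\ell},\nu_n)$ then obeys a joint LDP at speed $n^2$ with rate $I_U(s)+\mathscr F(\nu)$.

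The rest is a direct application of the contraction principle (Lemma~\ref{lem:contraction principle}) to the continuous map $\Phi(s,\nu) := T_{s\, m_p(\nu)^{-1/p}}(\nu)$. Any preimage of $\mu$ under $\Phi$ has the form $(s,T_\lambda\mu)$, and matching $p$-th moments forces $s = m_p(\mu)^{1/p}$; in particular $\mathscr I(\mu) = \infty$ unless $m_p(\mu)\le 1$. For admissible $\mu$, using the scaling identities $\iint\log|x-y|\dint T_\lambda\mu\otimes T_\lambda\mu = \log\lambda + \iint\log|x-y|\dint\mu\otimes\mu$ and $m_p(T_\lambda\mu) = \lambda^p m_p(\mu)$, one is left with the one-dimensional optimization $\min_{\lambda>0}\bigl(-\tfrac{\beta}{2}\log\lambda + \lambda^p m_p(\mu)\bigr)$, attained at $\lambda_*^p = \beta/(2p\,m_p(\mu))$. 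The $-\tfrac{\beta}{2p}\log m_p(\mu)$ contributions from $I_U(m_p(\mu)^{1/p})$ and from $\log\lambda_*$ cancel exactly, so one is left with $-\tfrac{\beta}{2}\iint\log|x-y|\dint\mu\otimes\mu$ plus a $\mu$-independent constant equal to $\tfrac{\beta}{2p}(1 - \log(\beta/(2p))) - \inf\mathscr F_0$. The explicit numerical form follows from identifying $\inf\mathscr F_0$ via the classical weighted-potential equilibrium problem for the potential $|x|^p$; its minimizer is (a rescaled version of) the Ullman distribution and a direct evaluation produces the constant $\tfrac{\beta}{2p}\log\!\bigl(\tfrac{\sqrt\pi\, p\,\Gamma(p/2)}{2^p\sqrt e\,\Gamma((p+1)/2)}\bigr)$ in the theorem. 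I expect the only substantive obstacle to be the $\beta$-ensemble LDP of the second paragraph: the singular log-kernel, the sharp two-sided asymptotics of $Z_n$, and the discretization step for the lower bound. Everything else is soft — independence, the contraction principle, and a one-variable calculus exercise.
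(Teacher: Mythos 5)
Your overall architecture — using Proposition~\ref{cor:EigenvaluesProbabRep} to write $\mu_n$ as a deterministic transformation of the $\beta$-ensemble empirical measure $\nu_n$ and of $U^{1/\ell}$, then combining an LDP for $\nu_n$, an LDP for $U^{1/\ell}$, independence, and the contraction principle — is the same as the paper's, and the scalar LDP for $U^{1/\ell}$, the distributional identity $\mu_n \eqdistr T_{U^{1/\ell}m_p(\nu_n)^{-1/p}}(\nu_n)$, and the final variational computation over $\lambda$ are all correct. The serious gap is in the unexamined claim that $\Phi(s,\nu) := T_{s\,m_p(\nu)^{-1/p}}(\nu)$ is continuous on $(0,1]\times\mathcal M_1(\R)$: it is not. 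The moment functional $\nu\mapsto m_p(\nu)=\int_\R|x|^p\,\dint\nu$ is only lower semi-continuous in the weak topology — take $\nu_k=(1-k^{-1})\delta_0+k^{-1}\delta_{k^{1/p}}$, which converges weakly to $\delta_0$ yet has $m_p(\nu_k)\equiv 1\neq 0=m_p(\delta_0)$ — so $\Phi$ is neither everywhere well-defined nor continuous, and Lemma~\ref{lem:contraction principle} cannot be invoked for it.

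This is precisely the obstruction the paper singles out as the technical heart of the proof (see the strategy sketch in Section~\ref{Sec:strategy}): a naive contraction through $\nu\mapsto(\nu,m_p(\nu))$ fails because this map is not weakly continuous, and, as the authors emphasize, the rate function for the pair $(\nu_n,m_p(\nu_n))$ is genuinely different from what a naive contraction would predict. The paper's Step~1 therefore proves an LDP for the pair directly from the joint density of the $\beta$-ensemble, controlling simultaneously the empirical measure and its $p$-th moment in the product of the weak topology and the Euclidean topology, and only afterwards applies a legitimately continuous contraction. Your final algebraic cancellation — the $\log m_p(\mu)$ contributions from $I_U$ and from the optimal $\lambda_*$ cancelling — is fine and, one can check, leads to the same final constant even under the corrected pair rate function (the extra freedom $m>m_p(\nu)$ turns out to be suboptimal in the minimization); but as written your second paragraph omits the hardest step and your third paragraph rests on a contraction that is not permitted, so the proposal does not constitute a valid proof as it stands.
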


\begin{rmk}
(i) The distribution of the eigenvalues of a point chosen uniformly at random in $\B_{p,\beta}^n$ can be related to the $2$-dimensional Coulomb gas of $n$ particles at inverse temperature $\beta>0$ in an external potential $V:t\mapsto |t|^p$ acting on each particle. \\
\noindent (ii) As discussed in \cite{{KPT2019_sanov}}, the additive constant to the logarithmic energy in the rate function is closely linked to the limit of the free energy whose precise value follows from results of potential theory.
\end{rmk}

The next theorem deals with the case $p=\infty$ and corresponds to \cite[Theorem 1.3]{KPT2019_sanov}.
\begin{thmalpha}\label{theo:main_p_infty}
Fix $\beta\in\{1,2,4\}$. For every $n\in\N$, let $Z_n$ be a random matrix chosen according to the uniform distribution on $\B_{\infty,\beta}^n$ or the cone measure on its boundary $\Sph_{\infty,\beta}^{n-1}$. Then the sequence of random probability measures
\[
\mu_n := \frac{1}{n}\sum_{i=1}^n \delta_{\lambda_i(Z_n)},
\qquad n\in\N,
\]
satisfies an LDP on $\mathcal M_1(\R)$ with speed $n^2$ and good rate function $\mathscr I:\mathcal M_1(\R) \to [0,\infty]$ defined by
\[
\mathscr I(\mu) =
\begin{cases}
- \frac \beta 2 \int_{\R}\int_{\R} \log|x-y| \, \mu(\dint x)\,\mu(\dint y)
 - \frac {\beta}{2} \log 2 &\,: \mu ([-1,1]) =1 \\
\infty &\,: \mu ([-1,1]) < 1\,.
\end{cases}
\]
\end{thmalpha}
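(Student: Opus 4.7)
The plan is to parallel the strategy behind Theorem \ref{theo:main}, exploiting the two simplifications that appear when $p=\infty$: the eigenvalues of any $Z_n \in \B_{\infty,\beta}^n$ lie automatically in $[-1,1]$, so the relevant empirical measures live in the weakly compact set $\mathcal M_1([-1,1])$, and exponential tightness is automatic. First I would establish the $p=\infty$ analog of Proposition \ref{cor:EigenvaluesProbabRep}: via the Weyl integration formula on $\mathscr H_n(\mathbb F_\beta)$, the symmetrized joint density of the eigenvalues of $Z_n$ uniform on $\B_{\infty,\beta}^n$ is proportional to
\[
\mathbbm{1}_{[-1,1]^n}(x_1,\ldots,x_n)\prod_{1\le i<j\le n}|x_i-x_j|^\beta,
\]
with a Selberg-type normalizing constant $Z_n^{(\beta)}$, and I would reduce the cone-measure case to the uniform one by an argument analogous to the one used for $p<\infty$, showing that the boundary layer $\max_i |x_i|=1$ contributes nothing at speed $n^2$.

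Second, after rewriting $\Pro[\mu_n \in A] = (Z_n^{(\beta)})^{-1}\int_{[-1,1]^n}\mathbbm{1}_{\{\mu_n \in A\}}\exp(\beta\sum_{i<j}\log|x_i-x_j|)\,dx$, I would establish the weak LDP (which, together with exponential tightness, upgrades immediately to the full LDP with a good rate function) by the familiar two-sided argument. The upper bound on closed sets is handled by truncating $\log|x-y|$ to $\max(\log|x-y|,-M)$, exploiting the weak continuity of the truncated logarithmic energy and letting $M\to\infty$; the lower bound on open sets follows by approximating any $\mu$ with $\mathscr I(\mu)<\infty$ by measures with continuous densities supported strictly inside $(-1,1)$ and bounding from below the Lebesgue volume of configurations whose empirical measure lies in a small weak neighborhood by Riemann-sum/covering arguments. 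The additive constant $-\tfrac{\beta}{2}\log 2$ is then extracted from the asymptotics of the normalization: Selberg's integral on $[-1,1]$ with Jacobi weight parameters $\alpha=\beta=1$ and coupling parameter $\beta/2$ gives $Z_n^{(\beta)}$ as an explicit product of Gamma values, and Stirling yields
\[
\lim_{n\to\infty}\frac{1}{n^2}\log Z_n^{(\beta)} \;=\; \sup_{\mu\in\mathcal M_1([-1,1])}\frac{\beta}{2}\iint \log|x-y|\,\mu(dx)\mu(dy) \;=\; -\frac{\beta}{2}\log 2,
\]
the supremum being attained at the arcsine distribution on $[-1,1]$, whose logarithmic energy is the classical value $-\log 2$ (this is the formal $p\to\infty$ limit of the Ullman distribution encountered in Theorem \ref{theo:main}).

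The main obstacle I expect is the uniform control of the logarithmic singularity near the hard walls $\pm 1$: in contrast to the finite-$p$ case, where the confining potential $|x|^p$ pushes the equilibrium measure away from infinity, here the equilibrium measure has its integrable singularity exactly at the endpoints of the allowed interval, so the approximation/truncation steps producing the lower bound for open sets containing measures that charge neighborhoods of $\pm 1$ require the most delicate bookkeeping. Once these estimates are in place, the argument becomes a direct adaptation of the $\beta$-ensemble machinery of \cite{BAG1997, HP1998, HP2000}, whose analogue for finite $p$ the authors have already installed in the proof of Theorem \ref{theo:main}.
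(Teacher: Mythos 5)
Your overall plan is sound and is, as far as the survey makes clear, essentially what is done in \cite{KPT2019_sanov}: identify the eigenvalue law of a uniform matrix in $\B_{\infty,\beta}^n$ as a hard-edge $\beta$-ensemble on $[-1,1]^n$, prove the weak LDP by truncating the logarithmic kernel for the upper bound and mollifying for the lower bound, exploit compactness of $\mathcal M_1([-1,1])$ for exponential tightness, and read off the constant from the Selberg integral (your Stirling check that the $p<\infty$ additive constant $\frac{\beta}{2p}\log\big(\frac{\sqrt{\pi}p\Gamma(p/2)}{2^p\sqrt{e}\Gamma((p+1)/2)}\big)\to -\frac{\beta}{2}\log 2$ as $p\to\infty$ is also correct). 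Also, the paper's Step~1/Step~2, namely the joint LDP for $(\nu_n,\int|x|^p\nu_n)$ followed by contraction, collapses at $p=\infty$ exactly because the moment constraint becomes the support constraint $\mu([-1,1])=1$, which is already built into the configuration space, so your direct attack is the right simplification.

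The one genuine gap is in your treatment of the cone-measure case. You propose to ``reduce the cone-measure case to the uniform one... showing that the boundary layer $\max_i|x_i|=1$ contributes nothing at speed $n^2$''. This will not fly: the two distributions differ by the radial factor $U^{1/\ell}$ (with $\ell = \frac{\beta n(n-1)}{2}+n$, the real dimension of $\mathscr H_n(\mathbb F_\beta)$), and since $\ell\asymp n^2$ the sequence $(U^{1/\ell})_{n\in\N}$ satisfies a nontrivial LDP at speed $n^2$ with rate $c\mapsto-\frac{\beta}{2}\log c$ on $(0,1]$; it is emphatically not exponentially negligible at this speed. The two empirical measures are therefore not exponentially equivalent. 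The reason the rate functions nonetheless coincide is the scale-covariance of the logarithmic energy: if $\mu$ is the pushforward of $\nu$ under $x\mapsto cx$, then $\iint\log|x-y|\,\mu\otimes\mu = \iint\log|x-y|\,\nu\otimes\nu + \log c$, so when one applies the contraction principle (Lemma \ref{lem: ldp pairs of independent random objects}) to the pair $(\mu_n^{\text{cone}}, U^{1/\ell})$ with the continuous scaling map, the $\frac{\beta}{2}\log c$ coming from the rate of $U^{1/\ell}$ cancels against the $-\frac{\beta}{2}\log c$ shift in the energy term, and the infimum over $c$ reproduces $\mathscr I$ exactly. This is precisely Step~3 of the strategy sketched for Theorem \ref{theo:main}, and is the direction (cone $\Rightarrow$ uniform, not the reverse) in which the contraction argument naturally runs. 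Replacing your hand-waved ``boundary layer'' step with this explicit contraction and cancellation closes the gap; the rest of the proposal is correct.
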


As a corollary, we can derive a law of large numbers for $\mu_n$ and show that (weakly almost surely) the sequence of empirical measures converges to a non-random limiting distribution given by the Ullman measure (for $p<\infty$) or arcsine measure (for $p=\infty$). We denote the weak convergence of probability measures by ${\overset{w}{\longrightarrow}}$.
\begin{cor}\label{cor:LLN}
Fix $0< p <\infty$ and $\beta\in\{1,2,4\}$.  Let $\mu_{\infty}^{(p)}$ be the probability measure on the interval $[-b_p,b_p]$ with the Ullman density $x\mapsto h_p(x/b_p)/b_p$, where
\begin{equation*}\label{eq:ullman_def}
h_p(x):={p\over\pi}\left(\int_{|x|}^1{t^{p-1}\over\sqrt{t^2-x^2}}\,\dint t\right) \mathbbm 1_{\{|y|\leq 1\}}(x),
\qquad
b_p
:=
\left(\frac {p \sqrt \pi  \Gamma \left(\frac p2\right)} {\Gamma\left(\frac{p+1}{2}\right)}\right)^{1/p}.
\end{equation*}
(The normalization is chosen so that the $p$-th moment of $\mu_{\infty}^{(p)}$ equals $1$).  Then the random measures $\mu_n$ defined in Theorem~\ref{theo:main} satisfy
$$
\Pro \left[\mu_n \toweak \mu_\infty^{(p)}\right] = 1.
$$
The result also holds in the case $p=\infty$ with $\mu_\infty^{(\infty)}$ being the arcsine distribution with Lebesgue density $\frac {1}{\pi} (1-t^2)^{-1/2}$, $t\in (-1,1)$.
\end{cor}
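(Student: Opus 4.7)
The plan is to derive the almost sure convergence directly from the full LDP in Theorem \ref{theo:main} (respectively Theorem \ref{theo:main_p_infty}), via a standard Borel--Cantelli argument. This requires two ingredients: (i) identification of the unique zero of the good rate function $\mathscr I$, and (ii) the observation that the LDP speed $n^2$ guarantees summable tail probabilities for any open neighborhood of this zero.

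First, I would verify that $\mu_{\infty}^{(p)}$ is the unique minimizer of $\mathscr I$ and that the minimum value equals $0$. For $0<p<\infty$, the finiteness constraint $\int|x|^p\mu(\dint x)\leq 1$ together with the logarithmic energy
\[
E(\mu) := -\int_{\R}\int_{\R}\log|x-y|\,\mu(\dint x)\,\mu(\dint y)
\]
defines a weighted potential-theoretic variational problem: minimizing $E$ over probability measures satisfying that moment constraint. This is a classical problem in weighted potential theory (see e.g.\ Saff--Totik); the unique minimizer in the external field $V(x)=|x|^p$ is the Ullman equilibrium measure, which, after the normalization $\int|x|^p\mu(\dint x)=1$, is exactly $\mu_{\infty}^{(p)}$. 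A direct computation of $E(\mu_{\infty}^{(p)})$ using the defining density $h_p(\,\cdot\,/b_p)/b_p$ produces precisely the value $-\frac{1}{p}\log\bigl(\frac{\sqrt{\pi} p\, \Gamma(p/2)}{2^p\sqrt{e}\,\Gamma((p+1)/2)}\bigr)$, so the additive constant in $\mathscr I$ is calibrated to yield $\mathscr I(\mu_{\infty}^{(p)})=0$ and $\mathscr I(\mu)>0$ for every other $\mu\in\mathcal M_1(\R)$. The case $p=\infty$ is analogous: among probability measures supported in $[-1,1]$, logarithmic energy is minimized by the arcsine law with minimal value $\log 2$, matched by the additive constant $-\frac{\beta}{2}\log 2$ in Theorem \ref{theo:main_p_infty}.

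Next, since $\mathscr I$ is a good rate function with a unique zero at $\mu_{\infty}^{(p)}$, for any open neighborhood $U$ of $\mu_{\infty}^{(p)}$ in the weak topology the complement $U^c$ is closed, and lower semicontinuity together with compactness of the sublevel sets forces
\[
c_U := \inf_{\mu\in U^c}\mathscr I(\mu)\,>\,0.
\]
The LDP upper bound then yields $\limsup_{n\to\infty}\frac{1}{n^2}\log\Pro[\mu_n\in U^c]\leq -c_U$, so $\Pro[\mu_n\in U^c]\leq e^{-\frac{c_U}{2}n^2}$ for all sufficiently large $n$. This bound is summable in $n$, so by the Borel--Cantelli lemma $\Pro[\mu_n\in U^c\text{ infinitely often}]=0$. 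Because $\R$ is Polish, $\mathcal M_1(\R)$ equipped with the weak topology is itself metrizable (e.g.\ by the L\'evy--Prokhorov metric), hence admits a countable base of neighborhoods of $\mu_{\infty}^{(p)}$. Intersecting the full-measure events over this countable base gives $\Pro[\mu_n\toweak \mu_{\infty}^{(p)}]=1$.

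The main obstacle is really the potential-theoretic step: knowing that the weighted equilibrium measure with external field $|x|^p$ is the Ullman distribution, and that the additive constant appearing in the rate function exactly equals the corresponding minimal energy. Once these facts are in place (they are classical, and in any case follow essentially by construction of $\mathscr I$, since $\mathscr I$ was derived as a limit of normalized log-partition functions that subtract the free energy), the LDP-to-SLLN passage is routine because the super-exponential speed $n^2$ renders the Borel--Cantelli summability automatic.
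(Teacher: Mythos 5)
Your proposal is correct and carries out exactly the standard passage from an LDP with a good rate function having a unique zero to an almost sure law of large numbers, via (i) positivity of the infimum of $\mathscr I$ on the complement of any open neighborhood of the minimizer, which you rightly derive from goodness and lower semicontinuity, (ii) summability of the resulting exponential tail bounds, and (iii) a countable base of neighborhoods (available because $\mathcal M_1(\R)$ with the weak topology is metrizable). The paper itself states the corollary without giving a proof, so there is nothing explicit to compare against; your argument is precisely the derivation the authors have in mind when they write ``As a corollary.''

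Two small comments, neither of which is a gap. First, your remark that the ``super-exponential speed $n^2$ renders the Borel--Cantelli summability automatic'' is true but not needed: any speed $s_n$ with $\sum_n e^{-c s_n}<\infty$ for $c>0$ (in particular speed $n$) already suffices, so the special feature of $n^2$ plays no role here. Second, for the identification and uniqueness of the minimizer, an alternative to invoking weighted potential theory is to observe directly that the logarithmic energy $E(\mu)=-\iint\log|x-y|\,\mu(\dint x)\mu(\dint y)$ is strictly convex on measures of finite energy, the constraint set $\{\int|x|^p\,\mu(\dint x)\le 1\}$ is convex, and a scaling argument ($\mu\mapsto\mu(\cdot/t)$ with $t>1$ strictly decreases $E$ while relaxing the constraint) forces the moment constraint to be active at the optimum; together with the fact that the LDP normalization guarantees $\inf\mathscr I=0$ (the rate function of a sequence of probability measures must vanish somewhere, cf.\ Remark~\ref{rem:good rate function}), this gives existence and uniqueness of a zero of $\mathscr I$, which the paper then identifies as the Ullman measure by the explicit computation you allude to. Both routes are fine; yours leans on the classical Saff--Totik theory, which is equally acceptable.
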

For example, if $p=2$ the limiting spectral density takes the form (see~\cite[pp.~195--196]{HP2000_book}),
$$
\frac{\mu_{\infty}^{(1)}(\dint x)}{\dint x} =  \frac 1 {\pi^2}\log \frac{\pi + \sqrt{\pi^2-x^2}}{|x|}\, \mathbbm 1_{(-\pi,\pi)}(x),
$$
and if $p=1$ we get the semi-circle distribution with density
$$
\frac{\mu_{\infty}^{(2)}(\dint x)}{\dint x} = \frac{1}{2\pi} \sqrt{4-x^2} \, \mathbbm 1_{(-2,2)}(x),
$$
see Figure \ref{fig1}.

\begin{figure}[t]
\begin{center}
  \begin{tikzpicture}[scale=0.5]
      \begin{axis}[
       clip=false,
       xmin=-3.1415,xmax=3.1415,
       ymin=0, ymax=0.65,
       xtick={-3.1415,0,3.1415},
       xticklabels={$-\pi$, $0$,$\pi$}
       ]
        \addplot[domain=-3.1415:3.1415,samples=200,black,thick]{1/3.1415^2*ln((3.1415+sqrt(3.1415^2-x^2))/abs(x))};
      \end{axis}
    \end{tikzpicture}
    \qquad\qquad
  \begin{tikzpicture}[scale=0.5]
    \begin{axis}[
     clip=false,
     xmin=-2,xmax=2,
     ymin=0, ymax=0.5,
     xtick={-2,0,2},
     xticklabels={$-2$, $0$,$2$}
     ]
      \addplot[domain=-2:2,samples=200,black,thick]{1/(2*3.1415)*sqrt(4-x^2)};
    \end{axis}
  \end{tikzpicture}
\end{center}
\caption{Plots of the densities $\frac{\mu_{\infty}^{(1)}(\dint x)}{\dint x}$ (left) and $\frac{\mu_{\infty}^{(2)}(\dint x)}{\dint x}$ (right).}
\label{fig1}
\end{figure}
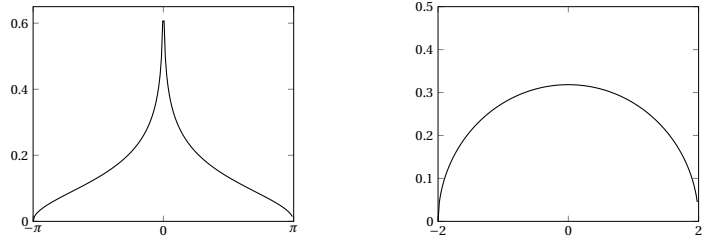

\subsubsection{LDPs in the non self-adjoint case}

After having discussed the self-adjoint case, we now turn to the non self-adjoint case.  The following Sanov-type LDP is the analogue of Theorem \ref{theo:main} and Theorem \ref{theo:main_p_infty} for the non self-adjoint case and corresponds to \cite[Theorem 1.5]{KPT2019_sanov}.

\begin{thmalpha}\label{thm:MainNonSelfAdjoint}
Fix $\beta\in\{1,2,4\}$ and $0<p<\infty$. For every $n\in\N$, let $Z_n$ be a random matrix chosen uniformly from $\SSS\B_p^n$ or according to the cone probability measure from $\partial\SSS\B_p^n$. Then the sequence of random probability measures
$$
\mu_n := {1\over n}\sum_{i=1}^n\delta_{n^{2/p}s_j^2(Z_n)},\qquad n\in\N,
$$
satisfies an LDP on the space $\mathcal{M}_1(\R_+)$ of Borel probability measures on $\mathbb{R}_+$, endowed with the weak topology, with speed $n^2$ and good rate function $\mathscr{J}:\mathcal{M}_1(\R_+)\to[0,\infty]$ given by
$$
\mathscr{J}(\mu):=\begin{cases}
-{\beta\over 2}\int_{\R_+}\int_{\R_+}\log|x-y|\mu(\dint x)\mu(\dint y)+{\beta\over p}\log\Big({\sqrt{\pi}p\Gamma({p\over 2})\over 2^p\sqrt{e}\Gamma({p+1\over 2})}\Big) &: \int_{\R_+}|x|^{p/2}\mu(\dint x)\leq 1\\ 
\infty &:\int_{\R_+}|x|^{p/2}\mu(\dint x)> 1.
\end{cases}
$$
The result continues to holds in the case $p=\infty$ if the constant term in the rate function is replaced by its limiting value, as $p\to\infty$, which is given by $-{\beta\over 2}\log 2$.
\end{thmalpha}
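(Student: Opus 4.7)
The plan is to reduce Theorem \ref{thm:MainNonSelfAdjoint} to an empirical-measure LDP for the free ensemble with joint density \eqref{eq:JointDensitySchatten}, exactly as Theorem \ref{theo:main} is reduced to the self-adjoint Coulomb-gas LDP, and then transport it to $\mu_n$ via Lemma \ref{lem: ldp pairs of independent random objects} combined with the contraction principle (Lemma \ref{lem:contraction principle}). By Proposition \ref{prop:schechtman-zinn-non-self-adjoint}, after a uniform permutation (which leaves $\mu_n$ invariant), one has
\[
\mu_n \,\eqdistr\, \frac{1}{n}\sum_{i=1}^{n}\delta_{n^{2/p}\,U^{1/(n+m)}\,X_{i,n}/\|X_n\|_{p/2}},
\]
where $X_n$ has density \eqref{eq:JointDensitySchatten} on $\R_+^n$, $U$ is uniform on $[0,1]$ independent of $X_n$, and crucially $n+m=\beta n^{2}/2$. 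Writing $\nu_n := \tfrac{1}{n}\sum_i \delta_{X_{i,n}}$, the measure $\mu_n$ is the pushforward of $\nu_n$ under the dilation $t\mapsto c_n t$ with $c_n = U^{1/(n+m)}/\bigl(\int x^{p/2}\nu_n(dx)\bigr)^{2/p}$.

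The main work is to establish an unconstrained LDP for $\nu_n$ on $\mathcal{M}_1(\R_+)$ at speed $n^{2}$ with good rate function
\[
\mathscr{E}(\nu) \;=\; -\frac{\beta}{2}\iint \log|x-y|\,\nu(dx)\nu(dy) + \int |x|^{p/2}\,\nu(dx) + c_{p,\beta},
\]
where $c_{p,\beta}$ is determined by the partition-function asymptotics so that $\inf \mathscr{E}=0$. For this I would follow the now-classical Ben Arous--Guionnet \cite{BAG1997} / Hiai--Petz \cite{HP1998, HP2000_book} scheme: rewrite the density in the form $\exp(-n^{2} F_n(\nu_n))$ where the quadratic Vandermonde contribution $\beta\sum_{i<j}\log|x_i-x_j|$ and the confining term $n\sum_i |x_i|^{p/2}$ both operate at the correct $n^{2}$ scale, while the spurious factor $\prod_i x_i^{\beta/2-1} = \exp\bigl((\beta/2-1)n\!\int\!\log x\,\nu_n\bigr)$ and the lower-order $n\log n$ terms in the normalization vanish at speed $n^{2}$; then obtain matching upper and lower bounds by Laplace's method (truncating the logarithm from above and invoking lower semi-continuity to pass to the limit); establish exponential tightness on $\mathcal{M}_1(\R_+)$ using the $|x|^{p/2}$ confinement, which concentrates mass on compacts; and finally identify $c_{p,\beta}$ via the potential-theoretic minimization of the weighted energy on $\R_+$ with external field $|x|^{p/2}$, whose unique minimizer is the (squared) Ullman distribution from Corollary \ref{cor:LLN}.

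Once this is in hand, the factor $U^{1/(n+m)}$ satisfies its own elementary LDP at speed $n^2$ with good rate function $u\mapsto -\tfrac{\beta}{2}\log u$ on $(0,1]$ and $\infty$ otherwise, as follows directly from $\Pro[U^{1/(n+m)}\leq u]=u^{n+m}$ and $n+m=\beta n^{2}/2$. By independence, Lemma \ref{lem: ldp pairs of independent random objects} yields a joint LDP for $(\nu_n, U^{1/(n+m)})$ with rate $\mathscr{E}(\nu) - \tfrac{\beta}{2}\log u$. Applying the contraction principle to the continuous map $(\nu,u)\mapsto \nu\circ\bigl(t\mapsto u\,t/(\!\int x^{p/2}\nu)^{2/p}\bigr)^{-1}$ (after using Lemma \ref{lem:exponential equivalence same ldp} to replace the $n$-dependent exponent by its limiting form where needed) and using the scale covariance $\iint\log|cx-cy|\,\nu(dx)\nu(dy) = \log c + \iint\log|x-y|\,\nu(dx)\nu(dy)$, a short calculation shows that the minimization over $(\nu,u)$ compatible with a given $\mu$ forces $u=\bigl(\int x^{p/2}\mu\bigr)^{2/p}$ and optimizing the remaining parameter collapses the $\int x^{p/2}\mu$ dependence, producing exactly the claimed $\mathscr{J}$ on $\{\int x^{p/2}\mu\leq 1\}$ (and $+\infty$ otherwise since the $U$-rate forces $u\leq 1$). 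The cone-measure case is identical with $U\equiv 1$; the case $p=\infty$ follows by the same route, replacing the confining potential with the hard constraint $\mathrm{supp}(\mu)\subset[0,1]$ and computing the corresponding minimal logarithmic energy.

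The main obstacle is the unconstrained LDP for $\nu_n$: specifically, the precise potential-theoretic computation of the constant $c_{p,\beta}$, and showing rigorously that the sub-leading factors (the power $\prod x_i^{\beta/2-1}$ and the $n\log n$ corrections in the Selberg-type normalization) really do vanish at speed $n^{2}$ uniformly on the exponentially tight sets, so that they contribute nothing spurious to the rate function. A secondary subtlety is that on the support of the extremal Ullman-type measure the map $\nu\mapsto\int x^{p/2}\nu$ is not continuous in the weak topology on all of $\mathcal{M}_1(\R_+)$, so the exponential tightness must be strong enough to reduce to compact subsets of $\R_+$ before applying the contraction.
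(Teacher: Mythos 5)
Your overall scaffolding (probabilistic representation from Proposition \ref{prop:schechtman-zinn-non-self-adjoint}, the $U^{1/(n+m)}$ LDP at speed $n^2$ with rate $u\mapsto -\tfrac{\beta}{2}\log u$ via $n+m\sim\beta n^2/2$, and the final dilation-plus-normalization map) tracks the paper's Steps 2 and 3 in Section \ref{Sec:strategy}. But there is a genuine gap in how you set up the contraction, and it is not the ``secondary subtlety'' you mention at the end — it is the central difficulty the paper explicitly addresses and designs its Step 1 around.

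You propose to prove an LDP for the raw empirical measure $\nu_n$ alone, and then to apply the contraction principle to the map $(\nu,u)\mapsto \nu\circ\bigl(t\mapsto u\,t/(\int x^{p/2}\,\nu)^{2/p}\bigr)^{-1}$. This map involves the functional $\nu\mapsto\int x^{p/2}\,\nu$, which is not continuous (only lower semicontinuous) on $\mathcal M_1(\R_+)$ with the weak topology, so Lemma \ref{lem:contraction principle} simply does not apply. The paper flags exactly this: in Section \ref{Sec:strategy} it states that contracting from the LDP for $\nu_n$ via $\mu\mapsto(\mu,\int|x|^p\,\mu)$ ``may look like a mere technical issue, it is indeed not; one can check that the correct rate function for $(\nu_n,\int|x|^p\,\nu_n)$ does not coincide with what one would expect by a naive application of the contraction principle.'' In other words, even on the level of the answer the naive contraction produces the wrong rate function — mass escaping to infinity can carry $(p/2)$-th moment without being seen by weak limits, so the joint rate function for $(\nu_n,\int x^{p/2}\,\nu_n)$ genuinely differs from the contracted one. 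The correct route (the paper's Step 1) is to establish the LDP for the pair $\bigl(\nu_n,\int x^{p/2}\,\nu_n\bigr)$ directly on $\mathcal M_1(\R_+)\times\R_+$ by controlling simultaneous deviations of the empirical measure and its empirical $(p/2)$-th moment in the product topology, and only then to contract with the (now genuinely continuous) map $(\nu,m,u)\mapsto\nu\circ(t\mapsto u\,t/m^{2/p})^{-1}$.

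Your proposed patch — ``the exponential tightness must be strong enough to reduce to compact subsets of $\R_+$ before applying the contraction'' — does not repair this. Exponential tightness on $\mathcal M_1(\R_+)$ yields compact subsets of the space of measures, i.e.\ tight \emph{families}, which do not have uniformly compact supports; the moment functional remains discontinuous on such compacts. Forcing continuity would require working in a $(p/2)$-Wasserstein topology and re-proving the $\nu_n$-LDP there, which neither you nor the paper does, and which would be a substantially different (and unjustified) project. To make this proof go through you must replace the contraction-from-$\nu_n$ step with a direct joint LDP for $(\nu_n,\int x^{p/2}\nu_n)$, which is precisely the technical heart of the argument in \cite{KPT2019_sanov}.
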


Again as a corollary, the authors derive a law of large numbers for the empirical singular-value distribution (and not for the squares of the singular values as in Theorem \ref{thm:MainNonSelfAdjoint}). This is the analogue of Corollary \ref{cor:LLN} for the Schatten $p$-balls.

\begin{cor}\label{cor:SLLNSchatten}
Fix $\beta\in\{1,2,4\}$ and $0<p<\infty$. Let $\eta_\infty^{(p)}$ be the probability measure on $[0,b_p]$ with density $x\mapsto 2b_p^{-1}h_p(x/b_p)$ with $h_p(x)$ and $b_p$ given by \eqref{eq:ullman_def}. Further, for each $n\in\N$, let $Z_n$ be uniformly distributed in $\SSS\B_p^n$ or distributed according to the cone probability measure on $\partial\SSS\B_p^n$. Then 
$$
\Pro\Big[{1\over n}\sum_{j=1}^n\delta_{n^{1/p}s_j(Z_n)}\toweak\eta_\infty^{(p)}\Big] = 1.
$$
The result also holds in the case $p=\infty$ with $\eta_\infty^{(\infty)}$ being the absolute arcsine distribution with density $x\mapsto{2\over\pi}(1-t^2)^{-1/2}$, $t\in(0,1)$.
\end{cor}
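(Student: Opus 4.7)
\textbf{Proof proposal for Corollary \ref{cor:SLLNSchatten}.} The plan is to deduce the strong law directly from the Sanov-type LDP in Theorem \ref{thm:MainNonSelfAdjoint}, in two stages: first upgrade the LDP to almost sure weak convergence of the empirical measure of \emph{squared} singular values, and then transport the result to the singular values themselves via a continuous mapping argument.

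First, I would argue that the rate function $\mathscr{J}$ on $\mathcal{M}_1(\R_+)$ has a unique minimizer $\mu_*$. The effective domain $\{\mu: \int x^{p/2}\mu(\dint x) \le 1\}$ is convex, the logarithmic energy $\mu \mapsto -\iint \log|x-y|\,\mu(\dint x)\mu(\dint y)$ is strictly convex on the set of probability measures of finite energy, and the constraint is linear, so there is at most one minimizer; existence follows from goodness (compact sublevel sets) combined with lower semicontinuity. Given uniqueness, standard Borel--Cantelli from the LDP yields almost sure convergence: for any open neighborhood $U$ of $\mu_*$ in the weak topology, the upper bound in Theorem \ref{thm:MainNonSelfAdjoint} gives
\[
\limsup_{n\to\infty}\,\tfrac{1}{n^2}\log\Pro[\mu_n\in U^c] \le -\inf_{\mu\in U^c}\mathscr{J}(\mu) < 0,
\]
so $\sum_n \Pro[\mu_n\in U^c]<\infty$ and $\mu_n\toweak\mu_*$ almost surely (considering a countable convergence-determining family of neighborhoods of $\mu_*$).

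The second stage is to identify $\mu_*$ and to push forward. The Euler--Lagrange equations associated with $\mathscr{J}$, after the change of variables $x = y^2$ transforming measures on $\R_+$ into symmetric measures on $\R$, reduce to exactly the variational problem already solved in Corollary \ref{cor:LLN} for the self-adjoint Schatten balls: the moment constraint $\int x^{p/2}\mu(\dint x)\le 1$ becomes $\int |y|^p \tilde\mu(\dint y)\le 1$, the logarithmic-energy functional is (up to the Jacobian term $\log|y_1^2-y_2^2| = \log|y_1-y_2|+\log|y_1+y_2|$, whose two halves contribute symmetrically when $\tilde\mu$ is symmetric) precisely the one minimized by the Ullman distribution $\mu_\infty^{(p)}$. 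Hence $\mu_*$ is the law of $Y^2$ when $Y\sim\mu_\infty^{(p)}$, equivalently the law of $|Y|$ squared, whose distribution is the claimed absolute Ullman measure $\eta_\infty^{(p)}$ pushed forward by $x\mapsto x^2$. The map $T:\R_+\to\R_+$, $T(x)=\sqrt{x}$, is continuous, and its action on $\mu_n$ produces exactly $\frac1n\sum_j \delta_{n^{1/p}s_j(Z_n)}$ because singular values are non-negative. The continuous mapping theorem for weak convergence then gives
\[
\frac{1}{n}\sum_{j=1}^n \delta_{n^{1/p}s_j(Z_n)} \toweak T_*\mu_* = \eta_\infty^{(p)} \qquad \Pro\text{-almost surely}.
\]
The case $p=\infty$ is handled identically, with the Ullman measure replaced by the arcsine measure, whose push-forward under $T$ is the absolute arcsine distribution.

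The main obstacle is the identification step: verifying rigorously that the minimizer of $\mathscr{J}$ on $\mathcal{M}_1(\R_+)$ coincides with the push-forward (under squaring) of the Ullman measure. This requires a careful potential-theoretic calculation, either by directly deriving and solving the Euler--Lagrange equation for $\mathscr{J}$ on $\R_+$ with an external field $|x|^{p/2}$, or by an explicit symmetrization argument reducing the problem to the self-adjoint case already treated in Corollary \ref{cor:LLN}. The rest of the argument---the Borel--Cantelli upgrade and the continuous mapping step---is routine once uniqueness of the minimizer is in hand.
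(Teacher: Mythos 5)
Your proposal is correct and spells out the argument that the survey only alludes to when it says the law of large numbers follows ``as a corollary''; the route you take (Borel--Cantelli from the speed-$n^2$ LDP once the unique zero of $\mathscr J$ is in hand, then identification of that zero via symmetrization, then pushforward by $x\mapsto\sqrt{x}$) is the natural one and matches what is done in \cite{KPT2019_sanov}. The gap you flag — the identification of $\mu_*$ — is genuinely the content of the argument, and your symmetrization sketch does close it: writing $\tilde\mu = \tfrac12(\sqrt{\cdot})_*\mu + \tfrac12(-\sqrt{\cdot})_*\mu$, one checks that $\int_\R |y|^p\,\tilde\mu(\dint y) = \int_{\R_+} x^{p/2}\,\mu(\dint x)$ and $\iint\log|y_1-y_2|\,\tilde\mu\tilde\mu = \tfrac12\iint\log|x_1-x_2|\,\mu\mu$ (split $\log|y_1-y_2|$ over the four sign choices, pair the equal terms, and use $\log|\sqrt{x_1}-\sqrt{x_2}| + \log(\sqrt{x_1}+\sqrt{x_2}) = \log|x_1-x_2|$). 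Thus the $\R_+$-variational problem is a constant multiple of the symmetric self-adjoint one, whose minimizer is the Ullman distribution $\mu_\infty^{(p)}$ from Corollary~\ref{cor:LLN}; this also explains the factor-of-two discrepancy between the constants $\tfrac{\beta}{p}$ and $\tfrac{\beta}{2p}$ in Theorems \ref{thm:MainNonSelfAdjoint} and \ref{theo:main}. Two small points worth tightening when writing this out: strict convexity of the logarithmic energy is stated in the literature for measures of finite log-energy and suitable moment/tail control, so you should note that the effective domain of $\mathscr J$ (finite energy plus $\int x^{p/2}\mu\le 1$) lies inside a class where strict convexity is known; and when comparing against Corollary \ref{cor:LLN} you are implicitly using that the minimizer of the self-adjoint rate function over \emph{all} admissible measures (not only symmetric ones) is symmetric — this follows from uniqueness plus the symmetry-invariance of the self-adjoint rate function.
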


For example, if $p=1$ the limiting distribution has density 
$$
{\dint\eta_\infty^{(1)}\over \dint x}(x) = {2\over\pi^2}\log{\pi+\sqrt{\pi^2-x^2}\over x}\, \mathbbm 1_{(0,\pi)}(x),
$$
and if $p=2$ we get the `quater-circle distribution' with density
$$
{\dint\eta_\infty^{(2)}\over \dint x}(x) = {1\over\pi}\sqrt{4-x^2} \,\mathbbm 1_{(0,2)}(x),
$$
see Figure \ref{fig2}.

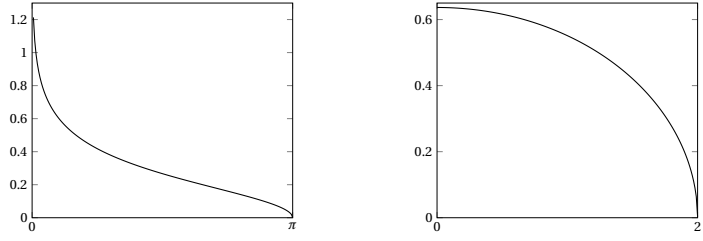
\begin{figure}[t]
\begin{center}
\begin{tikzpicture}[scale=0.5]
      \begin{axis}[
       clip=false,
       xmin=0,xmax=3.1415,
       ymin=0, ymax=1.3,
       xtick={0,3.1415},
       xticklabels={$0$,$\pi$}
       ]
        \addplot[domain=0:3.1415,samples=200,black,thick]{2/3.1415^2*ln((3.1415+sqrt(3.1415^2-x^2))/abs(x))};
      \end{axis}
    \end{tikzpicture}
    \qquad\qquad
  \begin{tikzpicture}[scale=0.5]
    \begin{axis}[
     clip=false,
     xmin=0,xmax=2,
     ymin=0, ymax=0.65,
     xtick={0,2},
     xticklabels={$0$,$2$}
     ]
      \addplot[domain=0:2,samples=200,black,thick]{1/(3.1415)*sqrt(4-x^2)};
    \end{axis}
  \end{tikzpicture}
\end{center}
\caption{Plots of the densities $\frac{\eta_{\infty}^{(1)}(\dint x)}{\dint x}$ (left) and $\frac{\eta_{\infty}^{(2)}(\dint x)}{\dint x}$ (right).}
\label{fig2}
\end{figure}

\subsubsection{Strategy of the proof}\label{Sec:strategy}

The proofs are quite technical and looking at any of the details is beyond the scope of this survey, but we shall take a glimpse at the general strategy of the proof of Theorem~\ref{theo:main} (the non self-adjoint case follows the same strategy).

\vskip 2mm
\noindent
\textit{Step 1:}
The authors prove an LDP for the sequence of pairs
\begin{equation}\label{eq:pair}
\left(\nu_n, \int_\R |x|^p \nu_n(\dint x)\right) = \left(\frac{1}{n}\sum_{i=1}^n\delta_{X_{i,n}},\frac{1}{n}\sum_{i=1}^n |X_{i,n}|^p\right),
\quad n\in\N,
\end{equation}
of empirical measures 
  \[
    \nu_n := \frac{1}{n}\sum_{i=1}^n\delta_{X_{i,n}}
  \]   
of the vectors $X_n=(X_{1,n},\ldots,X_{n,n})$, $n\in\N$, introduced in \eqref{eq:distribution of (X_1,...,X_n)}, and empirical $p$th moments of these measures.
At first it seems natural to try to apply the contraction principle to the LDP for $\nu_n$ (which is known to exist, e.g., \cite[Theorem 5.4.3]{HP2000_book}) with the mapping $\mu \mapsto (\mu, \int_\R |x|^p \mu(\dint x))$. However, this mapping is not continuous in the weak topology. While this may look like a mere technical issue, it is indeed not; one can check that the correct rate function for $(\nu_n, \int_\R |x|^p \nu_n(\dint x))$ does not coincide with what one would expect by a naive application of the contraction principle.

\vskip 2mm
\noindent
\textit{Step 2:} Using the contraction principle, the authors then derive an LDP for the sequence of random measures
\[
\frac{1}{n}\sum_{i=1}^n \delta_{n^{1/p}{ X_{i,n}\over \|X\|_p}}.
\qquad n\in\N,
\]
This proves Theorem~\ref{theo:main} in the case where $Z_n$, $n\in\N$, are sampled according to the cone measure on $\Sph_{p,\beta}^{n-1}$.

\vskip 2mm
\noindent
\textit{Step 3:}
For a uniform random variable $U$ on $[0,1]$, the sequence $(U^{1/\ell})_{n\in\N}$ (recall that $\ell = {\beta n(n-1)\over 2} + n$) satisfies an LDP with rate function 
  \[
   x\mapsto 
   \begin{cases}
     -\frac{\beta}{2}\log x &: x\in(0,1] \cr
     \infty &: \text{otherwise}.
   \end{cases}
  \]
Applying the contraction principle, the authors then derive the LDP for the sequence of random measures
\[
\frac 1n \sum_{i=1}^n \delta_{n^{1/p} U^{1 /\ell}{X_{i,n}\over\|X_n\|_p}}, \qquad n\in\N.
\]
This proves Theorem~\ref{theo:main} in the case where $Z_n$, $n\in\N$, are distributed uniformly on  the ball $\B_{p,\beta}^{n}$.

\section{Large deviations results and techniques in the world of Orlicz spaces}

Most of the large deviations results that have been obtained for quantities in geometric functional analysis belong to the framework of $\ell_p$-spaces (or their non-commutative Schatten $p$-class counterparts). As already explained in the introduction and in Section \ref{sec:ldps in lp spaces}, the reason for being able to obtain MDPs or LDPs in $\ell_p$-spaces is the existence of the Schechtman--Zinn probabilistic representation for random vectors chosen uniformly at random in $\ell_p^n$-balls (or chosen with respect to a generalized distribution as introduced by F.~Barthe, O.~Gu{\'e}don, S.~Mendelson, and A.~Naor \cite{BartheGuedonEtAl}). 
It took quite a while to understand how to go beyond this setting and obtain large or moderate deviations results in the important class of Orlicz spaces, which form a natural generalization of $\ell_p$-spaces and belong to the class of $1$-symmetric Banach spaces; those spaces have first been introduced by W.~Orlicz in 1932 and are of fundamental importance in real analysis, harmonic analysis, and functional analysis. The reason for difficulties in obtaining probabilistic results in Orlicz spaces was that in the world of Orlicz spaces there is no classical Schechtman--Zinn type probabilistic representation of the uniform distribution at our disposal. The key observation was that such a representation now involving appropriate Gibbs measures, is at least asymptotically available and this was derived by Z.~Kabluchko and J.~Prochno by means of maximum entropy considerations inspired by so-called energy constraints in statistical mechanics \cite{KP2020}. In particular, this sheds new light on the classical Schechtman--Zinn probabilistic representation. Before we dive into large deviations results for Orlicz spaces, we shall provide a formal definition and explain the heuristic idea as done in \cite[Section 1.2]{KP2020}, which is of independent interest.

\subsection{Orlicz spaces}

A convex function $M:\R\to \R$ is said to be an Orlicz function if and only if $M(t) = M(-t)$, $M(0)=0$, and $M(t)>0$ for $t\neq 0$.
The functional
\[
\|(x_1,\dots,x_d)\|_{M} := \inf \left \{ \rho > 0 \,:\, \sum_{i=1}^dM\Big(\frac{|x_i|}{\rho}\Big)  \le 1 \right\}
\]
is a norm on $\R^d$, known as Luxemburg norm, named after W.~A.~J.~Luxemburg~\cite{Lux1955}. The Orlicz space $\ell_M^d$ is $\R^d$ equipped with the Luxemburg norm and we denote by
\[
\B_M^d := \Big\{x=(x_i)_{i=1}^d\in\R^d\,:\, \|x\|_M \leq 1 \Big\}
\]
the unit ball in this space. Clearly those spaces generalize the classical $\ell_p^d$-spaces (just consider $M(t)=|t|^p$, $1\leq p<\infty$) and belong to the class of $1$-symmetric Banach spaces. One commonly just speaks of Orlicz functions, Orlicz norms, and Orlicz spaces; we refer to  \cite{KR1961} for an introduction to the theory of Orlicz spaces.

It is a simple observation (see, e.g., \cite[Lemma 2.1]{KP2020}) that $\B_M^d$ coincides with 
\[
B_M^d := \Bigg\{x=(x_i)_{i=1}^d \in\R^d\,:\, \sum_{i=1}^d M(x_i) \leq 1 \Bigg\},
\]
which simplifies computations because one does not need to work with the infimum in the Luxemburg norm.

\subsection{The maximum entropy principle \& Gibbs measures}

Let us explain here how the distributions that play a central role in the main results for Orlicz spaces obtained in \cite{KP2020} naturally appear through what is known as the maximum entropy principle. The idea behind this principle is that the probability distribution which best represents the current state of knowledge about a system is the one with largest entropy (in the physics sense). It seems that it has first been expounded by E.~T.~Jaynes in 1957 \cite{J1957_I,J1957_II} and he suggested a reinterpretation of statistical mechanics thereby connecting it to information theory. 

Our argumentation below will not be mathematically rigorous, but provides a good heuristic to show how the desired Gibbs distributions appear in our setting. We follow \cite[Section 1.2]{KP2020} and refer the interested reader to \cite{RAS2015}, \cite[Section 7.3]{DZ2010}, and \cite[Section III]{E2006} for detailed expositions regarding micro-canonical and canonical ensembles.

Let us consider a sequence of iid random variables $Y_i$, $i\in\N$, taking values in a Polish space $E$ and having distribution $\lambda\in\mathscr M_1(E)$, where we recall that $\mathscr M_1(E)$ is the space of probability measures on $E$ equipped with the weak topology. For $d\in\N$, we shall denote by $L_d:=L_d^Y\in\mathscr M_1(E)$ the empirical measure associated with the $Y_i$'s, i.e.,
  \[
    L_d := \frac{1}{d}\sum_{i=1}^d \delta_{Y_i}.
  \]
This measure is thus a random probability measure. In the setting of Sanov's theorem (see Section \ref{sec:sanovs ldp theorem} above) we know that, as $d\to\infty$, $L_d\to \lambda$ almost surely at an exponential rate. If we consider a set $C$ of probability measures whose closure does not contain $\lambda$, then by the law of large numbers, $\Pro[L_d \in C]\to 0$ as $d\to\infty$. The maximum entropy principle helps us to understand the case where we condition on the rare event that $L_d$ remains in $C$. Roughly speaking and under certain assumptions, $L_d$ converges to the element in the set $C$ that minimizes the relative entropy $H(\cdot|\lambda)$ with respect to the distribution $\lambda$, and so \emph{maximizes} what is known as thermodynamic entropy. 
Being a bit more precise, the maximum entropy principle states that if $C\subset \mathscr M_1(E)$ is closed, convex, and satisfies
\[
\inf_{\nu\in C} H(\nu|\lambda) = \inf_{\nu\in C^{\circ}} H(\nu|\lambda) <\infty,
\]
where $C^{\circ}$ denotes the interior of $C$, then there is a unique probability measure $\nu_*\in C$ minimizing $H(\cdot|\lambda)$ over the set $C$. Moreover, the conditional distributions of $L_d$ converge weakly, as $d\to\infty$, to $\delta_{\nu_*}$, i.e.,
\[
\lim_{d\to\infty} \Pro[L_d\in \cdot \,|\, L_d\in C] = \delta_{\nu_*}(\cdot)
\]
in the weak topology on $\mathscr M_1(\mathscr M_1(E))$ generated by $\mathscr C_b(\mathscr M_1(E))$. Furthermore, one can show that for any $k\in\N$, the conditional distribution of $Y_k$ (conditioned on $L_d$ being in $C$) converges weakly to the relative entropy minimizing measure $\nu_*$. An application of the maximum entropy principle now shows how a Gibbs measure arises as limiting distribution, which is exactly what happens in the case of Orlicz balls.

So let $\mathscr H:E\to \R$  be a function, also referred to as Hamiltonian or energy, and consider the corresponding average energy $\overline{\mathscr H}_d := \frac{1}{d}\sum_{i=1}^d \mathscr H(Y_i)$. Moreover, define for $R < \E_\lambda[\mathscr H]$ a set of probability measures
\[
C := \Big\{\nu \in\mathscr M_1(E)\,:\, \E_\nu[\mathscr H]\leq R \Big\}.
\]
If the set $C$ satisfies the assumptions of the maximum entropy principle, then there exists a unique probability measure $\mu_{*}\in C$ minimizing the relative entropy $H(\cdot|\lambda)$ over $C$. It is given as the following Gibbs measure at ``inverse temperature'' $\alpha_*>0$:
\[
\mu_{*}(\dint x) = \frac{e^{-\alpha_*\mathscr H(x)}}{\int_E e^{-\alpha_*\mathscr H(x)}\,\lambda(\dint x)} \lambda(\dint x),
\]
where $\alpha_*$ is such that $\E_{\mu_{*}}[\mathscr H]=R$. So wrapping everything up, the maximum entropy principle says in this case that, for each $k\in\N$ fixed,
\[
\lim_{d\to\infty} \Pro\Big[Y_k \in \cdot\,\Big|\Big.\, \overline{\mathscr H}_d\leq R\Big] = \mu_{*}.
\]

We shall now explain how this relates to our situation. As already pointed out, this derivation is not mathematically rigorous, one of the reasons being that in our setting  $\lambda$ is the Lebesgue measure, which is infinite. Let $M$ be an Orlicz function and consider, for large $d$, ``random variables'' $Y_1,Y_2,\dots,Y_d$ uniformly ``distributed'' according to the infinite Lebesgue measure $\lambda$. We are interested in the volume of the Orlicz ball
\[
B_M^d(d) := \Bigg\{ (x_1,\dots,x_d) \in\R^d \,:\, \sum_{i=1}^d M(x_i) \leq d \Bigg\}.
\]
Conditioning on $Y=(Y_1,\dots,Y_d)$ being in $B_M^d(d)$ yields the uniform distribution on $B_M^d(d)$, because for any measurable $A\subset \R^d$,
\[
\Pro\Big[Y\in A\,\Big|\Big.\, Y\in B_M^d(d)\Big] = \frac{\Pro[Y\in A\cap B_M^d(d)]}{\Pro[Y\in B_M^d(d)]} = \frac{\vol_d(A\cap B_M^d(d))}{\vol_d(B_M^d(d))}.
\]
Coming back to the maximum entropy principle, where $E=\R$, the Hamiltonian is given by the Orlicz function $M$, and $R=1$ (which is smaller than $\E_\lambda[M] = \infty$), we have, for any fixed $k\in\{1,\dots,d \}$, that
\[
\Pro\Big[Y_k\in \cdot\,\Big|\Big.\, \overline{\mathscr H}_d \leq 1 \Big]
=
\Pro\Big[Y_k \in \cdot \,\Big|\Big.\, \sum_{i=1}^d M(Y_i) \leq d \Big] \approx \mu_{*},
\]
where $\alpha_*>0$ is chosen such that $\E_{\mu_{*}}[M]=1$. So, under the ``energy constraint'' that $Y$ lies in an Orlicz ball, asymptotically the coordinates of $Y$ follow a Gibbs distribution $\mu_{*}$. So when studying random vectors in Orlicz balls, then those Gibbs distributions provide the right probabilistic set-up for investigations; let us remark that a version of Sanov's theorem with infinite underlying measure $\lambda$ has been obtained in~\cite{BS2016}.

\subsection{Volumetric properties of Orlicz balls}

As shown by Z.~Kabluchko and J.~Prochno in \cite{KP2020}, volumetric properties of unit balls in Orlicz spaces can be nicely accessed through large deviations techniques. We shall present some of the results obtained and present the idea behind their proof.

The first main result (see \cite[Theorem A]{KP2020}) contains formulas for the asymptotic logarithmic volume of $B_M^d(dR)$ and for its precise asymptotic volume. The reason is that the former result follows from an exponential tilting technique coupled with the classical central limit theorem (thus being in the spirit of the proof of Cram\'er's theorem) and provides some structural insight, which is lost in the short proof of the precise asymptotic volume where ideas and results on sharp Cram\'er large deviations by V.~V.~Petrov \cite{P1965} are employed; to be more precise, for determining the exact asymptotic volume, one use elements from the proof of \cite[Theorem 1]{P1965}.

\begin{thmalpha}\label{thm:log-volume orlicz}
Let $d\in\N$, $R\in(0,\infty)$, and $M$ be an Orlicz function. Then, as $d\to\infty$,
\[
\vol_d\big(B_M^d(dR)\big)^{1/d} \to e^{\varphi(\alpha_*)-\alpha_* R},
\]
i.e., on a logarithmic scale, we have
\[
\lim_{d\to\infty} \frac 1d \log \,\vol_d\big(B_M^d(dR)\big) = \varphi(\alpha_*)-\alpha_* R,
\]
and the  precise asymptotic volume is given by
\[
\vol_d\big(B_M^d(dR)\big) \sim \frac{1}{|\alpha_*| \sqrt{2\pi d\, \sigma_*^2}  }e^{d[\varphi(\alpha_*)-\alpha_* R]},
\]
where $\varphi:(-\infty,0)\to \R$ is given by $\varphi(\alpha) = \log \int_{\R}e^{\alpha M(x)}\,\dint x$, $\alpha_*<0$ is chosen in such a way that $\varphi'(\alpha_*) = R$, and $\sigma_*^2:=\varphi''(\alpha_*)$.
\end{thmalpha}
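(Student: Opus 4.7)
The plan is to mirror the proof of Cram\'er's theorem from Section~2.2 via an exponential tilting (Esscher transform), with the tilted measure being precisely the Gibbs distribution suggested by the maximum entropy heuristic. For $\alpha<0$ define the probability measure $\mu_\alpha(\dint x) := e^{\alpha M(x) - \varphi(\alpha)}\dint x$ on $\R$; this is well-defined since coercivity of $M$ (forced by convexity, $M(0)=0$, and $M>0$ off zero) makes $\varphi(\alpha)=\log\int_\R e^{\alpha M(x)}\dint x$ finite on $(-\infty,0)$. Let $X_1,\dots,X_d$ be iid with law $\mu_{\alpha_*}$ and set $S_d:=\sum_{i=1}^d M(X_i)$. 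A direct change of measure gives
\[
\vol_d\bigl(B_M^d(dR)\bigr) = \int_{\R^d}\mathbbm 1_{\{\sum_i M(x_i)\leq dR\}}\,\dint x = e^{d\varphi(\alpha_*)}\,\E\bigl[e^{-\alpha_* S_d}\mathbbm 1_{\{S_d\leq dR\}}\bigr].
\]
I would choose $\alpha_*<0$ so that the tilted mean $\varphi'(\alpha_*)=\E[M(X_1)]$ equals $R$; uniqueness follows from strict convexity of $\varphi$, existence from $\varphi'(\alpha)\to 0$ as $\alpha\to-\infty$ and $\varphi'(\alpha)\to\infty$ as $\alpha\to 0^-$, both by monotone convergence together with coercivity of $M$.

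With this calibration, $S_d - dR$ is centered with variance $d\sigma_*^2$, and the identity rewrites as
\[
\vol_d\bigl(B_M^d(dR)\bigr) = e^{d[\varphi(\alpha_*)-\alpha_* R]}\,\E\bigl[e^{-\alpha_*(S_d-dR)}\mathbbm 1_{\{S_d\leq dR\}}\bigr].
\]
For the logarithmic asymptotic, note that on the support of the indicator, $(-\alpha_*)(S_d-dR)\leq 0$, so the integrand is bounded above by $1$, yielding the upper bound on $(1/d)\log\vol_d$. For the matching lower bound, I would restrict the expectation to the event $\{S_d - dR\in [-\sqrt d,0]\}$, on which the integrand is at least $e^{-|\alpha_*|\sqrt d}$; by the classical central limit theorem, the probability of this event converges to $\Phi(0)-\Phi(-1/\sigma_*)>0$, and since $\sqrt d = o(d)$ this gives
\[
\lim_{d\to\infty}\frac 1d\log\vol_d\bigl(B_M^d(dR)\bigr) = \varphi(\alpha_*)-\alpha_* R.
\]

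For the precise asymptotic, set $T_d := |\alpha_*|(dR - S_d)$, so that on $\{S_d\leq dR\}$ one has $T_d\geq 0$ and $e^{-\alpha_*(S_d-dR)} = e^{-T_d}$. By the CLT, $T_d/(|\alpha_*|\sigma_*\sqrt d) \Rightarrow \mathscr N(0,1)$, and invoking a local central limit theorem (which is exactly the content of Petrov's sharp Cram\'er estimates in \cite{P1965} under an appropriate non-lattice/Cram\'er condition on the law of $M(X_1)$) yields
\[
|\alpha_*|\sigma_*\sqrt d\, f_{T_d}(t) \;\longrightarrow\; \frac{1}{\sqrt{2\pi}}
\]
uniformly for $t$ in compact subsets of $[0,\infty)$, together with a Gaussian-type upper bound sufficient for dominated convergence (the factor $e^{-t}$ kills the tail uniformly in $d$). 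Consequently
\[
\E\bigl[e^{-T_d}\mathbbm 1_{\{T_d\geq 0\}}\bigr] = \int_0^\infty e^{-t}f_{T_d}(t)\,\dint t \;\sim\; \frac{1}{|\alpha_*|\sigma_*\sqrt{2\pi d}}\int_0^\infty e^{-t}\,\dint t = \frac{1}{|\alpha_*|\sqrt{2\pi d\sigma_*^2}},
\]
and multiplying by $e^{d[\varphi(\alpha_*)-\alpha_* R]}$ produces the asserted asymptotic.

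The main obstacle is the local limit theorem for $M(X_1)$ under $\mu_{\alpha_*}$: one needs enough regularity of the law of $M(X_1)$ (for instance non-arithmeticity together with a Cram\'er condition on its characteristic function) to upgrade the CLT into pointwise convergence of densities, uniformly on compact $t$-windows. For generic Orlicz $M$ this is expected to hold, but verifying it carefully -- especially when $M$ is only convex rather than smooth -- is exactly where one invokes the machinery of Petrov~\cite{P1965}, whose hypotheses are tailored to this scenario. The logarithmic asymptotic is insensitive to this delicacy and follows from the ordinary CLT alone, which is why the paper obtains it via a self-contained tilting argument before invoking sharp large deviations for the exact constant.
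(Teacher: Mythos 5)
Your proposal is essentially the same as the paper's proof: the same exponential tilting to the Gibbs density $e^{\alpha_*M(x)-\varphi(\alpha_*)}$, the same identity $\vol_d\bigl(B_M^d(dR)\bigr) = e^{d[\varphi(\alpha_*)-\alpha_*R]}\,\E\bigl[e^{-\alpha_*(S_d-dR)}\mathbbm 1_{\{S_d\leq dR\}}\bigr]$, the same CLT-window argument for the logarithmic asymptotic, and the same appeal to Petrov~\cite{P1965} for the exact subexponential prefactor. One small remark: you phrase the sharp asymptotic as a local CLT for the \emph{density} $f_{T_d}$, which implicitly assumes $M(X_1)$ admits a density and that this density obeys a local limit theorem. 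The paper's version sidesteps this by working directly with the integral $\int_{-\infty}^0 e^{-\alpha_*y}\,\mu_d(\dint y)$ and citing Petrov's Berry--Esseen-type evaluation (\cite[Eqs.~4.11 and 4.19]{P1965}), which only requires $M(X_1)$ to be non-lattice -- weaker and cleaner for a general convex $M$. Since you acknowledge precisely this subtlety and defer to Petrov as the fix, your route and the paper's coincide in substance.
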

\begin{proof}[Idea of Proof.]
\textit{1. Asymptotic volume radius.}
The function $\varphi$ from the statement is well-defined and twice continuously differentiable on $(-\infty,0)$ with
\begin{equation}\label{eq:derivative of phi}
\varphi'(\alpha) = \frac{\int_{\R} M(x) e^{\alpha M(x)}\,\dint x }{\int_{\R}e^{\alpha M(x)}\,\dint x}\,.
\end{equation}
One can check that here exists a unique $\alpha_*:=\alpha_*(R)<0$ such that $\varphi'(\alpha_*) = R$. Then one considers iid random variables $Z_i$, $i\in\N$, with density
\[
p(x) = e^{\alpha_*M(x) - \varphi(\alpha_*)}, \qquad x\in\R,
\]
and a direct computation using \eqref{eq:derivative of phi} shows that $\E[M(Z_1)] = R$ and $\Var[M(Z_1)] = \varphi''(\alpha_*)>0$. 
Then one considers a shifted version of those random variables, more precisely, the iid random variables $Y_i := M(Z_i)-R$, $i\in\N$, which satisfy $\E[Y_1] = 0$ and $\Var[Y_1] =\Var[M(Z_1)] = \varphi''(\alpha_*) =: \sigma_*^2 $. Then
\begin{align*}
\vol_d\big(B_M^d(dR)\big) & = \int_{\R^d} \mathbbm 1_{B_M^d(dR)} (x_1,\dots,x_d)\, d\lambda^d(x_1\dots,x_d) \cr
& = \int_{\R^d}  \mathbbm 1_{B_M^d(dR)} (x_1,\dots,x_d) e^{-\alpha_*\sum_{i=1}^dM(x_i) + d\varphi(\alpha_*)} \prod_{i=1}^dp(x_i)\, d\lambda(x_1)\dots d\lambda(x_d) \cr
& = \E\Big[\mathbbm 1_{B_M^d(dR)}(Z_1,\dots,Z_d) e^{-\alpha_*\sum_{i=1}^dM(Z_i)+d\varphi(\alpha_*)}\Big] \cr
& = \E\Big[\mathbbm 1_{\{\sum_{i=1}^dY_i \leq 0\}} e^{-\alpha_*\sum_{i=1}^dY_i - d\alpha_*R+d\varphi(\alpha_*)}\Big] \cr
& = e^{d\big(\varphi(\alpha_*)-\alpha_* R\big)} \E\Big[ \mathbbm 1_{\{\sum_{i=1}^dY_i \leq 0\}} e^{-\alpha_*\sum_{i=1}^dY_i}\Big].
\end{align*}

This expression can be bounded from below and above using the central limit theorem. In fact, for every $c\in(0,\infty)$,
\begin{align*}
\E\Big[ \mathbbm 1_{\big\{\sum_{i=1}^dY_i \leq 0\big\}} e^{-\alpha_*\sum_{i=1}^dY_i}\Big] \geq \E \Big[ \mathbbm 1_{\big\{-c\sqrt{d} \leq \sum_{i=1}^dY_i \leq 0\big\}} e^{c \alpha_*\sqrt{d}} \Big]
= e^{c \alpha_*\sqrt{d}} \Pro\Big[ \frac{1}{\sqrt{d}}\sum_{i=1}^dY_i\in[-c,0]\Big],
\end{align*}
where it was used that $-\alpha_*>0$, and so one has
\begin{align*}
\vol_d\big(B_M^d(dR)\big) \geq e^{d\big(\varphi(\alpha_*)-\alpha_* R\big)} e^{c \alpha_*\sqrt{d}} \Pro\Big[ \frac{1}{\sqrt{d}}\sum_{i=1}^dY_i\in[-c,0]\Big].
\end{align*}
By the central limit theorem,
\[
\Pro\Big[ \frac{1}{\sqrt{d}}\sum_{i=1}^dY_i\in[-c,0]\Big] \stackrel{d\to\infty}{\longrightarrow} \mathscr N(0,\sigma_*^2)([0,c]).
\]
Similar to the lower bound, one can obtain the upper bound, namely,
  \[
   \E\Big[ \mathbbm 1_{\big\{\sum_{i=1}^dY_i \leq 0\big\}} e^{-\alpha_*\sum_{i=1}^dY_i}\Big] = \E\Big[ \mathbbm 1_{\big\{\frac{1}{\sqrt{d}}\sum_{i=1}^dY_i \leq 0\big\}} e^{-\alpha_*\sum_{i=1}^dY_i}\Big] \leq \Pro\Big[ \frac{1}{\sqrt{d}}\sum_{i=1}^dY_i\in (-\infty,0]\Big]
  \]
and therefore,
  \[
    \vol_d\big(B_M^d(dR)\big) \leq e^{d\big(\varphi(\alpha_*)-\alpha_* R\big)} \Pro\Big[ \frac{1}{\sqrt{d}}\sum_{i=1}^dY_i\in (-\infty,0]\Big].
  \]
The central limit theorem implies that
  \[
    \Pro\Big[ \frac{1}{\sqrt{d}}\sum_{i=1}^dY_i\in (-\infty,0]\Big]\stackrel{d\to\infty}{\longrightarrow} \mathscr N(0,\sigma_*^2)((-\infty,0]) = \frac{1}{2}.
  \]
Putting things together, for any $c\in(0,\infty)$,
  \[
    e^{d\big(\varphi(\alpha_*)-\alpha_* R\big)} e^{c \alpha_*\sqrt{d}} \Pro\Big[ \frac{1}{\sqrt{d}}\sum_{i=1}^dY_i\in[-c,0]\Big] \leq \vol_d\big(B_M^d(dR)\big) \leq e^{d\big(\varphi(\alpha_*)-\alpha_* R\big)} \Pro\Big[ \frac{1}{\sqrt{d}}\sum_{i=1}^dY_i\in (-\infty,0]\Big],
  \]
which implies the result taking the $d$-th root, letting $d\to\infty$, and using the central limit theorem.
\vskip 2mm
\noindent\textit{2. Precise asymptotic volume.}  As shown above, one has
\begin{align}\label{eq:volume of B_M^d(dR)}
\vol_d\big(B_M^d(dR)\big)
& = e^{d\big(\varphi(\alpha_*)-\alpha_* R\big)} \E\Big[ \mathbbm 1_{\{\sum_{i=1}^dY_i \leq 0\}} e^{-\alpha_*\sum_{i=1}^dY_i}\Big],
\end{align}
where $Y_i:= M(Z_i)-R$, $i\in\{1,\dots,d\}$, with $Z_1,\dots, Z_d$ independent and having Lebesgue density $p(x) = \exp(\alpha_*M(x)-\varphi(\alpha_*))$, $x\in\R$.  In particular, the distribution of $Y_1$ is not concentrated on a lattice.
Recall also that $\E[Y_1]=0$ and  $\Var[Y_1] = \varphi''(\alpha_*) =: \sigma_*^2$.
Let $\mu_d$ be the distribution of $\sum_{i=1}^dY_i$. Then one has 
\[
\E\Bigg[ \mathbbm 1_{\{\sum_{i=1}^dY_i \leq 0\}} e^{-\alpha_*\sum_{i=1}^dY_i}\Bigg] = \int_{-\infty}^0 e^{-\alpha_* y}\,\mu_d(\dint y).
\]
The integral on the right-hand side is exactly the integral that appears in \cite[Equation 4.11]{P1965}, just with a different sign. As is demonstrated in Petrov's proof using a Berry--Esseen argument, for $d\to\infty$, this integral can be evaluated (see \cite[Equation 4.19]{P1965}) as follows,
\[
\int_{-\infty}^0 e^{-\alpha_* y}\,\mu_d(\dint y) \sim \frac{1}{|\alpha_*|\sqrt{2\pi d \sigma_*^2}}.
\]
Combining this with \eqref{eq:volume of B_M^d(dR)}, one obtains that, as $d\to\infty$, 
\[
\vol_d\big(B_M^d(dR)\big) \sim \frac{1}{|\alpha_*| \sqrt{2\pi d\, \sigma_*^2}  }e^{d[\varphi(\alpha_*)-\alpha_* R]},
\]
which completes the proof.
\end{proof}

In their second main result (see \cite[Theorem B]{KP2020}), the authors determine the asymptotic behavior of the volume of intersections of two Orlicz balls when the dimension tends to infinity. This generalizes a classical result of G.~Schechtman and M.~Schmuckenschl\"ager \cite{SS1991}. For the mere fact that the quantities that appear in the following statement are all well-defined, we refer the reader directly to \cite{KP2020}. 

\begin{thmalpha}\label{thm:dichotomy}
Let $M_1$ and $M_2$ be two Orlicz functions and $R_1,R_2\in(0,\infty)$. Consider
\[
\varphi_1:=\varphi_{M_1}:(-\infty,0)\to\R,\qquad \varphi_1(\alpha) = \log \int_{\R}e^{\alpha M_1(x)}\,\dint x
\]
and choose $\alpha_*<0$ such that $\varphi_1'(\alpha_*) = R_1$. Define the Gibbs density
\[
p_1(x) := e^{\alpha_*M_1(x) - \varphi_1(\alpha_*)}, \qquad x\in\R.
\]
Then, we have
\[
\frac{\vol_d\big(B_{M_{1}}^d(dR_1) \cap B_{M_2}^d(dR_2)\big)}{\vol_d\big(B_{M_1}^d(dR_1)\big)} \stackrel{d\to\infty}{\longrightarrow}
\begin{cases}
0 & :\, \int_{\R}M_2(x)p_1(x)\,\dint x>R_2 \\
1 & :\, \int_{\R}M_2(x)p_1(x)\,\dint x<R_2\,\,\text{and}\,\,\int_{\R}M_2^2(x)p_1(x)\,\dint x<\infty
\end{cases}
\]
and the speed of convergence to zero is exponential in $d$.
\end{thmalpha}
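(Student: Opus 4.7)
The plan is to rewrite both volumes in the ratio via the same exponential tilt used in the proof of Theorem \ref{thm:log-volume orlicz}, thereby reducing the dichotomy to two elementary tail estimates for iid sums. Let $Z_1,\dots,Z_d$ be iid with density $p_1(x) = e^{\alpha_* M_1(x) - \varphi_1(\alpha_*)}$ and set $Y_i := M_1(Z_i) - R_1$ and $W_i := M_2(Z_i)$. The identical change of variables used for Theorem \ref{thm:log-volume orlicz}, now carrying the extra indicator enforcing the $B_{M_2}^d(dR_2)$ constraint, yields
\[
\frac{\vol_d(B_{M_1}^d(dR_1) \cap B_{M_2}^d(dR_2))}{\vol_d(B_{M_1}^d(dR_1))} = \frac{\E\bigl[e^{-\alpha_* S_d^Y} \mathbbm{1}_{\{S_d^Y \leq 0\}} \mathbbm{1}_{\{S_d^W \leq dR_2\}}\bigr]}{\E\bigl[e^{-\alpha_* S_d^Y} \mathbbm{1}_{\{S_d^Y \leq 0\}}\bigr]},
\]
where $S_d^Y := \sum_{i=1}^d Y_i$ and $S_d^W := \sum_{i=1}^d W_i$. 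By the Petrov-based sharp asymptotic from Theorem \ref{thm:log-volume orlicz}, the denominator is equivalent to $(|\alpha_*|\sqrt{2\pi d \sigma_*^2})^{-1}$, hence of order $d^{-1/2}$, and on $\{S_d^Y \leq 0\}$ the weight $e^{-\alpha_* S_d^Y}$ lies in $(0,1]$ since $-\alpha_* > 0$.

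For the case $\int_{\R} M_2(x) p_1(x)\,\dint x > R_2$, I would observe that $\E[W_1] > R_2$, so $\{S_d^W \leq dR_2\}$ is a lower-tail large deviations event. Although $W_1$ need not possess finite positive exponential moments, the Chernoff bound on the lower tail only uses $\E[e^{-t W_1}] \leq 1$ for $t \geq 0$, which is automatic since $W_i \geq 0$; the derivative of $t \mapsto tR_2 + \log \E[e^{-tW_1}]$ at $t=0$ is $R_2 - \E[W_1] < 0$, so $\Pro[S_d^W \leq dR_2] \leq e^{-\kappa d}$ for some $\kappa > 0$. Bounding the numerator crudely by this probability via $e^{-\alpha_* S_d^Y}\mathbbm{1}_{\{S_d^Y \leq 0\}} \leq 1$ gives a ratio of order $\sqrt{d}\, e^{-\kappa d}$, yielding the claimed exponential convergence to $0$. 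For the complementary case $\int_{\R} M_2(x) p_1(x)\,\dint x < R_2$ with $\int_{\R} M_2^2(x) p_1(x)\,\dint x < \infty$, I would show the difference between the expectations in numerator and denominator is $o(d^{-1/2})$. That difference equals $\E\bigl[e^{-\alpha_* S_d^Y} \mathbbm{1}_{\{S_d^Y \leq 0,\, S_d^W > dR_2\}}\bigr]$, and the same $\leq 1$ bound on the weight dominates it by $\Pro[S_d^W > dR_2]$. Since $R_2 - \E[W_1] > 0$ and $\Var[W_1] < \infty$, Chebyshev gives $\Pro[S_d^W/d > R_2] = O(d^{-1})$, so the relative error against the denominator is $O(d^{-1/2}) \to 0$ and the ratio tends to $1$.

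The key enabler, without which the plan does not close, is the sharp $\sim c\, d^{-1/2}$ asymptotic for the denominator: since the numerator is controlled only by crude tail bounds on $S_d^W$, one needs the denominator's precise rate to beat those bounds. Fortunately this is already available from Theorem \ref{thm:log-volume orlicz}, so no additional sharp large deviations estimate is required, and the whole argument reduces to one Chernoff bound (Case 1) and one Chebyshev inequality (Case 2), coupled with the uniform upper bound $e^{-\alpha_* S_d^Y}\mathbbm{1}_{\{S_d^Y \leq 0\}} \leq 1$.
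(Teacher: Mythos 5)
Your proof is correct and follows essentially the same strategy as the paper: tilt to $Z_i\sim p_1$, express both volumes as tilted expectations, bound the intersection's extra indicator by crude tail bounds (Chernoff for the lower-tail/Case 1, Chebyshev for the upper-tail/Case 2), and let the sharp $d^{-1/2}$ asymptotic for the denominator absorb the error. The only cosmetic differences are that you work with $W_i=M_2(Z_i)$ rather than the centered $Y_i^{(2)}=M_2(Z_i)-\E[M_2(Z_1)]$, and you invoke the sharp Petrov asymptotic for the denominator in both cases whereas the paper uses the cruder $e^{c\alpha_*\sqrt d}$ lower bound in Case~1 — both are adequate since the $e^{-\kappa d}$ decay dominates either.
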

\begin{proof}[Idea of Proof.]
Let $Z_1,\dots,Z_d$ be iid random variables with distribution given by the density
\[
p_1(x) := e^{\alpha_*M_1(x) - \varphi_1(\alpha_*)}, \qquad x\in\R,
\]
where for $\alpha<0$, we have
\[
\varphi_1(\alpha) := \log \int_{\R}e^{\alpha M_1(x)}\,\dint x
\]
and where $\alpha_*<0$ is now chosen such that $\varphi_1'(\alpha_*) = R_1$.
Then
\begin{align*}
& \vol_d\big(B_{M_1}^d(dR_1)\cap B_{M_2}^d(dR_2)\big) \cr
& = \int_{\R^d} \mathbbm 1_{B_{M_1}^d(dR_1)} (x_1,\dots,x_d)\mathbbm 1_{B_{M_2}^d(dR_2)} (x_1,\dots,x_d)\, d\lambda^d(x_1\dots,x_d) \cr
& = \int_{\R^d}  \mathbbm 1_{B_{M_1}^d(dR_1)} (x_1,\dots,x_d) \mathbbm 1_{B_{M_2}^d(dR_2)}(x_1,\dots,x_d)  e^{-\alpha_*\sum_{i=1}^dM_1(x_i) + d\varphi_1(\alpha_*)} \prod_{i=1}^dp_1(x_i)\, d\lambda(x_1)\dots d\lambda(x_d) \cr
& = \E\Big[\mathbbm 1_{B_{M_1}^d(dR_1)}(Z_1,\dots,Z_d) \mathbbm 1_{B_{M_2}^d(dR_2)}(Z_1,\dots,Z_d) e^{-\alpha_*\sum_{i=1}^dM_1(Z_i)+d\varphi_1(\alpha_*)}\Big].
\end{align*}
One now modifies the probabilistic argument seen before and defines random variables $Y_1^{(1)},\dots,Y_d^{(1)}$ and $Y_1^{(2)},\dots,Y_d^{(2)}$ letting
\[
Y_i^{(1)} := M_1(Z_i) - R_1 \qquad\text{and}\qquad Y_i^{(2)}:= M_2(Z_i) - \int_{\R}M_2(x)p_1(x)\,\dint x
\]
for $i\in\{1,\dots,d\}$. Then $Y_1^{(1)},\dots,Y_d^{(1)}$ are independent and also $Y_1^{(2)},\dots,Y_d^{(2)}$ are independent. Moreover, $\E[Y_1^{(1)}] = 0 = \E[Y_1^{(2)}]$ and $\Var[Y_1^{(1)}] = \varphi_1''(\alpha_*)$ while $\Var[Y_1^{(2)}] = \E[(Y_1^{(2)})^2] = \Var[M_2(Z_1)]$. Using those transformations of the original random variables, one can write
\begin{align*}
\vol_d\big(B_{M_1}^d(dR_1)\cap B_{M_2}^d(dR_2)\big) & = \E\Bigg[\mathbbm 1_{\big\{\sum_{i=1}^dY_i^{(1)}\leq 0 \big\}} \mathbbm 1_{\big\{\sum_{i=1}^dY_i^{(2)} \leq d[R_2 - \int_{\R}M_2(x)p_1(x)\,\dint x] \big\}} e^{-\alpha_*\sum_{i=1}^dY_i^{(1)} - d\alpha_*R_1+d\varphi_1(\alpha_*)}\Bigg] \cr
& =e^{d[\varphi_1(\alpha_*) - \alpha_*R_1]} \E\Bigg[\mathbbm 1_{\big\{\sum_{i=1}^dY_i^{(1)}\leq 0 \big\}} \mathbbm 1_{\big\{\sum_{i=1}^dY_i^{(2)} \leq d[R_2 - \int_{\R}M_2(x)p_1(x)\,\dint x] \big\}} e^{-\alpha_*\sum_{i=1}^dY_i^{(1)}}\Bigg].
\end{align*}
The idea is that  by the strong law of large numbers,
\[
\frac{1}{d}\sum_{i=1}^d Y_i^{(2)}\stackrel{a.s.}{\longrightarrow} 0,\qquad\text{as }\,d\to\infty.
\]
Thus, one can see that, as $d\to\infty$, the event
\[
\Bigg\{\sum_{i=1}^dY_i^{(2)} \leq d\Big[R_2 - \int_{\R}M_2(x)p_1(x)\,\dint x\Big] \Bigg\}
\]
occurs with probability going to $0$ (even exponentially fast) if $\int_{\R}M_2(x)p_1(x)\,\dint x>R_2$ and probability going to $1$ if
$\int_{\R}M_2(x)p_1(x)\,\dint x<R_2$.

Consider the case $\int_{\R}M_2(x)p_1(x)\,\dint x>R_2$. To obtain an upper bound for the volume of the intersection one observes that
$$
\E\Bigg[\mathbbm 1_{\big\{\sum_{i=1}^dY_i^{(1)}\leq 0 \big\}} \mathbbm 1_{\big\{\sum_{i=1}^dY_i^{(2)} \leq d[R_2 - \int_{\R}M_2(x)p_1(x)\,\dint x] \big\}} e^{-\alpha_*\sum_{i=1}^dY_i^{(1)}}\Bigg]
\leq
\Pro\Bigg[\sum_{i=1}^dY_i^{(2)} \leq d[R_2 - \int_{\R}M_2(x)p_1(x)\,\dint x]\Bigg],
$$
which goes to $0$ exponentially fast by Cram\'er's theorem (see \cite{DZ2010}) since $\E[Y_1^{(2)}]=0$ and the negative exponential moments are finite for $Y_1^{(2)}$. It follows from the lower bound for $\vol_d\big(B_{M_1}^d(dR_1)\big)$ in the proof of Theorem~\ref{thm:log-volume orlicz} that, as $d\to\infty$,
\begin{equation}\label{eq:ratio intersection}
\frac{\vol_d\big(B_{M_1}^d(dR_1)\cap B_{M_2}^d(dR_2)\big)}{\vol_d\big(B_{M_1}^d(dR_1)\big)}
\end{equation}
goes to $0$ at an exponential rate by Cram\'er's theorem. 

Next one considers the case $\int_{\R}M_2(x)p_1(x)\,\dint x < R_2$. Then
\begin{equation*}
\eqref{eq:ratio intersection}= 1- \frac{ \E\Bigg[\mathbbm 1_{\big\{\sum_{i=1}^dY_i^{(1)}\leq 0 \big\}} \mathbbm 1_{\big\{\sum_{i=1}^dY_i^{(2)} > d[R_2 - \int_{\R}M_2(x)p_1(x)\,\dint x] \big\}} e^{-\alpha_*\sum_{i=1}^dY_i^{(1)}}\Bigg]}{ \E\Bigg[\mathbbm 1_{\big\{\sum_{i=1}^dY_i^{(1)}\leq 0 \big\}} e^{-\alpha_*\sum_{i=1}^dY_i^{(1)}}\Bigg]}.
\end{equation*}
One needs to show that the quotient of expectations on the right-hand side goes to $0$. To this end, one observes that the expectation in the numerator can be estimated as follows,
$$
\E\Bigg[\mathbbm 1_{\big\{\sum_{i=1}^dY_i^{(1)}\leq 0 \big\}} \mathbbm 1_{\big\{\sum_{i=1}^dY_i^{(2)} > d[R_2 - \int_{\R}M_2(x)p_1(x)\,\dint x] \big\}} e^{-\alpha_*\sum_{i=1}^dY_i^{(1)}}\Bigg]
\leq
\Pro\Bigg[\sum_{i=1}^dY_i^{(2)} > d[R_2 - \int_{\R}M_2(x)p_1(x)\,\dint x]\Bigg],
$$
which can be estimated above by $O(1/d)$ via Chebyshev's inequality. On the other hand, the proof of Theorem \ref{thm:log-volume orlicz} shows that 
\[
\E\Bigg[ \mathbbm 1_{\{\sum_{i=1}^dY_i^{(1)} \leq 0\}} e^{-\alpha_*\sum_{i=1}^dY_i^{(1)}}\Bigg]\sim \frac{1}{|\alpha_*|\sqrt{2\pi d \sigma_*^2}},
\]
where $\sigma_*^2=\varphi_1''(\alpha_*)$.
Therefore, one obtains that the ratio in \eqref{eq:ratio intersection} goes to $1$ as $d\to\infty$, and thus one obtains the desired dichotomy.
\end{proof}

\begin{rmk}
To obtain the phase transition in Theorem \ref{thm:dichotomy}, we have seen that it is enough to know the asymptotic logarithmic volume of Orlicz balls and their intersections. To deal with the critical case at the threshold, however, the precise asymptotics would have been needed. This was an open problem, which has recently been solved by L.~Frühwirth and J.~Prochno \cite{FP2024_sharp}. In fact, the authors proved (under some mild conditions on the Orlicz functions) a number of sharp concentration phenomena in Orlicz spaces, which imply that 
\[
\frac{\vol_d\big(B_{M_{1}}^d(dR_1) \cap B_{M_2}^d(dR_2)\big)}{\vol_d\big(B_{M_1}^d(dR_1)\big)} \stackrel{d\to\infty}{\longrightarrow} \frac{1}{2}.
\]
\end{rmk}

\subsection{Sanov-type large deviations}

We shall briefly discuss here another result concerning large deviations in the world of Orlicz spaces, namely a Sanov-type large deviations principle for the empirical measure of the coordinates of vectors chosen uniformly at random from Orlicz balls. This result has been obtained by L.~Frühwirth and J.~Prochno in \cite{FP2024}; for the conditional limit theorems that can be deduced from the level--2 large deviations result and also a more general version of the result presented below, we refer the reader directly to \cite{FP2024}. 

Again, if $M: \R \rightarrow [0,\infty)$ is an Orlicz function and $ \alpha \in (- \infty, 0)$, we shall consider the corresponding Gibbs measure $\mu_{M,\alpha}$ on the Borel sets of $\R$ given by 
  \begin{equation*} 
    \mu_{M,\alpha}(\dint x) := \frac{e^{\alpha M(x)}}{\int_{\R}e^{\alpha M(t)}\,\lambda(\dint t)} \,\lambda(\dint x).
  \end{equation*}
For $\mu \in \mathscr{M} _1( \R )$, we define the Orlicz-moment-mapping 
  \begin{equation*}
    \mathbb M_M( \mu ) := \int_{\R} M(x)  \mu (dx).
  \end{equation*}

\begin{thmalpha}
	\label{ThmLDPEmpMeasuresOrlicz}
	Let $M: \R \rightarrow [0,\infty)$ be an Orlicz function, $R\in(0,\infty)$, and consider a sequence of random vectors $X^{(d)} := \big ( X^{(d)}_1,\ldots, X^{(d)}_d \big )\sim \Uni (B_M^d(dR))$, $d\in\N$. Then, there exists a unique $\alpha^* \in (-\infty,0)$ such that the sequence of empirical measures
	\begin{equation*}
	\label{EqEmpMeasureOrlicz}
	L_d := \frac{1}{d} \sum_{i=1}^{d} \delta_{ X^{(d)}_i} ,\quad{ }  d \in \N ,
	\end{equation*}
	satisfies an LDP in $\mathscr{M}_1( \R )$ with the strictly convex good rate function $ \mathbb{I}_M: \mathscr{M}_1(\R) \rightarrow [0,\infty]$, where
	\begin{equation*}
	\label{EqDefGRFEmpMeasOrlicz}
	\mathbb{I}_M(\mu) := \begin{cases} 
	H( \mu | \mu_{ M, \alpha^* }) + \alpha^* \big[ \mathbb M_M( \mu ) -R  \big] & : \mathbb M_M( \mu ) \leq R \\
	\infty &  : \text{otherwise}.
	\end{cases}
	\end{equation*}
\end{thmalpha}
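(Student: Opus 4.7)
The plan is to reduce the problem to Sanov's theorem via an exponential tilt of the underlying Lebesgue reference measure, following the same spirit as the proof of Theorem \ref{thm:log-volume orlicz}. First, I would introduce $\varphi:(-\infty,0)\to\R$, $\varphi(\alpha)=\log\int_\R e^{\alpha M(x)}\,\dint x$, observe that $\varphi$ is strictly convex with $\varphi'(\alpha)=\int_\R M\,\dint\mu_{M,\alpha}$, and verify that $\varphi'(\alpha)\to 0$ as $\alpha\to-\infty$ (by monotone convergence, using $M(0)=0$) while $\varphi'(\alpha)\to\infty$ as $\alpha\uparrow 0$. This produces a unique $\alpha^*=\alpha^*(R)\in(-\infty,0)$ with $\varphi'(\alpha^*)=R$, which is the inverse-temperature parameter appearing in the statement.

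Next, let $Z_1,\ldots,Z_d$ be iid with distribution $\mu_{M,\alpha^*}$ and write $L_d^Z=\tfrac1d\sum_{i=1}^d\delta_{Z_i}$ for their empirical measure. Rewriting the uniform measure on $B_M^d(dR)$ against the product density $\prod_{i=1}^d e^{\alpha^* M(x_i)-\varphi(\alpha^*)}$ yields the exact identity
\[
\Pro[L_d\in B]=\frac{e^{d\varphi(\alpha^*)}}{\vol_d(B_M^d(dR))}\,\E\Bigl[\mathbbm 1_B(L_d^Z)\,\mathbbm 1_{\{\mathbb M_M(L_d^Z)\le R\}}\,e^{-d\alpha^*\mathbb M_M(L_d^Z)}\Bigr],
\]
valid for every Borel set $B\subset\mathscr M_1(\R)$, with the crucial feature that the random part depends on $(Z_1,\ldots,Z_d)$ only through $L_d^Z$. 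Combined with the logarithmic volume asymptotic $\tfrac1d\log\vol_d(B_M^d(dR))=\varphi(\alpha^*)-\alpha^*R+o(1)$ from Theorem \ref{thm:log-volume orlicz}, this reduces the proof of the LDP to a Laplace-type analysis of the right-hand expectation.

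By Sanov's theorem (Section \ref{sec:sanovs ldp theorem}) applied to $(Z_i)_{i\in\N}$, the sequence $(L_d^Z)_{d\in\N}$ satisfies an LDP on $\mathscr M_1(\R)$ with speed $d$ and good rate function $H(\cdot\,|\,\mu_{M,\alpha^*})$. Combining this LDP with the weighting factor $\mathbbm 1_{\{\mathbb M_M\le R\}}\,e^{-d\alpha^*\mathbb M_M}$ via a Varadhan-type argument would yield
\[
\lim_{d\to\infty}\tfrac1d\log\Pro[L_d\in B]=-\inf_{\mu\in B,\,\mathbb M_M(\mu)\le R}\bigl\{H(\mu|\mu_{M,\alpha^*})+\alpha^*[\mathbb M_M(\mu)-R]\bigr\},
\]
which equals $-\inf_{\mu\in B}\mathbb{I}_M(\mu)$ once the constant $-\alpha^*R$ coming from the volume asymptotic is absorbed. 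Strict convexity of $\mathbb{I}_M$ on its effective domain $\{\mathbb M_M\le R\}$ is then inherited from that of $\nu\mapsto H(\nu|\mu_{M,\alpha^*})$, and goodness follows because the effective domain is closed in the weak topology (since $\mathbb M_M$ is lower-semicontinuous, $M$ being continuous and nonnegative) and $H(\cdot|\mu_{M,\alpha^*})$ already has compact sublevel sets.

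The main obstacle is the last step: since $M$ is unbounded, the functional $\mu\mapsto\mathbb M_M(\mu)$ is only lower-semicontinuous in the weak topology, so Varadhan's integral lemma cannot be invoked verbatim. I would handle this in the spirit of the strategy outlined in Section \ref{Sec:strategy}: first upgrade the Sanov LDP to a joint LDP for the pair $(L_d^Z,\mathbb M_M(L_d^Z))$ on $\mathscr M_1(\R)\times\R$, so that both the indicator and the exponential weight become continuous in the product topology, and only afterwards apply the contraction principle (Lemma \ref{lem:contraction principle}) in combination with the volume asymptotic. For the upper bound, the weight $e^{-d\alpha^*\mathbb M_M}$ is uniformly bounded by $e^{d|\alpha^*|R}$ on the effective domain, which together with exponential tightness inherited from Sanov closes the estimate; for the lower bound, one picks a near-optimal $\mu_0\in B$ and uses a weak-topology neighbourhood of $\mu_0$ on which $\mathbb M_M$ is close to $\mathbb M_M(\mu_0)$ to produce the matching bound.
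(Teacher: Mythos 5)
Your route is genuinely different from the paper's. The paper proves this theorem (due to Frühwirth and Prochno) by combining the exponential tilting/change-of-measure idea from the proof of Theorem~\ref{thm:log-volume orlicz} with an abstract Gärtner--Ellis theorem on locally convex spaces (\cite[Corollary~4.16.14]{DZ2010}), i.e.\ by computing the limit $\Lambda(f)=\lim_{d\to\infty}\frac1d\log\E[e^{d\langle f,L_d\rangle}]$ for $f$ in a suitable dual space and identifying the rate function as its Legendre transform. You instead apply the tilt at the level of the reference measure, invoke Sanov's theorem for the iid Gibbs-distributed $Z_i$'s, and then run a Varadhan/contraction argument on the weighted and truncated expectation. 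Your change-of-measure identity is correct, the identification of $\alpha^*$ via $\varphi'(\alpha^*)=R$ is correct, and your final bookkeeping does reproduce $\mathbb I_M$. The Gärtner--Ellis route has the advantage that the rate function appears automatically as a Legendre transform and the approach upgrades naturally to Wasserstein topologies (which the paper also pursues); your route is closer in spirit to how the level--2 LDPs in Section~\ref{sec:ldps in schatten classes} are proved.

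There is, however, a genuine gap in the step you correctly identify as the main obstacle. Proving the joint LDP for $(L_d^Z,\mathbb M_M(L_d^Z))$ is not a routine upgrade of Sanov: as the paper explicitly warns in Section~\ref{Sec:strategy} (in the Schatten context), the rate function for this pair does \emph{not} agree with what a naive application of the contraction principle would give. Because mass can escape to infinity while keeping $\mathbb M_M(L_d^Z)$ finite, the correct joint rate function is not supported on the graph $\{u=\mathbb M_M(\mu)\}$; for $u>\mathbb M_M(\mu)$ it has a linear ``escape-cost'' term, here with slope $-\alpha^*=|\alpha^*|$ coming from the tail $e^{\alpha^* M(x)}$ of $\mu_{M,\alpha^*}$. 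You cite the Schatten strategy but do not establish or even write down this joint rate function. In this particular problem it so happens that the escape cost $|\alpha^*|(u-\mathbb M_M(\mu))$ exactly cancels the gain $-\alpha^*u$ from the exponential weight, so both the naive and the correct rate yield the same final answer; but the Varadhan upper bound requires the \emph{correct} joint rate function to be valid as an LDP upper bound, and that must be proved. Relatedly, your upper-bound remark — that bounding the weight uniformly by $e^{d|\alpha^*|R}$ ``closes the estimate'' — is, as stated, only sufficient to verify the Varadhan moment condition, not to deliver the tight exponent: on its own it gives $\limsup\frac1d\log\E[\cdots]\le -\alpha^*R-\inf H$, which is strictly weaker than the target $-\inf\{H+\alpha^*\mathbb M_M\}$ whenever the optimal $\mu$ has $\mathbb M_M(\mu)<R$. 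So the program is viable and parallels known arguments, but the heart of the matter — the joint LDP with the correct (non-naive) rate function — is exactly what must be supplied.
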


\begin{rmk}
Very roughly speaking, in the proof of Theorem \ref{ThmLDPEmpMeasuresOrlicz} one combines ideas from the proof of Theorem \ref{thm:log-volume orlicz} with a general version of the Gärtner--Ellis theorem (we presented a classical version as Theorem \ref{thm:gaertnerellis}) that can be found, for instance, in \cite[Corollary 4.16.14]{DZ2010}. The latter reads as follows: 
let $( \xi_n)_{n \in \N}$ be a sequence of exponentially tight random variables on the locally convex Hausdorff topological vector space $\mathcal{E}$. Suppose the Gärtner--Ellis limit
\begin{equation*}
\Lambda( \lambda ):= \lim_{n \rightarrow \infty } \frac{1}{n} \log \mathbb{E} \Big [ e^{ n  \lambda ( \xi_n ) } \Big ], \quad \lambda \in \mathcal{E}^{*},
\end{equation*}
exists in $\R $ and is Gateaux-differentiable, where $\mathcal{E}^{*}$ is the topological dual of $\mathcal{E}$. Then, $( \xi_n)_{n \in \N}$ satisfies an LDP in $\mathcal{E}$ with the convex good rate function $\Lambda^{*}: \mathcal{E} \rightarrow [0, \infty]$, given by
\begin{equation*}
\Lambda^{*}( x) := \sup_{ \lambda \in \mathcal{E}^{*} } \big [  \lambda(x) - \Lambda( \lambda )  \big ].
\end{equation*}
\end{rmk}

\begin{rmk}
Beyond the Sanov-type LDP on $\mathscr{M}_1( \R )$ endowed with the weak topology, the LDP can actually be obtained on generalized Wasserstein spaces, which is indeed essential for the proof of the conditional limit theorem in \cite{FP2024}.
\end{rmk}

\section{Large deviations results under an asymptotic thin-shell condition}

Let us close this survey with a wonderful paper by S.~S.~Kim, Y.-T.~Liao, and K.~Ramanan \cite{KLR2019} that establishes LDPs under asymptotic thin-shell-type conditions. As already mentioned in the introduction, this can been seen as a large deviations counterpart to Klartags CLT for convex bodies. Whereas the CLT for convex bodies shows that fluctuations of most random projections of high-dimensional vectors satisfying a thin-shell condition can be characterized as almost Gaussian, the work \cite{KLR2019} characterizes tail behavior (at
the level of annealed LDPs) for projections and their associated norms onto, possibly growing, random subspaces of high-dimensional random vectors satisfying an asymptotic thin-shell condition. In particular, the authors' work unites disparate examples for LDPs in geometric functional analysis under a common framework, goeing beyond the specific setting of distributions on $\ell_p^d$-balls. Probably most important in this respect are the cases of LDPs for some general classes of Gibbs distribution with super-quadratic potentials and random vectors in super-quadratic Orlicz balls. 


Before presenting some of the results in detail, let us be a bit more specific about the contributions of \cite{KLR2019}. For any sequence of random vectors $(X^{(n)})_{n\in\N}$ whose scaled Euclidean norms satisfy a certain LDP (denoted by Assumption $A$ and its specific case Assumption $A^*$ in \cite{KLR2019}), the authors characterize
the tail behavior of the corresponding sequence of orthogonal projections of $X^{(n)}$ onto a random
$k_n$-dimensional basis, $k_n \leq n$, drawn with respect to the Haar measure on the Stiefel manifold $\mathbb V_{n,k_n}$ of orthonormal $k_n$-frames in $\R^n$, as the dimension $n\to\infty$ with $k_n/n\to \lambda\in [0, 1]$. 
A slightly stronger version of the LDP Assumption A, where one uses that the good rate function in the LDP has a unique minimum, can be viewed as an asymptotic thin-shell condition, in the sense that it implies that, for all sufficiently large $n\in\N$, the random vector $X^{(n)}$ satisfies the classical thin-shell condition \cite[Equation (1)]{ABP2003} (see the discussion in \cite[Section 2.1]{KLR2019}). It is important to note, however, that for growing subspaces, in contrast to CLT results where approximate Gaussian marginals (in bounded-Lipschitz distance) occur only for $k_n < 2 \log n/\log \log n$ \cite{ME2012} (or $k_n\sim n^\alpha$  if one assumes additional regularity of $X^{(n)}$ such as log-concavity \cite{K2007_power-law}), the annealed LDPs indicate three crucial regimes for subspace dimensions $(k_n)_{n\in\N}$, namely, constant, sublinear, and linear; the specific case of $\ell_p^n$-balls had been treated in \cite{APT2018}.

\subsection{The crucial regimes and asymptotic thin-shell-type assumptions}\label{subsec:regimes and conditions}

Before presenting some of the main results of \cite{KLR2019}, we introduce the mathematical set-up and the asymptotic-thin-shell-type conditions that are used. For $n\in\N$, we consider a random vector $X^{(n)}$ in $\R^n$, and for $k\in\N$, shall denote by $\id_k$ the $k\times k$ identity matrix. The Stiefel manifold of orthonormal $k$-frames in $\R^n$ shall be denoted by $\mathbb V_{n,k}$ and we recall that $\mathbb V_{n,k} = \{ A\in\R^{n\times k} \,:\, A^TA=\id_k \}$. 

We are interested in orthogonal projections of vectors $X^{(n)}\in\R^n$ onto random $k_n$-dimensional subspaces, where $1\leq k_n\leq n$. So for fixed $n\in\N$ and $1\leq k_n\leq n$, we let $A_{n,k_n}=(A_{n,k_n}(i,j))_{i,j=1}^{n,k_n}$ be an $n\times k_n$ random matrix  drawn with respect to the Haar probability measure on $\mathbb V_{n,k_n}$; we assume this random matrix is independent of $X^{(n)}$ and that the random objects are defined on the same probability space. The random matrix $A_{n,k_n}^T$ then linearly projects $X^{(n)}$ in $\R^n$ onto a $k_n$-dimensional vector. The goal will be to analyze the large deviations behavior of the coordinates of $A_{n,k_n}^TX^{(n)}$ (as $n\to\infty$) in the following three regimes: for a sequence $(k_n)_{n\in\N}\in\N^{\N}$, we say
\vskip 2mm
(1)  $(k_n)_{n\in\N}$ is \emph{constant} at $k\in\N$, denoted $k_n\equiv k$, if and only if $k_n=k$ for all $n\in\N$;
\vskip 1mm
(2)  $(k_n)_{n\in\N}$  \emph{grows sub-linearly}, denoted $1\ll k_n\ll n$, if and only if $k_n\to\infty$ and $\frac{k_n}{n}\to 0$;
\vskip 1mm
(3)  $(k_n)_{n\in\N}$  \emph{grows linearly} with rate $\lambda\in(0,1]$, denoted $k_n\sim \lambda n$, if and only if $\frac{k_n}{n}\to \lambda$.
\vskip 2mm

When the subspace dimensions $(k_n)_{n\in\N}$ are constant at $k\in\N$, then one clearly may investigate the large deviations behavior of the sequence $A_{n,k_n}^TX^{(n)}=A_{n,k}^TX^{(n)}$, $n\in\N$, of random vectors in $\R^k$. However, when the subspace dimensions $(k_n)_{n\in\N}$ tend to infinity (as is the case in the other two regimes), then in order to be able to even pose a sensible LDP question, one must first embed the random sequence $(A_{n,k_n}^TX^{(n)})_{n\in\N}$ of vectors in changing dimensions into a common topological space. Now, since the law of $A_{n,k_n}$ is invariant under permutation of its $k_n$ columns (which means that the coordinates of $A_{n,k_n}^TX^{(n)}$ are exchangeable), the essential distributional properties of the coordinates of the projection are encoded by the empirical measures of the coordinates of the projection. A suitable setting to prove LDPs is therefore the one of empirical measures of the coordinates of $A_{n,k_n}^TX^{(n)}$, $n\in\N$, more precisely, 
  \[
    L_{k_n} := \frac{1}{k_n} \sum_{j=1}^{k_n} \delta_{(A_{n,k_n}^TX^{(n)})_j}, \qquad n\in\N. 
  \]
  
We can now present (some of) the thin-shell-type conditions the authors introduced:
\vskip 2mm
\textbf{Assumption A.}  The random sequence $(\| X^{(n)}\|_2/\sqrt{n})_{n\in\N}$ satisfies an LDP in $\R$ at speed $s_n$ with good rate function $\mathbb J_X:\R\to [0,\infty]$.
\vskip 1mm
\textbf{Assumption A$^*$.}  The random sequence $(\| X^{(n)}\|_2/\sqrt{n})_{n\in\N}$ satisfies an LDP in $\R$ at speed $n$ with good rate function $\mathbb J_X:\R\to [0,\infty]$.
\vskip 1mm
\textbf{Assumption B.}  There exists a positive sequence $(s_n)_{n\in\N}$ with $\lim_{n\to\infty} s_n = \lim_{n\to\infty} n/s_n=\infty$ such that the random sequence $(\sqrt{s_n} \| X^{(n)}\|_2/n)_{n\in\N}$ satisfies an LDP in $\R$ at speed $s_n$ with good rate function $\mathbb J_X:\R\to [0,\infty]$.
\vskip 2mm

\begin{rmk}
Assumptions $A^*$ and $B$ are modifications of Assumption $A$, which are required to deal with different situations or regimes of subspace dimensions; in fact, the authors introduce another assumption, namely Assumption $C$, which is a refinement of Assumption $A$ required in the sub-linear regime whenever Assumption $A^*$ is not met, but we shall not discuss this here and refer to \cite[Subsection 2.3.1]{KLR2019} directly. 
\end{rmk}

\begin{rmk}
The specific case $s_n=n$, which is referred to as Assumption $A^*$, includes a number of distributions of interest in geometric functional analysis, e.g., certain types of product measures, uniform distributions on $n^{1/p}\B_p^n$ for $p\geq 2$ or, more generally, uniform distributions on super-quadratic (scaled) Orlicz balls and classes of Gibbs distributions with super-quadratic potential.

This already hints at the fact that Assumption $A^*$ is no longer met when looking at $n^{1/p}\B_p^n$ for $1\leq p<2$, which explains in part the appearance of the more general Assumption $A$; for more details we refer to \cite[Remark 2.3]{KLR2019}.
\end{rmk}

\subsection{Main result in the constant regime}

In the constant regime one requires the two assumptions Assumption $A^*$ and Assumption $B$ introduced above. The following theorem corresponds to \cite[Theorem 2.7]{KLR2019}.

  \begin{thmalpha}\label{thm:klr constant regime}
    Let $k\in\N$ and assume that $(k_n)_{n\in\N}\in\N^{\N}$ satisfies $k_n\equiv k$. If either Assumption $A^*$ or Assumption $B$ with speed $(s_n)_{n\in\N}$ are met with a good rate function $\mathbb J_X$, then $(A^T_{n,k}X^{(n)}/\sqrt{n})_{n\in\N}$ satisfies an LDP in $\R^k$ at speed $(s_n)_{n\in\N}$ with good rate function $\mathbb I_k:\R^k \to [0,\infty]$ given by
      \[
        \mathbb I_k (x) := 
        \begin{cases}
          \inf_{c\in(0,1)} \Big[ \mathbb J_X(\|x\|_2/c) - \frac{1}{2}\log(1-c^2)  \Big] &: \text{under Assumption $A^*$} \cr
           \inf_{c>0} \Big[ \mathbb J_X(\|x\|_2/c) + \frac{c^2}{2}  \Big] &: \text{under Assumption $B$}.
        \end{cases}
      \]
  \end{thmalpha}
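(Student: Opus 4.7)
The plan is to exploit the $O(n)$-invariance of the Haar measure on $\mathbb V_{n,k}$ in order to factor the projection $A_{n,k}^T X^{(n)}$ into a ``length'' $\|X^{(n)}\|_2$ and an independent ``directional'' piece whose law depends only on the geometry of the sphere, and to then deduce the stated LDP from the product of two independent LDPs via the contraction principle.

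Since $A_{n,k}$ is independent of $X^{(n)}$ and its distribution is invariant under left multiplication by any $U\in O(n)$, a conditioning argument on $X^{(n)}$ followed by rotating the unit vector $X^{(n)}/\|X^{(n)}\|_2$ to $e_1$ yields the distributional identity
\[
A_{n,k}^T X^{(n)} \stackrel{d}{=} \|X^{(n)}\|_2\, V_{n,k},
\]
where $V_{n,k}$ denotes the first $k$ coordinates of a uniform random vector on $\Sph^{n-1}\subset \R^n$, independent of $\|X^{(n)}\|_2$ (equivalently, the first row of a Haar-distributed Stiefel matrix). With $R_n := \|X^{(n)}\|_2/\sqrt{n}$ one obtains $A_{n,k}^T X^{(n)}/\sqrt{n} \stackrel{d}{=} R_n V_{n,k}$. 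Because $|V_{n,k}|^2 \sim \mathrm{Beta}(k/2,(n-k)/2)$ with rotationally invariant angular component, the radial density of $V_{n,k}$ is explicit, and standard Laplace asymptotics on it yield two auxiliary LDPs: at speed $n$, the sequence $(V_{n,k})_{n\in\N}$ satisfies an LDP in $\R^k$ with good rate function $\mathbb I_V(v)=-\tfrac12\log(1-|v|^2)$ on the open unit ball and $+\infty$ outside; and at speed $s_n$, the moderately rescaled sequence $\hat V_{n,k}:=\sqrt{n/s_n}\,V_{n,k}$ satisfies an LDP with Gaussian-type rate $\mathbb I_{\hat V}(v)=|v|^2/2$, obtained from the asymptotic $(1-y^2 s_n/n)^{(n-k)/2}\sim e^{-y^2 s_n/2}$ in the relevant regime.

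Under Assumption A*, the independent sequences $(R_n)$ and $(V_{n,k})$ now obey LDPs at the common speed $n$, so Lemma~\ref{lem: ldp pairs of independent random objects} together with the contraction principle (Lemma~\ref{lem:contraction principle}) applied to the continuous product map $(r,v)\mapsto rv$ yields the LDP for $R_n V_{n,k}$ at speed $n$ with rate
\[
\mathbb I_k(x)=\inf_{r\ge 0,\,v\in\R^k,\,rv=x}\bigl[\mathbb J_X(r)+\mathbb I_V(v)\bigr];
\]
for $x\neq 0$, the rotational symmetry of $\mathbb I_V$ forces the optimal $v$ to lie parallel to $x$, and the substitution $|v|=c\in(0,1)$, $r=|x|/c$ reduces the right-hand side to the stated expression. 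Under Assumption B, one rewrites $R_n V_{n,k}=\hat R_n\,\hat V_{n,k}$ with $\hat R_n := \sqrt{s_n}\|X^{(n)}\|_2/n$; both independent factors now satisfy LDPs at the common slower speed $s_n$ (with rates $\mathbb J_X$ and $|v|^2/2$), and the identical pair-plus-contraction argument produces the second form of the rate function after the analogous $|v|=c>0$, $r=|x|/c$ substitution. The main technical obstacle is the careful verification of the two auxiliary LDPs for the sphere-coordinate vector---particularly the moderate-deviation version for $\hat V_{n,k}$, which requires matching upper and lower density estimates together with exponential tightness on the growing support of radius $\sqrt{n/s_n}$---while the remaining arguments are routine applications of the tools recalled in Section~2.
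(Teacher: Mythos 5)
Your proposal is correct and follows essentially the same route as the paper: the rotational-invariance reduction $A_{n,k}^T X^{(n)}\eqdistr \|X^{(n)}\|_2\,A_{n,k}(1,\cdot)$, the LDP for the angular (sphere-coordinate) factor, and then the independent-pair LDP plus contraction principle under the product map, followed by the substitution $c=\|x\|_2/z$. The only cosmetic difference is that you derive the angular LDP for $A_{n,k}(1,\cdot)$ and its rescaled version directly from the $\mathrm{Beta}(k/2,(n-k)/2)$ density via Laplace asymptotics, whereas the paper cites \cite[Theorem~3.4]{BGLR2010} for this step.
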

  
  Let us continue with two remarks of which one sheds some light upon the specific form of the LDP just presented and the other presents an interesting consequence of the previous theorem.
  
 \begin{rmk}  
The two parts in the rate functions essentially correspond to the radial component of the random sequence (which is what $\mathbb J_X$ represents) and the angular component of the random sequence (which is what $- \frac{1}{2}\log(1-c^2)$ and $\frac{c^2}{2} $ represent).  
 \end{rmk} 
  
\begin{rmk} 
  As a corollary to the previous theorem, the authors are also able to deduce LDPs for general $\ell_q$-(quasi)norms with $q\in(0,\infty)$ and refer to \cite[Corollary 2.8]{KLR2019}, which is in fact a direct consequence of the contraction principle under the continuous mapping $\R^k\ni x \mapsto \|x\|_q$.  
\end{rmk}

\begin{proof}[Idea of Proof of Theorem \ref{thm:klr constant regime} under Assumption $A^*$.]
In a first step, the authors show in \cite[Lemma 4.1]{KLR2019} that the top row of $A_{n,k}$, $k,n\in\N$ with $k\leq n$, satisfies the distributional identity
  \[
    A_{n,k}(1,\cdot) := \big(A_{n,k}(1,1),\ldots,A_{n,k}(1,k)\big) \eqdistr \frac{(Z_1,\ldots,Z_k)}{\| Z^{(n)} \|_2},
  \]
where $Z^{(n)}:= (Z_1,\ldots,Z_n)\in\R^n$ for a sequence $(Z_i)_{i\in\N}$ of iid standard Gaussians. This allows one to apply a result of F.~Barthe, F.~Gamboa, L.~V.~Lozada-Chang, and A.~Rouault \cite[Theorem 3.4]{BGLR2010}, which then shows that the random sequence $(A_{n,k}(1,\cdot))_{n\in\N}$ satisfies an LDP in $\R^k$ at speed $n$ with good rate function 
  \[
    J_k(y) := 
    \begin{cases}
    -\frac{1}{2}\log(1-\|y\|_2^2) & : \|y\|_2\leq 1 \cr
    \infty &: \text{otherwise}.
    \end{cases}
  \]
From the independence of $(X^{(n)})_{n\in\N}$ and $(A_{n,k})_{n\in\N}$, and the assumption that $(\|X^{(n)}\|_2/\sqrt{n})_{n\in\N}$ satisfies an LDP at speed $n$ with good rate function $\mathbb J_X$ (Assumption $A^*$), the authors then deduce from \cite[Lemma 1.9]{KLR2019} (which is a more general version of Lemma \ref{lem: ldp pairs of independent random objects} above, which under $A^*$ is actually sufficient) that the random sequence of pairs $(A_{n,k}(1,\cdot), \|X^{(n)}\|_2/\sqrt{n})_{n\in\N}$ satisfies an LDP at speed $n$ with good rate function $J_k(y) + \mathbb J_X(\alpha)$, $y\in\R^k$, $\alpha\in[0,\infty)$.

In a second step, one can now apply the contraction principle with the continuous mapping $\R^k\times [0,\infty)\ni (y,\alpha) \mapsto \alpha y \in\R^k$ to show that the random sequence $(A^T_{n,k}X^{(n)}/\sqrt{n})_{n\in\N}$ satisfies an LDP at speed $n$ with good rate function, which, for $x\in\R^k$, is given by
  \[
    \mathbb I_k(x) := \inf_{y\in\R^k,\, z\in\R} \Big\{ -\frac{1}{2}\log(1-\|y\|^2_2) + J_X(z) \,:\, x=yz,\, \|y\|_2\leq 1,\, z\geq 0 \Big\}.
  \]
 In a third and last step, it is only required to write this rate function in the appropriate form. Here one uses that without loss of generality the range of $z$ in the infimum can be changed to $z>0$, the substitution $y=x/z$, and the fact that $\|y\|_2\leq 1$ is equivalent to $\|x\|_2\leq z$, to show 
   \[
      \mathbb I_k(x) = \inf_{z\geq \|x\|_2} \Big[ -\frac{1}{2}\log\Big(1-\frac{\|x\|^2_2}{z^2}\Big) + J_X(z)\Big] = 
      \inf_{z> \|x\|_2} \Big[ -\frac{1}{2}\log\Big(1-\frac{\|x\|^2_2}{z^2}\Big) + J_X(z)\Big].
   \]
Writing the latter in terms of $c=\|x\|_2/z$, one obtains the form of the good rate function as in the statement of the theorem. 
\end{proof}

\subsection{Main results in the sub-linear regime}
 
In the sub-linear regime, when working with $(L_{k_n})_{n\in\N}$  in spaces of probability measures, one requires Assumption $A$ to obtain the desired LDP. When considering sequences of scaled Euclidean norms of the random projections, one requires Assumption $A^*$ and what is referred to as Assumption $C$, but we shall neither introduce this assumption nor present the corresponding result \cite[Theorem 2.11]{KLR2019} here. The following theorem corresponds to \cite[Theorem 2.9]{KLR2019} and we shall write $\mathcal M_1^q(\R)$ for the subset of probability measures $\nu$ on $\R$ having finite $q$-th moments, i.e., $\int_\R |x|^q\,\nu(\dint x)<\infty$. A sequence $(\nu_n)_{n\in\N}$ in $\mathcal M_1^q(\R)$ converges to a measure $\nu\in \mathcal M_1^q(\R)$ with respect to the $q$-Wasserstein topology if and only if the sequence converges weakly and the $q$-th moments converge as well; this topology can actually be metrized. For $\sigma>0$, we shall write $\mu_{2,\sigma}$ for the Gaussian measure on $\R$ with mean $0$ and variance $\sigma^2$.

\begin{thmalpha}
 Let $(k_n)_{n\in\N}\in\N^{\N}$ be such that $1\ll k_n\ll n$ and assume that Assumption $A$ is satisfied with associated speed $s_n$ and good rate function $\mathbb J_X$. In addition, suppose that $\mathbb J_X$ has a unique minimum at $m>0$. Then, for every $q\in[1,2)$, the following hold:
 \vskip 1mm
 (1)  If $s_n\gg k_n$, then $(L_{k_n})_{n\in\N}$ satisfies an LDP in the space $\mathcal M_1^q(\R)$ at speed $k_n$ with good rate function $\rate_{L,k_n}: \mathcal M_1^q(\R)\to [0,\infty]$ given by 
   \[
     \rate_{L,k_n}(\mu) := H(\mu|\mu_{2,m}).
   \]
 \vskip 1mm
 (2) If $s_n= k_n$, then $(L_{k_n})_{n\in\N}$ satisfies an LDP in the space $\mathcal M_1^q(\R)$ at speed $k_n$ with good rate function $\rate_{L,k_n}: \mathcal M_1^q(\R)\to [0,\infty]$ given by 
   \[
     \rate_{L,k_n}(\mu) := \inf_{c>0} \Big[ H(\mu|\mu_{2,c}) + \mathbb J_X(c) \Big].
   \]
 \vskip 1mm
 (3) If $s_n\ll k_n$, then $(L_{k_n})_{n\in\N}$ satisfies an LDP in the space $\mathcal M_1^q(\R)$ at speed $k_n$ with good rate function $\rate_{L,k_n}: \mathcal M_1^q(\R)\to [0,\infty]$ given by 
   \[
     \rate_{L,k_n}(\mu) := 
     \begin{cases}
       \mathbb J_X(c) &: \mu = \mu_{2,c} \cr
       \infty &: \text{otherwise}.
     \end{cases}
   \]
\end{thmalpha}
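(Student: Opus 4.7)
My plan is to prove all three parts through a common reduction — exploiting the rotational invariance of the Haar measure on the Stiefel manifold — followed by a case-by-case application of the contraction principle. Since $A_{n,k_n}$ is Haar-distributed and independent of $X^{(n)}$, conditioning on $\|X^{(n)}\|_2$ and applying rotational invariance yields the distributional identity
\[
A_{n,k_n}^T X^{(n)} \eqdistr \|X^{(n)}\|_2 \cdot A_{n,k_n}^T e_1 \eqdistr \frac{\|X^{(n)}\|_2}{\|Z^{(n)}\|_2}\,(Z_1,\ldots,Z_{k_n})^T,
\]
where the second equality uses the identification of the top row of a Haar-distributed Stiefel matrix (Lemma~4.1 of \cite{KLR2019}) and $(Z_i)_{i\in\N}$ are iid standard Gaussians independent of $X^{(n)}$. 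Setting $R_n := \|X^{(n)}\|_2/\|Z^{(n)}\|_2$ and $\nu_n := \frac{1}{k_n}\sum_{j=1}^{k_n}\delta_{Z_j}$, this becomes $L_{k_n} \eqdistr T_{R_n}\nu_n$, where $T_c$ denotes push-forward by the dilation $x\mapsto cx$.

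The two building-block LDPs are now at hand. A Sanov-type theorem, strengthened to the $q$-Wasserstein topology via the exponential tightness afforded by the Gaussian's finite exponential moments, yields an LDP for $(\nu_n)_{n\in\N}$ in $\mathcal M_1^q(\R)$ at speed $k_n$ with good rate function $\nu\mapsto H(\nu|\mu_{2,1})$. On the other hand, $\|Z^{(n)}\|_2/\sqrt{n}$ satisfies Cram\'er's LDP at speed $n$ and concentrates at $1$; since $n$ dominates both $s_n$ and $k_n$ in every regime of interest, $R_n$ is exponentially equivalent (at the relevant speed) to $\|X^{(n)}\|_2/\sqrt{n}$, which by Assumption~A obeys an LDP at speed $s_n$ with good rate function $\mathbb J_X$ uniquely minimized at $m$. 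The key computational identity underlying all three cases is $H(T_c\nu\,|\,\mu_{2,c}) = H(\nu\,|\,\mu_{2,1})$ for every $c>0$, which follows by a direct change of variables.

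The three claims are then obtained by contracting through the continuous map $\Phi:(c,\nu)\mapsto T_c\nu$ defined on $(0,\infty)\times\mathcal M_1^q(\R)$. In Case~(1), where $s_n\gg k_n$, the prefactor $R_n$ concentrates exponentially at $m$ at speed $s_n\gg k_n$, so $L_{k_n}$ is exponentially equivalent at speed $k_n$ to $T_m\nu_n$; contracting via $\nu\mapsto T_m\nu$ yields the rate $H(\mu|\mu_{2,m})$. In Case~(2), where $s_n=k_n$, independence of $R_n$ and $\nu_n$ gives a joint LDP at speed $k_n$ with rate $\mathbb J_X(c)+H(\nu|\mu_{2,1})$ (by the pairs-of-independent-objects principle of Lemma~\ref{lem: ldp pairs of independent random objects} in the appropriate generality), which contracts to $\inf_{c>0}\bigl[\mathbb J_X(c)+H(\mu|\mu_{2,c})\bigr]$. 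In Case~(3), where $s_n\ll k_n$, Sanov forces $\nu_n\to\mu_{2,1}$ exponentially at the faster speed $k_n$, so that on the natural scale $s_n$ governing $R_n$ only $\nu=\mu_{2,1}$ contributes; the image is necessarily of the form $\mu=T_c\mu_{2,1}=\mu_{2,c}$, and the contraction assigns it the rate $\mathbb J_X(c)$.

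I expect the main obstacle to lie in the passage to the $q$-Wasserstein topology. Upgrading Sanov's theorem from the weak to the $q$-Wasserstein topology demands exponential tightness of $(\nu_n)_{n\in\N}$ with respect to the $q$-th moment functional — a tractable but non-trivial Gaussian tail estimate exploiting that $q<2$ — and one must then verify that $\Phi(c,\nu)=T_c\nu$ is continuous on $(0,\infty)\times\mathcal M_1^q(\R)$, which ultimately rests on the fact that $x\mapsto cx$ is Lipschitz with a uniform control of the $q$-th moment of $T_c\nu$ on compact ranges of $c$. A secondary but delicate bookkeeping point is to patch together the various joint-LDP speeds in each regime, particularly to justify that in Case~(3) the effective speed driving the LDP for $L_{k_n}$ is $\min(s_n,k_n)=s_n$, which is why $\mathbb J_X$ rather than a relative entropy appears in the rate function.
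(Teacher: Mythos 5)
Your decomposition $L_{k_n}\eqdistr T_{R_n}\nu_n$ via rotational invariance and the Gaussian representation of a Haar Stiefel row, followed by a $q$-Wasserstein Sanov LDP for $\nu_n$ and contraction/exponential equivalence, is the same skeleton as the proof in \cite[Subsection~4.3]{KLR2019} (which the survey references but does not reproduce), and the identity $H(T_c\nu\,|\,\mu_{2,c})=H(\nu\,|\,\mu_{2,1})$ is exactly the right computational engine. Your concerns about exponential tightness of the $q$-th moment of $\nu_n$ and continuity of $(c,\nu)\mapsto T_c\nu$ in $(0,\infty)\times\mathcal M_1^q(\R)$ are well placed and manageable.

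There is one genuine gap: $R_n=\|X^{(n)}\|_2/\|Z^{(n)}\|_2$ and $\nu_n=\tfrac{1}{k_n}\sum_{j\le k_n}\delta_{Z_j}$ are \emph{not} independent, since $\|Z^{(n)}\|_2$ involves the very same Gaussians $Z_1,\ldots,Z_{k_n}$ that build $\nu_n$. Consequently the direct appeal to Lemma~\ref{lem: ldp pairs of independent random objects} in Case~(2), and the tacit treatment of the radial and angular randomness as decoupled in Cases~(1) and~(3), are not justified as written. This is precisely where the sub-linear hypothesis $k_n\ll n$ must do real work: replace $R_n$ by $\tilde R_n:=\|X^{(n)}\|_2/\sqrt n$ (which \emph{is} independent of $\nu_n$, since $X^{(n)}\perp Z^{(n)}$), and show that $T_{R_n}\nu_n$ and $T_{\tilde R_n}\nu_n$ are exponentially equivalent at the governing speed; this follows because $\|Z^{(n)}\|_2/\sqrt n\to 1$ with chi-squared (Cram\'er) tails at speed $n$, and $n\gg k_n\ge\min(s_n,k_n)$ in every sub-linear regime, together with the $q$-moment tightness of $\nu_n$ you already flagged. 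Separately, your Case~(3) reasoning correctly yields speed $\min(s_n,k_n)=s_n$; note this contradicts the speed $k_n$ printed in the survey's display---for $s_n\ll k_n$ one has $\Pro[L_{k_n}\approx\mu_{2,c}]\asymp e^{-s_n\mathbb J_X(c)}$, which is incompatible with the LDP upper bound at speed $k_n$ with rate $\mathbb J_X(c)>0$. This appears to be a typographical slip in the survey's transcription of \cite[Theorem~2.9]{KLR2019} rather than a fault in your argument.
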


We shall not present the idea of proof for this result and rather refer the reader to \cite[Subsection 4.3]{KLR2019}.

\begin{rmk}
Let us note that, as in Theorem \ref{thm:klr constant regime}, the rate functions that appear above can be decomposed into a radial component (represented by $\mathbb J_X$ as a consequence of Assumption $A$), and an angular component from $A_{n,k_n}$ (captured by the relative entropy term). Depending on the relative growth rate of $k_n$ and $s_n$, different parts dominate the rate function and both terms are present only when $s_n = k_n$.
\end{rmk}

\subsection{Main results in the linear regime}

In the linear regime, when working with $(L_{k_n})_{n\in\N}$ in spaces of probability measures or when considering sequences of scaled $\ell_q$-norms of the random projections, one requires Assumption $A$ to obtain the desired LDPs. 

Let us define the second-moment-mapping $\mathbb M_2:\mathcal M_1^2(\R)\to \R$, $\mathbb M_2(\nu) := \int_\R |x|^2\,\nu(\dint x)$. The following theorem corresponds \cite[Theorem 2.15]{KLR2019}. 

\begin{thmalpha}\label{thm:klr linear regime}
 Let $q\in[1,2)$, $(k_n)_{n\in\N}\in\N^{\N}$ and $\lambda\in(0,1]$ be such that $k_n\sim \lambda n$, and assume that Assumption $A$ is satisfied with associated speed $s_n$ and good rate function $\mathbb J_X$. Then $(L_{k_n})_{n\in\N}$ satisfies an LDP in the space $\mathcal M_1^q(\R)$ at speed $s_n$ and with the good rate functions $\rate_{L,\lambda}: \mathcal M_1^q(\R)\to [0,\infty]$ of the following form 
 \vskip 1mm
 (1)   If $s_n=n$, then 
   \begin{align*}
     \rate_{L,\lambda}(\mu) & :=  \inf_{c>\sqrt{\lambda \mathbb M_2(\mu)}} \Bigg[ \mathbb J_X(c) - \frac{1-\lambda}{2}\log\Big(1-\frac{\lambda \mathbb  M_2(\mu)}{c^2}\Big) + \lambda \log(c) \Bigg] \cr
     & - \lambda h(\nu) + \frac{\lambda}{2}\log(2\pi e) + \frac{1-\lambda}{2}\log(1-\lambda),
   \end{align*}
where $h(\nu) := -\int_\R \log\Big(\frac{\dint\nu}{\dint x}\Big) \,\dint\nu$ denotes the entropy of $\nu$ for those $\nu$ with density, while $h(\nu):=-\infty$ otherwise; we also use the convention $0\log0 :=0$.
  \vskip 1mm
 (2) If $s_n\ll n$, then
   \[
     \rate_{L,\lambda}(\mu): = 
     \begin{cases}
       \mathbb J_X(c) &: \mu = \mu_{2,c} \cr
       \infty &: \text{otherwise}.
     \end{cases}
   \]
\end{thmalpha}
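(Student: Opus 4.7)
The plan is to recast $L_{k_n}$ using a Gaussian representation of the Stiefel matrix, establish a joint LDP for a triple of independent ingredients at speed $n$, and recover $L_{k_n}$ by a single application of the contraction principle. This extends the strategy used for Theorem~\ref{thm:klr constant regime} to the linear regime, where the Sanov LDP for the Gaussian empirical measure now contributes at the same speed as the radial LDP coming from Assumption~$A$.

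First I would exploit the $O(n)$-invariance of Haar measure on $\mathbb V_{n,k_n}$ together with the identity $A_{n,k_n}(1,\cdot)\stackrel{d}{=}(Z_1,\ldots,Z_{k_n})/\|Z\|_2$ for iid standard Gaussians $Z_1,\ldots,Z_n$ (cf.\ \cite[Lemma~4.1]{KLR2019}) to obtain, with $X^{(n)}$ independent of $Z=(Z_1,\ldots,Z_n)$,
\[
L_{k_n}\stackrel{d}{=}T_{c_n/\rho_n}\#\tilde N_{k_n},\qquad c_n:=\|X^{(n)}\|_2/\sqrt{n},\ \rho_n:=\|Z\|_2/\sqrt{n},\ \tilde N_{k_n}:=\frac{1}{k_n}\sum_{j=1}^{k_n}\delta_{Z_j},
\]
with $T_\alpha(x):=\alpha x$. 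Splitting $\rho_n^2=(k_n/n)\mathbb M_2(\tilde N_{k_n})+(1-k_n/n)\tau_n$ for $\tau_n:=(n-k_n)^{-1}\sum_{j=k_n+1}^n Z_j^2$ isolates three stochastically independent ingredients $(c_n,\tilde N_{k_n},\tau_n)$. Assumption~$A^*$ supplies an LDP for $c_n$ at speed $n$ with rate $\mathbb J_X$; Sanov's theorem in the $q$-Wasserstein topology yields one for $\tilde N_{k_n}$ at speed $k_n\sim\lambda n$ with rate $H(\,\cdot\,|\mu_{2,1})$, hence rate $\lambda H(\,\cdot\,|\mu_{2,1})$ when reread at speed $n$; and Cram\'er's theorem applied to the $\chi^2$-average $\tau_n$ gives one at speed $(1-\lambda)n$, i.e.\ rate $\tfrac{1-\lambda}{2}(\tau-1-\log\tau)$ at speed $n$. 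Independence and the multivariate analogue of Lemma~\ref{lem: ldp pairs of independent random objects} deliver the joint LDP at speed $n$.

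Applying the contraction principle via the continuous map $\Phi(c,\nu,\tau):=T_{c/\sqrt{\lambda\mathbb M_2(\nu)+(1-\lambda)\tau}}\#\nu$ then transfers the joint LDP to $L_{k_n}$. To extract the closed form in part~(1), I would fix $\mu$ and $c$: the constraint $\Phi(c,\nu,\tau)=\mu$ forces $\nu=T_{r/c}\#\mu$ with $r^2=(1-\lambda)\tau\,c^2/(c^2-\lambda\mathbb M_2(\mu))$, which is admissible precisely when $c>\sqrt{\lambda\mathbb M_2(\mu)}$ and therefore produces exactly the range of the outer infimum in the statement. Substituting the standard Gaussian relative-entropy identity
\[
H(T_s\#\mu\,|\,\mu_{2,1})=-h(\mu)-\log s+s^2\mathbb M_2(\mu)/2+\tfrac{1}{2}\log(2\pi)
\]
and performing the explicit one-dimensional optimisation over $\tau>0$ (whose critical point is $\alpha:=(r/c)^2=1/c^2$) reproduces the claimed closed form. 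Case~(2) follows immediately: when $s_n\ll n$, the Sanov LDP for $\tilde N_{k_n}$ and the $\chi^2$-LDP for $\tau_n$ take place at speeds strictly faster than $s_n$, so both quantities are exponentially frozen at their unique zero-rate points $\mu_{2,1}$ and $1$ on the $s_n$-scale; consequently $\rho_n\to 1$ at speed $n\gg s_n$ and $L_{k_n}$ is exponentially equivalent at speed $s_n$ to $T_{c_n}\#\mu_{2,1}=\mu_{2,c_n}$, so Lemma~\ref{lem:exponential equivalence same ldp} applied to the $c_n$-LDP together with the contraction $c\mapsto\mu_{2,c}$ delivers the singular rate function.

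The principal technical hurdle will be upgrading Sanov's theorem for $\tilde N_{k_n}$ from the weak topology to the $q$-Wasserstein topology for $q\in[1,2)$, which requires exponential concentration of the empirical $q$-th moment of iid standard Gaussians at speed $n$, together with verifying the mild (semi)continuity of $\Phi$, $\mathbb M_2$ and $h$ needed both to apply the contraction principle and to handle the boundary $c=\sqrt{\lambda\mathbb M_2(\mu)}$ of the effective domain. Establishing exponential tightness of the triple $(c_n,\tilde N_{k_n},\tau_n)$ in the product topology at speed $n$ is the prerequisite that ensures the contraction yields a \emph{good} rate function.
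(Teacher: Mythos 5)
Your overall strategy—recasting $A_{n,k_n}^T X^{(n)}$ via the Gaussian representation of the Stiefel matrix, splitting $\rho_n^2$ into $(k_n/n)\,\mathbb M_2(\tilde N_{k_n})$ plus an independent $\chi^2$-average $\tau_n$ over the remaining coordinates, establishing a joint LDP at speed $n$ for the independent triple, and contracting through a rescaling map—is the right shape, and your explicit computations check out: the inner optimization over $s=r/c$ yields $s=1/c$, the Gaussian relative-entropy identity $H(T_s\#\mu\,|\,\mu_{2,1})=-h(\mu)-\log s+\tfrac{s^2}{2}\mathbb M_2(\mu)+\tfrac12\log(2\pi)$, the speed-conversion factors $\lambda$ and $1-\lambda$, and the $\chi^2_1$ Cram\'er rate $\tfrac12(\tau-1-\log\tau)$ reassemble exactly into the closed form of part (1), and the freezing argument for part (2) at speed $s_n\ll n$ is sound.

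There is, however, a genuine gap in the contraction step which you flag but characterize as ``mild (semi)continuity'': the second-moment functional $\mathbb M_2$ is \emph{not} continuous on $\mathcal M_1^q(\R)$ for $q\in[1,2)$—it is only lower semicontinuous—so the map $\Phi(c,\nu,\tau)=T_{c/\sqrt{\lambda\mathbb M_2(\nu)+(1-\lambda)\tau}}\#\nu$ is not continuous on the joint state space you set up, and Lemma~\ref{lem:contraction principle} does not apply as written. This is precisely the obstruction described in Section~\ref{Sec:strategy} after Theorem~\ref{theo:main}, where the naive contraction with $\mu\mapsto(\mu,\int|x|^p\,\mu(\mathrm dx))$ produces a demonstrably \emph{wrong} rate function. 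The repair is to carry $\sigma_n:=\mathbb M_2(\tilde N_{k_n})$ as a separate coordinate and first establish a \emph{joint} LDP for the pair $(\tilde N_{k_n},\sigma_n)$ at speed $k_n$, whose rate function is $H(\nu|\mu_{2,1})+\tfrac12(\sigma-\mathbb M_2(\nu))$ for $\sigma\ge\mathbb M_2(\nu)$ and $+\infty$ otherwise: the extra linear term accounts for the possibility that a $\nu$-negligible fraction of coordinates carries a macroscopic share of the empirical second moment, at exponential cost $\tfrac12$ per unit governed by the $e^{-t/2}$ tail of $Z^2$. One then contracts through the genuinely continuous map $(c,\nu,\sigma,\tau)\mapsto T_{c/\sqrt{\lambda\sigma+(1-\lambda)\tau}}\#\nu$. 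It happens that the resulting infimum over $\sigma$ is attained on the boundary $\sigma=\mathbb M_2(\nu)$—after the simplification $\lambda H(T_s\#\mu|\mu_{2,1})+\tfrac{\lambda}{2}(\sigma-s^2\mathbb M_2(\mu))=\lambda[-h(\mu)-\log s+\tfrac{\sigma}{2}+\tfrac12\log(2\pi)]$ one sees that the $\tau$-dependence is monotone and pushes $\tau$ to its constrained maximum—so your final formula is correct; but this cancellation is a nontrivial feature of the Gaussian ensemble that your proposal assumes rather than verifies, and without establishing the joint LDP for $(\tilde N_{k_n},\sigma_n)$ first, the contraction step has no foundation.
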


For the LDPs for sequences of $\ell_q$-norms of projected random vectors, we refer the reader to \cite[Theorem 2.16]{KLR2019}. Let us merely note here that for $q\in[1,2)$, the results follow directly from Theorem \ref{thm:klr linear regime} via the contraction principle and that the cases for $q\geq 2$ require a different approach (see \cite[Subsection 6.2]{KLR2019}).

\subsection{Main examples satisfying asymptotic thin-shell-type assumptions}

We shall now present some examples of sequences of random vectors $(X^{(n)})_{n\in\N}$ satisfying some of the assumptions presented in Subsection \ref{subsec:regimes and conditions}.

\subsubsection{The case of product measures}

Let us consider a sequence $(X_i)_{i\in\N}$ of iid $\R$-valued random variables and $X^{(n)}:=(X_1,\ldots,X_n)$, where we suppose that
  \[
    \Lambda_{X^2}(t) := \log \E\big[ e^{tX_1^2} \big] < \infty
  \]
for all $t$ in an open neighborhood around $0$. Then by Cram\'er's theorem (e.g., \cite[Theorem 2.2.1]{DZ2010}) the random sequence $(\|X^{(n)}\|_2^2/n)_{n\in\N}$ satisfies an LDP at speed $n$ with good rate function $\Lambda_{X^2}^*$. Applying the contraction principle with the continuous mapping $x\mapsto \sqrt{x}$, we immediately see that Assumption $A^*$ is satisfied with rate function 
  \[
    \mathbb J_X(x):= 
    \begin{cases}
    \Lambda_{X^2}^*(x^2) &: x\in [0,\infty) \cr
    \infty &: \text{otherwise}.
    \end{cases}
  \]  
The existence of a unique minimizer $m=\sqrt{\E[|X_1|^2]}$ of $\mathbb J_X$ follows from the strong law of large numbers, namely $n^{-1}\sum_{i=1}^nX_i^2 \stackrel{\text{a.s.}}{\longrightarrow} \E[X_1^2]=m^2$.

\subsubsection{The case of $\ell_p^n$-balls}

In what follows, we shall always denote by $X^{(n,p)}$, $n\in\N$, $p>0$, a random vector chosen uniformly from $n^{1/p}\B_p^n$. A key element in computations and proofs is again the Schechtmann--Zinn probabilistic representation (see \eqref{eq:schechtman-zinn representation}) with an additional factor $n^{1/p}$ because of the scaling of $\ell_p^n$-balls.

We start with $p\in[1,2)$. In this case, it already follows from Theorem \ref{thm:LDPp<q} (i.e., \cite[Theorem 1.3]{KPT2019_I}) that the random sequence $(\|X^{(n,p)}\|_2/\sqrt{n})_{n\in\N}$ satisfies an LDP at speed $n^{p/2}$ and with respective rate function, i.e., $(X^{(n,p)})_{n\in\N}$ satisfies Assumption $A$ with $s_n=n^{p/2}$ and unique minimizer 
  \[
    m_p=\sqrt{M_p(2)}=\Bigg( \frac{p^{2/p}}{3}\,\frac{\Gamma(1+\frac{3}{p})}{\Gamma(1+\frac{1}{p})} \Bigg)^{1/2};
  \]   
this means that Assumption $A^*$ is not satisfied. 

It is shown, however, in \cite[Proposition 3.3]{KLR2019} that Assumption $B$ is satisfied. More precisely, the authors prove the following: for $p\in[1,2)$, the random sequence $(X^{(n,p)})_{n\in\N}$ satisfies Assumption $B$ with speed $s_n=n^{\frac{2p}{2+p}}$ and the good rate function $\mathbb J_{X,p}:\R \to [0,\infty]$ given by
  \[
    \mathbb J_{X,p}(x) =
    \begin{cases}
    \frac{x^p}{p} &: x\geq 0 \cr 
    \infty &: x<0.
    \end{cases} 
  \] 
Let us note that a normalization with $1/p$ in $p^{-1}x^p$ is in a sense natural, since in that case we have Legendre-duality with $q^{-1}x^q$, where $q$ is the Hölder-conjugate to $p$. 

We refer to \cite[Subsection 3.2.2]{KLR2019}, specifically to \cite[Proposition 3.4]{KLR2019} and \cite[Theorem 3.5]{KLR2019}, for further examples in the case $p\in[1,2)$.

\vskip 2mm

We now continue with the case $p\in[2,\infty)$, which is the rather easy one. In \cite[Proposition 3.7]{KLR2019} the authors prove the following: for $p\in[2,\infty)$, the random sequence $(X^{(n,p)})_{n\in\N}$ satisfies Assumption $A^*$ with speed $s_n=n$ and good rate functions 
  \[
    \mathbb J_{X,2}(x) :=  
    \begin{cases}
      -\log(x) &: x\in(0,1] \cr
      \infty &: \text{otherwise}.
    \end{cases}
  \]
for $p=2$ and 
  \[
    \mathbb J_{X,p}(x) := 
     \begin{cases}
      \inf_{y\geq x}\Big[ \frac{1}{2}\log\Big(\frac{y^2}{x^2} \Big) + F^*_p(y) \Big] &: x\in(0,1) \cr
      \infty &: \text{otherwise},
    \end{cases}
  \]
for $p>2$, where 
  \[ 
    F^*_p(y) = \sup_{t_1,t_2\in\R} \Bigg[ t_1y + t_2 -\log\Bigg(\int_{\R} e^{t_1x^2+t_2|x|^p}f_p(x)\,\dint x \Bigg) \Bigg],\qquad y\in\R.
   \]
Recall that $f_p$ denotes the $p$-Gaussian density as introduced in \eqref{eq:pgaussian density}. The rate function $ \mathbb J_{X,p}$ in the LDP again has a unique minimizer at $m_p=\sqrt{M_p(2)}$. When $p=2$, then the LDP arises solely from the uniform distribution and  follows from \cite[Lemma 3.3]{GKR2017}. For $p>2$ it follows from the results of Z.~Kabluchko, J.~Prochno, and C.~Thäle \cite[Theorems 1.1(a) and 1.2]{KPT2019_I} taking into account the different scaling. 

There are a number of further results that can be obtained, for instance, for Euclidean norms of the random projections, and we refer the reader to \cite[Subsection 3.2.3]{KLR2019} directly, in particular to \cite[Remark 3.9]{KLR2019}.

\subsubsection{The case of Orlicz balls}

As a last example, we shall present the case of random vectors uniformly distributed on appropriately scaled Orlicz balls, where the defining Orlicz function $M$ is assumed to be super-quadratic, meaning that
  \[
    \frac{M(x)}{x^2} \stackrel{x\to\infty}{\longrightarrow} \infty.
  \]
In what follows, we allow $M:\R \to [0,\infty]$ and denote again by $\mathscr D_M= \{x\in\R \,:\, M(x)<\infty \}$ the effective domain of $M$. We recall that
  \[
    B_M^n(n) := \Bigg\{(x_i)_{i=1}^n \in\R^n\,:\, \sum_{i=1}^n M(x_i) \leq n \Bigg\}.
  \]
Further, let us use again the Orlicz-moment-mapping (which already appeared in the Sanov-type large deviations in Orlicz spaces)
  \[
    \mathbb M_M:\mathcal M_1(\R) \to [0,\infty), \quad \nu\mapsto \int_\R M(x)\,\nu(\dint x)
  \] 
for those probability measures $\nu$ for which the integral is finite. For $\beta>0$, we work again with the probability measures
$\mu_{M,\beta}\in\mathcal M_1(\R)$  given by
  \[
    \mu_{M,\beta}(\dint x) := \frac{e^{-\beta M(x)}}{\int_{\mathscr D_M}e^{-\beta M(y)}\, \lambda(\dint y)} \lambda(\dint x).
  \]
Moreover, we define the function 
  \[
    \mathcal J(u,v) := \sup_{s<0,\, t\in\R} \Bigg[ su + tv - \log\Bigg( \int_{\mathscr D_M} e^{sM(x)+tx^2} \,\dint x\Bigg) \Bigg],\quad u,v\in[0,\infty).
  \]     
In \cite[Proposition 3.11]{KLR2019}, the authors prove the following result: if $M$ is a super-quadratic Orlicz function and, for each $n\in\N$, $X^{(n)}$ is uniformly distributed in $B_M^n(n)$, then the random sequence $(X^{(n)})_{n\in\N}$ satisfies Assumption $A^*$ with rate function 
  \[
    \mathbb J_{X,M}(z) := \mathcal J(1,z^2) - \sup_{s<0} \Bigg[s -  \log\Bigg( \int_{\mathscr D_M} e^{sM(x)}\,\dint x\Bigg) \Bigg], \quad z\in[0,\infty),
  \]  
which is $\infty$ otherwise. Moreover, there exists a unique $\beta^*>0$ such that $\mathbb M_M(\mu_{M,\beta^*})=1$ and  $\mathbb J_{X,M}$ has unique minimizer at  $m:=\mathbb M_2(\mu_{M,\beta^*})$, with $\mathbb M_2$ being the second-moment-mapping.  

Let us note that the result for super-quadratic Orlicz functions can also be used to deduce the result we presented for $\ell_p^n$-balls when $p>2$. To show this one chooses $M(x)=|x|^p$.

\subsection*{Acknowledgement}
Joscha Prochno's research is supported by the German Research Foundation (DFG) under project 516672205 and by the Austrian Science Fund (FWF) under project P-32405.

\bibliographystyle{plain}
\bibliography{ldp_gfa}

\bigskip
\bigskip
	
	\medskip
	
	\small

	\noindent \textsc{Joscha Prochno:} Faculty of Computer Science and Mathematics, University of Passau, Dr.-Hans-Kapfinger-Str. 30, 94032 Passau, Germany.
	
	\noindent
	{\it E-mail:} \texttt{joscha.prochno@uni-passau.de}

\end{document}